\newtheorem{theorem}{Theorem}[section]
\newtheorem{lemma}[theorem]{Lemma}
\newtheorem{notation}[theorem]{Notation}
\newtheorem{conjecture}[theorem]{Conjecture}
\newtheorem{corollary}[theorem]{Corollary}%[section]
\newtheorem{remark}[theorem]{Remark}%[section]
\newtheorem{void}[theorem]{}
\newtheorem{proposition}[theorem]{Proposition}%[section]
\def\Gal{{\rm Gal}}
\def\Q{\mathbb{Q}}
\def\Z{\mathbb{Z}}
\def\ps@pprintTitle{%
	\let\@oddhead\@empty
	\let\@evenhead\@empty
	\def\@oddfoot{\reset@font\hfil\thepage\hfil}
	\let\@evenfoot\@oddfoot
}
\begin{document}

\begin{frontmatter}
	
\title{The Blockwise Navarro Alperin Weight Conjecture for Double Covers of Symmetric and Alternating Groups}
	
\author[label1]{Yucong Du}
\ead{duyc@cqu.edu.cn}
	
\author[label2]{Xin Huang}%\fnref{label3}}  %\corref{cor1}  correspondence auth.
\ead{xinhuang@mails.ccnu.edu.cn}

\author[label3]{Jiping Zhang}%\fnref{label3}}  %\corref{cor1}  correspondence auth.
\ead{jzhang@math.pku.edu.cn}

\address[label1]{College of Mathematics and Statistics, Chongqing University, Chongqing 401331, China}
\address[label2]{School of Mathematics and Statistics, Central China Normal University, Wuhan 430079, China}
\address[label3]{School of Mathematical Sciences, Peking University, Beijing 100871, China}

\begin{abstract}
We prove the blockwise Navarro Alperin weight conjecture for double covers of symmetric and alternating groups.
\end{abstract}

\begin{keyword}
	%% keywords here, in the form: keyword \sep keyword
Navarro Alperin weight conjecture\sep Symmetric groups\sep Alternating groups\sep Covering groups \sep Spin representations.
	%% MSC codes here, in the form: \MSC code \sep code
	%% or \MSC[2008] code \sep code (2000 is the default)
\end{keyword}

\end{frontmatter}

\section{Introduction}

There are two double covering groups of $S_n$, denoted by $\tilde{S}_n^+$ and $\tilde{S}_n^-$, such that
$$
\tilde{S}_n^\eta=\left\langle s_1,\ldots,s_{n-1}\mid s_i^2=\eta,(s_is_{i+1})^3=\eta,[s_i,s_j]=-1\mbox{ if }|i-j|\geq2\right\rangle,
$$
where $\eta\in\{\pm1\}$ and $-1$ represents the central element in $\tilde{S}_n^\eta$ of order $2$. Let $\varepsilon\colon\tilde{S}_n^\eta\to\{\pm1\}$ be the sign character (that is, $\varepsilon$ is a group homomorphism sending every $s_i$ to $-1$) and $\tilde{A}_n=\ker(\varepsilon)$ be the double cover of $A_n$. 

Let $p$ be a prime number.  Denote by $\mathbb{Q}_p$ the field of $p$-adic numbers and fix an algebraic closure $\bar{\mathbb{Q}}_p$ of $\mathbb{Q}_p$. Let $G$ be a finite group. We consider ordinary (resp. $p$-Brauer) characters of $G$ as functions from the set of elements (resp. $p$-regular elements) of $G$ to $\bar{\mathbb{Q}}_p$. Denote by $\mathrm{Irr}(G)$ (resp. $\mathrm{IBr}(G)$) the set of irreducible ordinary (resp. $p$-Brauer) characters of $G$. An irreducible ordinary (resp. $p$-Brauer) character of $\tilde{S}_n^\eta$ or $\tilde{A}_n$ is called a spin character (resp. $p$-Brauer character) if the element $-1$ is not in its kernel. The corresponding representation is called a spin representation.

Denote by $\mathcal{W}(G)$ the set of conjugacy classes of pairs $(R,\varphi)$, where $R$ is a $p$-subgroup of $G$ and $\varphi$ is a $p$-defect-zero character of $N_G(R)/R$, and $(R,\varphi)$ is called a $p$-weight of $G$. Note that if $N_G(R)/R$ has a $p$-defect-zero character, then $R$ is a $p$-radical subgroup of $G$, i.e., $O_p(N_G(R))=R$. The Galois group $\mathrm{Gal}(\bar{\mathbb{Q}}_p/\mathbb{Q}_p)$ acts on both $\mathrm{Irr}(G)$ and $\mathrm{IBr}(G)$, hence $\mathrm{Gal}(\bar{\mathbb{Q}}_p/\mathbb{Q}_p)$  also acts on $\mathcal{W}(G)$ by acting on the second component of a weight. It is also clear that the outer automorphism group $\mathrm{Out}(G)$ acts on $\mathrm{Irr}(G)$, $\mathrm{IBr}(G)$ and $\mathcal{W}(G)$. In \cite{Nav}, Navarro proposed the blockwise Navarro Alperin weight conjecture (which is also known as the blockwise Galois Alperin weight conjecture), stated as follows.

\begin{conjecture}[{\cite{Nav}}, {\cite{Tur}}]\label{conjecture:Galois-Alperin}
There is an $(\mathrm{Out}(G)\times\mathrm{Gal}(\bar{\mathbb{Q}}_p/\mathbb{Q}_p))$-equivariant bijection
$$
\Omega\colon\mathrm{IBr}(G)\to\mathcal{W}(G)
$$
such that $\mathrm{bl}(\phi)=\mathrm{bl}(\Omega(\phi))^G$ for any $\phi\in \mathrm{IBr}(G)$.
\end{conjecture}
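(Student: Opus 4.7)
The plan is to split along the prime $p$, and for odd $p$ along the spin/non-spin decomposition of both sides of the conjectured bijection. For $p=2$, the central involution $-1\in\tilde S_n^\eta$ is central of order $2$, so on any irreducible $2$-modular representation it must act as the scalar $+1$; hence every $\phi\in\IBr(\tilde S_n^\eta)$ factors through $S_n$, and similarly for $\tilde A_n$. Moreover every $2$-radical subgroup of $\tilde S_n^\eta$ contains $\langle-1\rangle$ (otherwise $R\langle-1\rangle\subseteq O_2(N_G(R))=R$ would be violated), so $N_G(R)/R\cong N_{S_n}(R/\langle-1\rangle)/(R/\langle-1\rangle)$. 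Thus both $\IBr$ and $\mathcal{W}$ of the double covers identify $(\mathrm{Out}\times\Gal)$-equivariantly with those of $S_n$ or $A_n$, and the $p=2$ case is immediate from the known validity of Conjecture~\ref{conjecture:Galois-Alperin} for the symmetric and alternating groups. For odd $p$, the element $-1$ is $p$-regular and central, so $\IBr(\tilde S_n^\eta)=\IBr^{\circ}\sqcup\IBr^{\mathrm{sp}}$ splits by the eigenvalue of $-1$; the same happens on the weight side, and both decompositions are preserved by $\mathrm{Out}\times\Gal(\bar\Q_p/\Q_p)$. The non-spin part again reduces to the linear case, and the real task is to construct an equivariant, block-preserving bijection between spin Brauer characters and spin weights of $\tilde S_n^\eta$, and then of $\tilde A_n$.

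Both sides admit a uniform combinatorial parametrization. Spin $p$-blocks of $\tilde S_n^\eta$ for odd $p$ are labelled by pairs $(\bar\lambda,w)$ with $\bar\lambda$ a $\bar p$-bar-core of $n-pw$, and the spin irreducible $p$-Brauer characters inside such a block are indexed by the $\bar p$-regular partitions of $w$. On the local side, the $p$-radical subgroups $R$ of $\tilde S_n^\eta$ are precisely the preimages under $\tilde S_n^\eta\to S_n$ of the iterated wreath-product radical subgroups of $S_n$, and $N_{\tilde S_n^\eta}(R)/R$ is a $\langle-1\rangle$-central extension of $N_{S_n}(\bar R)/\bar R$. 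Standard Clifford-theoretic analysis inside each spin block $(\bar\lambda,w)$ yields a set of spin weights again indexed by $\bar p$-regular partitions of $w$. Matching these two labellings blockwise defines the candidate $\Omega$; the condition $\mathrm{bl}(\phi)=\mathrm{bl}(\Omega(\phi))^{\tilde S_n^\eta}$ then follows from Brauer's first main theorem, since both labels share the common datum $(\bar\lambda,w)$ and the defect groups of such a block are Sylow $p$-subgroups of a copy of $\tilde S_{pw}^\eta$. The passage from $\tilde S_n^\eta$ to $\tilde A_n$ is handled by Clifford theory along the index-$2$ kernel of $\varepsilon$, exactly as in the linear case.

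The main obstacle is the $(\mathrm{Out}\times\Gal(\bar\Q_p/\Q_p))$-equivariance. Spin character values lie in quadratic extensions of $\Q_p$ involving $\sqrt{\pm p^*}$, and the associate pairs $\{\chi,\chi\otimes\varepsilon\}$ can be interchanged either by an outer automorphism or by a Galois element; whether this happens is governed by parity invariants such as $n-\ell(\bar\lambda)\bmod 2$ on the block, and by an analogous parity on the $\bar p$-regular partition labelling the character. To prove equivariance I would compute the Galois action on weight characters inductively along the radical tower, starting from the base case of a Sylow $p$-subgroup of $\tilde S_p^\eta$, and match it against the explicit Galois action on spin Brauer characters. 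The most delicate subcase is $\tilde A_n$ in a self-associate spin block of non-abelian defect, where a single spin character of $\tilde S_n^\eta$ splits on restriction to $\tilde A_n$ into a Galois-conjugate pair, and the same phenomenon must be reproduced on the weight side; I expect this parity bookkeeping, together with the exceptional case $n=6$ where $\mathrm{Out}(\tilde A_n)$ is larger, to be where the bulk of the technical effort lies.
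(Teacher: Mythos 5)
Your high-level plan coincides with the paper's: handle $p=2$ by the observation that $-1$ acts trivially both on $2$-modular representations and inside every $2$-radical subgroup, so everything descends to $S_n$ or $A_n$; for odd $p$ split off the spin part, parametrize spin Brauer characters and spin weights in a given block by a common combinatorial datum, build the bijection blockwise by matching labels, and then verify equivariance by an explicit computation of the Galois action on both sides. The paper does exactly this, so the skeleton is correct.

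However, two of your concrete claims are off in ways that matter. First, the assertion that the spin Brauer characters in a block of weight $w$ are indexed by ``the $\bar{p}$-regular partitions of $w$'' is not what happens: the paper uses the Brunat--Gramain basic set $\Lambda_\kappa$, i.e.\ bar partitions of $n$ with $p$-bar-core $\kappa$ and no part divisible by $p$, which under the $\bar{p}$-quotient bijection corresponds to $\bar{p}$-quotients with empty $0$-th runner (equivalently, maps $f\colon\bigcup_d\mathcal{C}_d\to\{p\text{-cores}\}$ as in Section~\ref{section8}), not to partitions of $w$. You would need that correct indexing to match with what the Michler--Olsson analysis of weights actually produces. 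Second, and more seriously, the claim that whether a Galois element interchanges an associate pair ``is governed by parity invariants such as $n-\ell(\bar\lambda)\bmod 2$'' is wrong. The parity $n-\ell(\lambda)\bmod 2$ controls only which pairs exist (self-associate vs.\ not); the action of the Frobenius-lift $\sigma$ on $\{\chi_\lambda,\varepsilon\chi_\lambda\}$ (or on $\{\chi_\lambda^{\prime+},\chi_\lambda^{\prime-}\}$) is governed by the quadratic-residue symbol $\mu_\lambda=\left(\tfrac{N_\lambda^\eta}{p}\right)$, where $N_\lambda^\eta$ is a product over the parts $n_j$ of $\lambda$ involving $(-1)^{\nu_p(n_j)}(n_j)_{p'}$; this is an honest arithmetic invariant mod $p$, not a parity. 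Establishing the factorization of $\mu_\lambda$ through the $p$-core and the weight (Theorem~\ref{symbol_of_IBr}), and reproducing the identical Legendre-symbol formula on the weight side from the explicit Pauli-matrix (Clifford-algebra) construction of the defect-zero representations of $\widetilde{N_{\mathbf{c}}\wr S_e}/R_{\mathbf{c}}^e$ (Sections~\ref{section6}--\ref{section7}), is where almost the entire content of the paper lives; this cannot be reduced to ``parity bookkeeping.'' Your proposal correctly identifies this as the crux but underestimates both its nature and its size.
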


\begin{remark}\label{remark:on Galois groups}
{\rm Let $t$ be a positive integer which is not divisible by $p$, and let $\omega\in \bar{\mathbb{Q}}_p$ be a primitive $t$-th root of unity.  The restriction of $\mathrm{Gal}(\bar{\mathbb{Q}}_p/\mathbb{Q}_p)$ to $\mathrm{Gal}(\mathbb{Q}(\omega)/\mathbb{Q})$ yields the subgroup of $\mathrm{Gal}(\mathbb{Q}(\omega)/\mathbb{Q})$ generated by the automorphism $\sigma$ of $\mathbb{Q}(\omega)$, sending $\omega$ to $\omega^p$. We denote the group $\langle \sigma\rangle$ by $\Gamma_t$. Note that the group $\Gamma_t$ is independent of the choice of a primitive root of unity $\omega$. If $t'$ is another positive integer which is a multiple of $t$, then the restriction map defines a surjective group homomorphism $\Gamma_{t'}\to \Gamma_t$. Hence the $\Omega$ in Conjecture \ref{conjecture:Galois-Alperin} is $\mathrm{Gal}(\bar{\mathbb{Q}}_p/\mathbb{Q}_p)$-equivariant if and only if $\Omega$ is $\Gamma_{|G|_{p'}}$-equivariant (where $|G|_{p'}$ denotes the $p'$-part of the integer $|G|$).}   
\end{remark}

The blockwise Navarro Alperin weight conjecture has been proved for $p$-solvable groups (see \cite{Tur}), but few results are known for simple groups. Let $n$ be a positive integer. In this paper we prove the following result.

\begin{theorem}\label{main}
If $p$ is an odd prime, the blockwise Navarro Alperin weight conjecture holds for spin blocks of $\tilde{S}_n^+$, $\tilde{S}_n^-$ and $\tilde{A}_n$.
\end{theorem}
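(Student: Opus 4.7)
The plan is to work block-by-block. For odd $p$, the spin $p$-blocks of $G=\tilde{S}_n^\eta$ are parameterized (by results of Humphreys and Cabanes) by pairs $(\bar{\kappa},w)$, where $\bar{\kappa}$ is a $\bar{p}$-core partition of some $n_0\leq n$ and $w=(n-n_0)/p$ is the $\bar{p}$-weight; the spin blocks of $\tilde{A}_n$ are obtained from these via Clifford correspondence. I would therefore fix a spin $p$-block $B$ of $G$, construct the bijection $\Omega_B$ there, and then descend to $\tilde{A}_n$.

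For the $\mathrm{IBr}$ side I would use the combinatorial labeling of spin $p$-Brauer characters in $B$ by restricted $p$-bar-regular partitions of $n$ with $\bar{p}$-core $\bar{\kappa}$ (Morris--Yaseen, Bessenrodt--Morris, Brundan--Kleshchev). For the weight side, work of Cabanes and Olsson describes the radical $p$-subgroups $R$ of $G$ together with the structure of $N_G(R)/R$ as a product of a ``core factor'' $\tilde{S}_{n_0}^\eta$ and a chain of covers of wreath products of symmetric groups; the defect-zero characters are then labeled by tuples of hook-free partitions attached to the $\bar{p}$-quotient data. Packaging these two sets of labels gives a canonical bijection $\Omega_B\colon\mathrm{IBr}(B)\to\mathcal{W}(B)$. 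The block-correspondence condition $\mathrm{bl}(\phi)=\mathrm{bl}(\Omega_B(\phi))^G$ is then immediate from the fact that both labels record the same central data $(\bar{\kappa},w)$.

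The main obstacle is verifying $\mathrm{Gal}(\bar{\mathbb{Q}}_p/\mathbb{Q}_p)$-equivariance. By Remark~\ref{remark:on Galois groups} it is enough to check invariance under the generator $\sigma\colon\omega\mapsto\omega^p$ of $\Gamma_{|G|_{p'}}$. The delicate point is that the values of spin characters lie in cyclotomic fields extended by quadratic irrationalities $\sqrt{\pm m}$ arising from self-associate characters, so $\sigma$ acts non-trivially in a way that mixes both a cyclotomic component and a $\pm 1$ sign component. I would compute the $\sigma$-action on each side explicitly: on $\mathrm{IBr}(B)$ it acts through a Mullineux-type involution on the underlying bar-partitions (equivalently, on the $\bar{p}$-quotient), while on $\mathcal{W}(B)$ it acts on the defect-zero characters of each wreath-product factor of $N_G(R)/R$ via the analogous combinatorial action. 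Matching these two actions is the technical heart of the argument and requires showing that the same field extensions, with the same $\sigma$-orbits, arise on both sides; this should follow from the structural parallel between the parameterizations, but needs careful bookkeeping of the extra $\sqrt{\pm m}$-contributions.

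Finally, for $\tilde{A}_n$ I would transport $\Omega_B$ along the index-$2$ inclusion $\tilde{A}_n\subset G$ using Clifford theory. Each spin Brauer character of $G$ in $B$ either remains irreducible on restriction or splits into a pair of associates, and the same dichotomy applies to weights through the analysis of $N_{\tilde{A}_n}(R)/R$ as an index-$2$ subgroup of $N_G(R)/R$. The sign character $\varepsilon$ acts in the same combinatorial way on labels on both sides, so $\Omega_B$ descends to spin blocks of $\tilde{A}_n$. For the outer automorphism groups, $\mathrm{Out}(\tilde{S}_n^\eta)$ is trivial for $n\geq 5$, while $\mathrm{Out}(\tilde{A}_n)$ is generated by conjugation by an element of $G\setminus\tilde{A}_n$ (i.e., the $\varepsilon$-twist), which is already built into the Clifford-theoretic construction; hence $\mathrm{Out}$-equivariance is automatic. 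Combining these ingredients completes the proof.
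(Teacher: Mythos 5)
Your high-level plan --- argue block-by-block, label both $\mathrm{IBr}(B)$ and the weights by bar-partition/quotient data, match the Galois action on each side, then descend to $\tilde A_n$ via Clifford theory --- is indeed the paper's overall strategy. But the part you flag as ``the technical heart'' is not just left as bookkeeping in the paper; it is where all the content lies, and your description of what happens there is wrong in a way that would derail the argument.

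First, the $\sigma$-action on $\mathrm{IBr}(B_\kappa)$ is \emph{not} a Mullineux-type involution permuting the underlying bar-partition labels. In both labelings ($p$-restricted $p$-strict partitions, or the basic set $\Lambda_\kappa$ of Brunat--Gramain used in the paper, i.e., strict partitions with no part divisible by $p$), $\sigma$ preserves each label $\lambda$ and the whole question is whether it fixes, or interchanges, the associate pair attached to $\lambda$ (the pair $\{\beta_\lambda^+,\beta_\lambda^-\}$ for $\tilde S_n^\eta$ in the non-self-associate case, or the pair $\{\beta_\lambda'^+,\beta_\lambda'^-\}$ on restriction to $\tilde A_n$ in the self-associate case). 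The answer is recorded by the Legendre symbol $\mu_\lambda=\bigl(\tfrac{N_\lambda^\eta}{p}\bigr)$, built from the parts of $\lambda$ as in Section~\ref{section3}. Your proposal has no mechanism to produce this sign, and describing the action as a permutation of the partitions points in the wrong direction.

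Second, even once the target formula is known, it is nontrivial to prove it for \emph{Brauer} characters, since those do not have explicit closed-form values the way spin ordinary characters do. The paper's route is: (i) compute $\sigma$ on the ordinary spin characters via the Schur Hauptdarstellung and the Humphreys product (Theorems~\ref{symbol_of_irr_+} and \ref{symbol_of_irr_-}), then (ii) transfer the action to $\mathrm{IBr}(B_\kappa)$ by an induction on $n$ together with Brauer's Permutation Lemma and Brauer's Second Main Theorem (Theorem~\ref{global}). This transfer argument is indispensable and is absent from your outline; without it, or some substitute, the Galois-equivariance claim is unsupported. The parallel Legendre-symbol bookkeeping on the weight side (Section~\ref{section7}) then shows $\mu_{\kappa,f}$ depends only on $\kappa$ and $w$, which is exactly what forces the two actions to match (Theorem~\ref{symbol_of_IBr} and Corollary~\ref{local}).

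Finally, the claim that $\mathrm{Out}(\tilde S_n^\eta)$ is trivial for all $n\ge5$ is false at $n=6$: the exceptional outer automorphism of $S_6$ lifts, and the paper settles $n=6$ by a separate check. Your outline would silently miss this case.
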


In \cite{DH}, Conjecture \ref{conjecture:Galois-Alperin} has been proved for symmetric groups and alternating groups. Moreover, since there is a natural bijection between irreducible $2$-Brauer characters and $2$-weights of $\tilde{S}_n^+$, $\tilde{S}_n^-$ or $\tilde{A}_n$ and that of $S_n$ or $A_n$, combining with Theorem~\ref{main}, we immediately get the following result.

\begin{theorem}
The blockwise Navarro Alperin weight conjecture holds for $\tilde{S}_n^+$, $\tilde{S}_n^-$ and $\tilde{A}_n$ and any prime $p$.
\end{theorem}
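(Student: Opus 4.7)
The plan is to reduce Theorem~1.3 to Theorem~\ref{main} together with the main result of \cite{DH}, by separating the spin and non-spin parts of $\IBr(G)$ and $\mathcal{W}(G)$ for $G\in\{\tilde{S}_n^+,\tilde{S}_n^-,\tilde{A}_n\}$ and identifying the non-spin parts with the corresponding sets for $G/\langle -1\rangle\in\{S_n,A_n\}$ via inflation.

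For odd $p$, the central subgroup $\langle -1\rangle$ is a $p'$-subgroup, so the idempotent $\tfrac{1}{2}(1+(-1))\in\bar{\Q}_p G$ is a sum of block idempotents and each $p$-block of $G$ is either spin or non-spin. The same dichotomy splits $\IBr(G)$ and $\mathcal{W}(G)$: non-spin Brauer characters are exactly those inflated from $\IBr(G/\langle -1\rangle)$, and non-spin weights $(R,\varphi)$ (those with $-1\in\ker\varphi$) are inflated from weights of $G/\langle -1\rangle$, using that $-1\notin R$ whenever $p$ is odd. Both correspondences are $(\mathrm{Out}(G)\times\Gamma_{|G|_{p'}})$-equivariant and compatible with block induction; hence \cite{DH} applied to $G/\langle -1\rangle$ covers the non-spin part, while Theorem~\ref{main} provides the bijection on the spin part.

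For $p=2$, the statement reduces entirely to the non-spin case. The central involution $-1$ must act trivially on every simple module over a field of characteristic $2$, so $\IBr(G)$ is wholly non-spin and is in canonical bijection with $\IBr(G/\langle -1\rangle)$. Dually, for $n\geq 5$ one has $O_2(G)=\langle -1\rangle$, so every $2$-radical subgroup of $G$ contains $\langle -1\rangle$, and the rule $(R,\varphi)\mapsto(R/\langle -1\rangle,\varphi)$ identifies $\mathcal{W}(G)$ with $\mathcal{W}(G/\langle -1\rangle)$. Both identifications are $(\mathrm{Out}(G)\times\Gamma_{|G|_{2'}})$-equivariant and block-preserving, so the conjecture for $G$ follows directly from \cite{DH}.

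The argument is largely formal once Theorem~\ref{main} is in hand, and the principal obstacle is bookkeeping. One must verify that inflation through $\langle -1\rangle$ commutes with block induction (standard for a central $p'$-subgroup when $p$ is odd, and for $\langle -1\rangle=O_2(G)$ when $p=2$), that $\mathrm{Out}(G)$ and $\Gamma_{|G|_{p'}}$ both fix $-1$ and hence respect the spin/non-spin split, and that the handful of small-$n$ cases in which $G/\langle -1\rangle$ degenerates are handled directly.
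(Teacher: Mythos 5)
Your argument is correct and takes essentially the same approach as the paper, which observes (without spelling out the details) that the non-spin parts reduce via inflation to $S_n$/$A_n$, covered by \cite{DH}, while Theorem~\ref{main} handles the spin blocks for $p$ odd and the $p=2$ case is entirely non-spin. One small simplification: to see that every $2$-radical subgroup $R$ of $G$ contains $\langle -1\rangle$, you do not need $O_2(G)=\langle -1\rangle$ (which fails, e.g., when $n=4$); it suffices that $\langle -1\rangle$ is a central $2$-subgroup of $G$, hence a normal $2$-subgroup of $N_G(R)$, hence contained in $O_2(N_G(R))=R$, which removes the need to treat small $n$ separately in the $p=2$ case.
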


This article is organized as follows: In Section \ref{Humphreys_product}, we provide the information of the twisted central product and the Humphreys product. In Section \ref{section3}, we determine the action of $\mathrm{Gal}(\bar{\mathbb{Q}}_p/\mathbb{Q}_p)$ on spin characters of $\tilde{S}_n^\eta$. In Section \ref{section4}, we provide a parametrization of irreducible $p$-Brauer characters of $\tilde{S}_n^\eta$ and $\tilde{A}_n$ in spin blocks, and the action of $\mathrm{Gal}(\bar{\mathbb{Q}}_p/\mathbb{Q}_p)$ is determined. In Section \ref{section5}, we present the radical subgroups of $\tilde{S}_n^\eta$. In Section \ref{section6}, we present the construction of $p$-defect-zero representations for each component of Humphreys products in the normalizers of $p$-radical subgroups. In Section \ref{section7}, we use the explicit construction to determine the action of $\mathrm{Gal}(\bar{\mathbb{Q}}_p/\mathbb{Q}_p)$ on weights of $\tilde{S}_n^\eta$ or $\tilde{A}_n$. In Section \ref{section8}, we complete the proof of Theorem~\ref{main}.

\section{The Humphreys Product}\label{Humphreys_product}

Throughout the rest of this paper we assume that $p$ is an odd prime. If $\tilde{H}_1,\tilde{H}_2,\ldots,\tilde{H}_m$ are subgroups of $\tilde{S}_n^\eta$ whose images in $S_n$ operate on disjoint sets and $\tilde{H}=\tilde{H}_1\tilde{H}_2\cdots\tilde{H}_m$, then we write
$$
\tilde{H}=\tilde{H}_1*\tilde{H}_2*\cdots*\tilde{H}_m
$$
and call $\tilde{H}$ the twisted central product of $\tilde{H}_1,\tilde{H}_2,\ldots,\tilde{H}_m$. Then for any $h_i\in\tilde{H}_i$ and $h_j\in\tilde{H}_j$ with $i\neq j$, we have
$$
[h_i,h_j]=\left\{\begin{array}{rl}
-1,&\mbox{if }\varepsilon(h_i)=\varepsilon(h_j)=-1,\\
1,&\mbox{else},
\end{array}\right.
$$
where $\varepsilon$ is the sign character of $\tilde{S}_n^\eta$. By restriction we view $\varepsilon$ as a sign character of each $\tilde{H}_i$.

Let $\chi_1,\chi_2,\ldots,\chi_r$ be self-associated irreducible spin ordinary (resp. $p$-Brauer) characters of $\tilde{H}_1,\tilde{H}_2,\ldots,\tilde{H}_r$ (i.e., $\varepsilon\chi_i=\chi_i$ for $1\leq i\leq r$) and let $\chi_{r+1},\chi_{r+2},\ldots,\chi_m$ be non-self-associated irreducible spin characters of $\tilde{H}_{r+1},\tilde{H}_{r+2},\ldots,\tilde{H}_m$ (i.e., $\varepsilon\chi_i\neq\chi_i$ for $r+1\leq i\leq m$). Here we abusively use the same notation $\varepsilon$ to denote both ordinary and $p$-Brauer sign characters. Set $s=m-r$. For $1\leq i\leq r$, assume that
$$
\mathrm{Res}_{\tilde{H}_i\cap\tilde{A}_n}^{\tilde{H}_i}\chi_i=\chi_i^++\chi_i^-
$$
for two irreducible ordinary (resp. $p$-Brauer) characters $\chi_i^+$ and $\chi_i^-$ of $\tilde{H}_i\cap\tilde{A}_n$. Denote $\delta_i=\chi_i^+-\chi_i^-$.

\begin{theorem}[Humphreys Product \cite{Hum}]\label{symbol_of_Humphreys_product}
		%	Let $\chi_j$ be an irreducible ordinary (resp. $p$-Brauer) character of $\tilde{H}_j$ for all $1\leq j\leq m$ with $p$ an odd prime, and 
Keep the notation above. We can construct an irreducible ordinary (resp. $p$-Brauer) character of $\tilde{H}$, denoted by
$$
\chi=\chi_1*\chi_2*\cdots*\chi_m\in\mathrm{Irr}(\tilde{H}_1*\tilde{H}_2*\cdots*\tilde{H}_m)
$$
(resp. $\chi=\chi_1*\chi_2*\cdots*\chi_m\in\mathrm{IBr}(\tilde{H}_1*\tilde{H}_2*\cdots*\tilde{H}_m))$, such that $\chi$ covers $\chi_i$ for each $1\leq i\leq m$, and $\chi$ and $\varepsilon\chi$ are the only irreducible characters satisfy this condition. This character $\chi$ is called the Humphreys product of $\chi_1,\chi_2,\ldots,\chi_m$. Let $t_i\in\tilde{H}_i\cap\tilde{A}_n$ if $1\leq i\leq r$ and $t_i\in\tilde{H}_i-\tilde{A}_n$ if $r+1\leq i\leq m$. The following hold.
\begin{enumerate}
\item If $s$ is odd, then $\varepsilon\chi\neq\chi$ and
$$
\chi(t_1t_2\cdots t_m)=(2\mathrm{i})^{\lfloor s/2\rfloor}\cdot\prod_{i=1}^r\delta_i(t_i)\cdot\prod_{i=r+1}^m\chi_i(t_i),
$$
where $\mathrm{i}$ is a primitive $4$-th root of unity in $\bar{\mathbb{Q}}_p$, and the notation $\lfloor s/2 \rfloor$ means the largest integer which is not greater than $s/2$.
\item If $s$ is even, then $\varepsilon\chi=\chi$ and
$$
\delta(t_1t_2\cdots t_m)=(2\mathrm{i})^{\lfloor s/2\rfloor}\cdot\prod_{i=1}^r\delta_i(t_i)\cdot\prod_{i=r+1}^m\chi_i(t_i),
$$
where $\delta=\chi^+-\chi^-$ and $\mathrm{Res}_{\tilde{H}\cap\tilde{A}_n}^{\tilde{H}}\chi=\chi^++\chi^-$.
\end{enumerate}
Moreover, the degree of $\chi$ is
$$
\chi(1)=2^{\lfloor s/2\rfloor}\cdot\prod_{i=1}^m\chi_i(1).
$$
So $\chi$ is a $p$-defect-zero irreducible character if and only if all $\chi_i$'s are $p$-defect-zero irreducible characters.
\end{theorem}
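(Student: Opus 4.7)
The plan is an induction on the number of factors $m$. The case $m = 1$ is vacuous, and by associativity of the twisted central product it suffices to handle $m = 2$: the general case then follows by applying the $m = 2$ construction to $\chi' = \chi_1 * \cdots * \chi_{m-1}$ and $\chi_m$. Propagation of the parity $s$ and the accumulated factor $(2\mathrm{i})^{\lfloor s/2 \rfloor}$ is straightforward provided one keeps careful track of how the self-association status of $\chi'$ is determined by the parity of the accumulated $s$ at each step.

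For $m = 2$, set $\tilde{H} = \tilde{H}_1 * \tilde{H}_2$, $K_i = \tilde{H}_i \cap \tilde{A}_n$, and $\tilde{K} = K_1 K_2$. Since the anti-commutation $[h_1, h_2] = -1$ occurs only when both $\varepsilon(h_i) = -1$, the subgroup $\tilde{K}$ is an ordinary central product (with the central $\pm 1$'s identified), $\tilde{K} \triangleleft \tilde{H}$, and $\tilde{H}/\tilde{K}$ is an elementary abelian $2$-group. The essential tool is Clifford theory for this normal subgroup. I would then split into three subcases according to $s \in \{0, 1, 2\}$:
\begin{itemize}
\item[(a)] $s = 0$: The $\tilde{H}/\tilde{K}$-orbit of $\psi = \chi_1^+ \boxtimes \chi_2^+$ has size $4$ (since $h_i \in \tilde{H}_i \setminus K_i$ swaps $\chi_i^+$ with $\chi_i^-$ but fixes the other factor), so $\chi = \mathrm{Ind}_{\tilde{K}}^{\tilde{H}} \psi$ is irreducible of degree $4 \psi(1) = \chi_1(1)\chi_2(1)$.
\item[(b)] $s = 1$: The orbit of the appropriate $\psi$ has size $2$; the stabilizer $L$ has index $2$ in $\tilde{H}$, and $\psi$ extends to $L$ (two extensions differing by $\varepsilon$, as $H^2(\mathbb{Z}/2, \bar{\mathbb{Q}}_p^\times) = 0$). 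Taking $\chi$ to be the induction of one such extension gives an irreducible of degree $2\psi(1) = \chi_1(1)\chi_2(1)$.
\item[(c)] $s = 2$: $\psi = \chi_1|_{K_1} \boxtimes \chi_2|_{K_2}$ is $\tilde{H}$-stable, but the twist $h_1 h_2 = -h_2 h_1$ forces the obstruction cocycle of the extension $\tilde{K} \triangleleft \tilde{H}$ to be the non-trivial class in $H^2((\mathbb{Z}/2)^2, \bar{\mathbb{Q}}_p^\times)$ (the quaternionic cocycle); consequently $\mathrm{Ind}_{\tilde{K}}^{\tilde{H}} \psi = 2\chi$ for a unique (up to $\varepsilon$-twist) irreducible $\chi$ of degree $2\psi(1) = 2\chi_1(1)\chi_2(1)$.
\end{itemize}

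In each subcase one must verify (i) that $\chi$ covers both $\chi_1, \chi_2$; (ii) that $\chi, \varepsilon\chi$ are the only irreducibles satisfying this; and (iii) the explicit character value at $t_1 t_2$. The main obstacle is step (iii) in subcase (c): evaluating $\chi$ at a diagonal element requires normalizing the intertwiners $M_1, M_2$ implementing conjugation by $h_1, h_2$ on $\psi$, and the non-trivial cocycle forces $M_1 M_2 = -M_2 M_1$ with $M_1^2$ and $M_2^2$ scalar. The factor $2\mathrm{i}$ (a primitive fourth root of unity, rather than merely $\pm\sqrt{2}$) emerges precisely from this quaternionic relation on the irreducible $2$-dimensional module of the algebra generated by $M_1, M_2$; the same $\mathrm{i}$ reappears in subcase (b) through a single anti-commuting pair after the induction step absorbs it into the character formula.

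The degree identity $\chi(1) = 2^{\lfloor s/2 \rfloor} \prod_i \chi_i(1)$ then falls immediately out of the three cases. For the $p$-defect-zero equivalence, since $p$ is odd the factor $2^{\lfloor s/2 \rfloor}$ is a $p'$-integer and the central identifications affecting $|\tilde{H}|$ involve only the $2$-element $-1$; hence $|\tilde{H}|_p = \prod_i |\tilde{H}_i|_p$, so $|\tilde{H}|_p \mid \chi(1)$ if and only if $|\tilde{H}_i|_p \mid \chi_i(1)$ for every $i$, yielding the stated equivalence.
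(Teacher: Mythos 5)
The paper does not actually prove Theorem~\ref{symbol_of_Humphreys_product}: it cites Humphreys' original construction \cite{Hum} and the statements in \cite[Lemma~4.2, Proposition~4.4]{HH}, and then merely illustrates the $m=s=2$ case with the explicit matrix
$
\left(\begin{smallmatrix}0&-1\\1&0\end{smallmatrix}\right)\otimes S(\gamma_1,\gamma_2),
$
whose eigenvalues $\pm\mathrm{i}$ produce the $2\mathrm{i}$. Your proposal --- induct on $m$, reduce to the two-factor case via associativity of the twisted central product, and run Clifford theory over $\tilde K=K_1K_2\trianglelefteq\tilde H$ with the three subcases distinguished by how many of the two factors are self-associated --- is precisely the approach in Humphreys' paper and behind Hirai--Hora's treatment, so it is the ``same route as the paper'' in the only meaningful sense available here. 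The structural points you identify are all correct: $\tilde H/\tilde K\cong(\mathbb Z/2)^2$; the orbit sizes $4$, $2$, $1$ in the three subcases give the degree $2^{\lfloor s/2\rfloor}\chi_1(1)\chi_2(1)$; in subcase (c) the invariant $\psi$ has the nontrivial obstruction class in $H^2((\mathbb Z/2)^2,\bar{\mathbb Q}_p^\times)\cong\mathbb Z/2$, so ${\rm Ind}_{\tilde K}^{\tilde H}\psi=2\chi$ with a unique $\chi$; and the $\pm\mathrm{i}$ eigenvalues of the anticommuting pair of intertwiners give $\delta(t_1t_2)=2\mathrm{i}\,\chi_1(t_1)\chi_2(t_2)$. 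The $p$-defect-zero assertion follows from $|\tilde H|_p=\prod_i|\tilde H_i|_p$ exactly as you say.

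Two small remarks for tightening. First, your items (i) covering and (ii) uniqueness up to the $\varepsilon$-twist are left as assertions; they come directly out of Clifford theory but you should at least note that in each subcase the constituents of ${\rm Ind}_{\tilde K}^{\tilde H}\psi$ are exactly the characters lying over all of $\chi_1,\chi_2$, and that there are either one or two of them. (Uniqueness up to $\varepsilon$ is also what makes the induction-by-associativity well-posed: $(\chi_1*\chi_2)*\chi_3$ and $\chi_1*(\chi_2*\chi_3)$ both satisfy the same covering characterization.) Second, the clause ``the same $\mathrm{i}$ reappears in subcase~(b)'' reads as if a new $\mathrm{i}$ arises there; it does not --- in the $m=2$, $s=1$ case one finds $\chi(t_1t_2)=\delta_1(t_1)\chi_2(t_2)$ with no $\mathrm{i}$, consistent with $\lfloor 1/2\rfloor=0$. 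Any $\mathrm{i}$'s visible in subcase~(b) when $m>2$ are inherited from earlier subcase~(c) steps through $\delta'$; phrasing it that way would avoid confusion.
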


This result can also be found in \cite[Lemma~4.2, Proposition~4.4]{HH}. We point out that the irreducible $p$-Brauer characters have the same properties according to Humphreys' construction. For example, let $m=s=2$ and $P_1,P_2$ be irreducible representations corresponding to irreducible $p$-Brauer characters $\chi_1,\chi_2$, respectively, and set $S(\gamma_1,\gamma_2)=P_1(\gamma_1)\otimes P_2(\gamma_2)$ for $\gamma_i\in\tilde{H}_i$. Then according to the proof of \cite[Theorem~2.4]{Hum}, the representation of $\mathrm{Res}_{\tilde{H}\cap\tilde{A}_n}^{\tilde{H}}(\chi_1*\chi_2)$ is defined by
$$
R(\gamma_1,\gamma_2)=\left\{\begin{array}{rl}
\left(\begin{array}{rr}
1&0\\
0&1
\end{array}\right)\otimes S(\gamma_1,\gamma_2),&\mbox{if }\varepsilon(\gamma_1)=\varepsilon(\gamma_2)=1,\\
\left(\begin{array}{rr}
0&-1\\
1&0
\end{array}\right)\otimes S(\gamma_1,\gamma_2),&\mbox{if }\varepsilon(\gamma_1)=\varepsilon(\gamma_2)=-1.
\end{array}\right.
$$
So we have $\delta(\gamma_1\gamma_2)=2\mathrm{i}\cdot\chi_1(\gamma_1)\chi_2(\gamma_2)$ if $\varepsilon(\gamma_1)=\varepsilon(\gamma_2)=-1$.

\section{The Irreducible Characters of Double Covers of Symmetric Groups}\label{section3}

%	Let $\sigma$ be the field automorphism of $\bar{\mathbb{Q}}_p$ such that $\sigma$ fixes roots of unity of $p$-power and $\sigma(\omega)=\omega^p$ for any $p'$-th root of unity $\omega\in\bar{\mathbb{Q}}_p$.

Let $n_0$ be an integer such that for every finite group $G$ in the rest of this paper, $n_0$ is divisible by $|G|$. Assume further that $4~|~n_0$. Let $\omega$ be a primitive $n_0$-th root of unity in $\bar{\mathbb{Q}}_p$. We can uniquely write $\omega=\omega_1\omega_2$ such that $\omega_1$ is a $p'$-th root of unity and $\omega_2$ is a $p$-power-th root of unity.  Let $\sigma\in \Gal(\Q(\omega)/\Q)$ be the Galois automorphism sending $\omega_1$ to $\omega_1^p$ and fixes $\omega_2$. Then the restriction yields a surjective homomorphism $\langle\sigma\rangle\to\Gamma_{|G|_{p'}}$ (see Remark \ref{remark:on Galois groups} for the notation $\Gamma_{|G|_{p'}}$). Hence the map $\Omega$ for the group $G$ in Conjecture \ref{conjecture:Galois-Alperin} is $\Gamma_{|G|_{p'}}$-equivariant if and only if $\Omega$ is $\langle \sigma\rangle$-equivariant.

%	Let $q$ be an integer which is not divisible by $p$, and let $\omega$ be any primitive $m$-th root of unity such that $p\nmid m$ and that $\{\mathrm{i},\sqrt{p},\sqrt{q}\}\subseteq\mathbb{Q}(\omega)$. Let $\sigma$ be the automorphism of $\mathbb{Q}(\omega)$ sending $\omega$ to $\omega^p$.

\begin{proposition}
We have $\sigma(\sqrt{q})=\left(\frac{q}{p}\right)\sqrt{q}$ for any integer $q$ not divisable by $p$ satisfying $\sqrt{q}\in\mathbb{Q}(\omega)$ and $\sigma(\sqrt{-p})=\sqrt{-p}$. Here $\left(\frac{\cdot}{p}\right)$ is the Legendre symbol. 
\end{proposition}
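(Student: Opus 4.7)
The plan is to express every $\sqrt{q}$ in terms of standard Gauss sums and low-order roots of unity, then to read off the action of $\sigma$ from its defining action on the $p'$-part and $p$-part of $\omega$. Replacing $q$ by its square-free kernel changes neither the hypothesis $\sqrt{q}\in\mathbb{Q}(\omega)$, nor the ratio $\varepsilon(q):=\sigma(\sqrt{q})/\sqrt{q}\in\{\pm 1\}$, nor the Legendre symbol, so I may assume $q$ is square-free. Working inside a sufficiently large $\mathbb{Q}(\zeta_M)\supseteq\mathbb{Q}(\omega)$ on which $\sigma$ extends in the obvious way (sending the $p'$-component of $\zeta_M$ to its $p$-th power and fixing the $p$-component), the factorisation $q=\pm 2^a\prod_i\ell_i$ with distinct odd primes $\ell_i\ne p$ yields $\sqrt{q}=\sqrt{-1}^{\,\epsilon}\sqrt{2}^{\,a}\prod_i\sqrt{\ell_i}$ up to sign, and multiplicativity of both $\varepsilon$ and the Legendre symbol reduces the first claim to the three cases $q=-1$, $q=2$, $q=\ell$.

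For $q=-1$, the element $\sqrt{-1}$ is a primitive $4$-th root of unity, hence a $p'$-root since $p$ is odd, and $\sigma(\sqrt{-1})=(\sqrt{-1})^p$ equals $\sqrt{-1}$ or $-\sqrt{-1}$ according to $p\bmod 4$, matching $\left(\frac{-1}{p}\right)$. For $q=2$, use $\sqrt{2}=\zeta_8+\zeta_8^{-1}$ with $\zeta_8$ a $p'$-root, so $\sigma(\sqrt{2})=\zeta_8^p+\zeta_8^{-p}$, and a case distinction on $p\bmod 8$ produces $\pm\sqrt{2}$ in agreement with $\left(\frac{2}{p}\right)$.

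For $q=\ell$ an odd prime different from $p$, introduce the quadratic Gauss sum $g_\ell=\sum_{a=1}^{\ell-1}\left(\frac{a}{\ell}\right)\zeta_\ell^a$, which satisfies $g_\ell^2=\ell^*$ with $\ell^*=(-1)^{(\ell-1)/2}\ell$. Since $\ell\ne p$, $\zeta_\ell$ is a $p'$-root, so $\sigma(\zeta_\ell)=\zeta_\ell^p$; reindexing $a\mapsto p^{-1}a\pmod\ell$ gives $\sigma(g_\ell)=\left(\frac{p}{\ell}\right) g_\ell$, whence $\varepsilon(\ell^*)=\left(\frac{p}{\ell}\right)$. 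Quadratic reciprocity rewrites this as $\left(\frac{\ell^*}{p}\right)$, and combining with the $q=-1$ case by multiplicativity gives $\varepsilon(\ell)=\left(\frac{\ell}{p}\right)$.

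Finally, for $\sqrt{-p}$, choose a primitive $p$-th root of unity $\zeta_p$: it is a $p$-power-th root of unity and is therefore fixed by $\sigma$ by the very definition of $\sigma$, so the Gauss sum $g_p$ is $\sigma$-fixed and $\sigma(\sqrt{p^*})=\sqrt{p^*}$. When $p\equiv 3\pmod 4$ one has $p^*=-p$ and the claim is immediate; when $p\equiv 1\pmod 4$ one has $p^*=p$ and $\sqrt{-p}=\sqrt{-1}\cdot\sqrt{p}$, but $\sigma(\sqrt{-1})=\sqrt{-1}$ in this case as well, so $\sigma(\sqrt{-p})=\sqrt{-p}$. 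The only non-formal ingredient is quadratic reciprocity, used to convert the sign $\left(\frac{p}{\ell}\right)$ that arises naturally from the Gauss sum into the $\left(\frac{\ell^*}{p}\right)$ demanded by the statement; this is the main substantive step, while the rest is a direct verification on generators.
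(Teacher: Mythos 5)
Your proof is correct, but it takes a genuinely different route from the paper for the first claim. The paper fixes a prime above $p$ in $\mathbb{Z}[\omega]$, reduces modulo it, observes that $\sigma$ induces the Frobenius $x\mapsto x^p$ on the residue field $\mathbb{F}_p(\bar\omega)$, and then reads off the sign from whether $\pi(\sqrt{q})$ lies in the Frobenius fixed field $\mathbb{F}_p$ — i.e.\ from whether $q$ is a quadratic residue mod $p$. This is a single uniform argument for all admissible $q$ and, notably, never invokes quadratic reciprocity; the Legendre symbol falls out directly from the Frobenius. You instead reduce to the square-free case, factor into primes, settle $q=-1,2$ by explicit manipulation of $4$th and $8$th roots of unity, handle each odd prime $\ell\ne p$ via the quadratic Gauss sum $g_\ell$ (getting $\sigma(g_\ell)=\bigl(\tfrac{p}{\ell}\bigr)g_\ell$ by reindexing), and then use quadratic reciprocity to flip $\bigl(\tfrac{p}{\ell}\bigr)$ into $\bigl(\tfrac{\ell^*}{p}\bigr)$. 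This is the classical cyclotomic approach and is perfectly valid — the multiplicativity of $\varepsilon(q)=\sigma(\sqrt q)/\sqrt q$ is sound because the sign ambiguities cancel — but it is longer, requires extending $\sigma$ to a larger cyclotomic field $\mathbb{Q}(\zeta_M)$, and imports quadratic reciprocity as an external ingredient that the paper's argument does not need. For the second claim about $\sqrt{-p}$, both proofs are essentially the same: the Gauss sum over $\zeta_p$ is fixed by $\sigma$ because $\zeta_p$ is a $p$-power root of unity, and the case $p\equiv 1\pmod 4$ is finished by combining with $\sigma(\sqrt{-1})=\sqrt{-1}$.
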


\begin{proof}
\begin{enumerate}
\item Since $\sqrt{q}$ is an algebraic integer, we have $\sqrt{q}\in\mathbb{Z}[\omega]$. Let $\nu_p$ be the $p$-valuation of $\mathbb{Z}[\omega]$ and $\pi\colon\mathbb{Z}[\omega]\to\mathbb{F}_p(\bar{\omega})$ be the canonical homomorphism to its residue field such that $\bar{\omega}=\pi(\omega)$. Let $\bar{\sigma}$ be the field automorphism of $\mathbb{F}_p(\bar{\omega})$ defined by $\sigma$, that is, $\bar{\sigma}\circ\pi=\pi\circ\sigma$. Then $\bar{\sigma}(\bar{\omega})=\bar{\omega}^p$ is the Frobenius automorphism of $\mathbb{F}_p(\bar{\omega})$. 
\begin{enumerate}
\item If $q$ is a quadratic residue modulo $p$, then $\pi(\sqrt{q})\in\mathbb{F}_p$ and hence $\bar{\sigma}(\pi(\sqrt{q}))=\pi(\sqrt{q})$;
\item If $q$ is not a quadratic residue modulo $p$, then $\pi(\sqrt{q})\notin\mathbb{F}_p$ and hence $\bar{\sigma}(\pi(\sqrt{q}))\neq\pi(\sqrt{q})$, so we have $\bar{\sigma}(\pi(\sqrt{q}))=-\pi(\sqrt{q})$ since $\bar{\sigma}(\pi(\sqrt{q}))^2=\pi(q)=\pi(\sqrt{q})^2$.
\end{enumerate}
As a conclusion, we have $\pi(\sigma(\sqrt{q}))=\bar{\sigma}(\pi(\sqrt{q}))=\left(\frac{q}{p}\right)\pi(\sqrt{q})$. Since $\sigma(\sqrt{q})^2=q=(\sqrt{q})^2$, we know that $\sigma(\sqrt{q})\in\{\pm\sqrt{q}\}$, so $\sigma(\sqrt{q})=\left(\frac{q}{p}\right)\sqrt{q}$.
\item Let $\omega_p$ be a $p$-th primitive root of unity in $\mathbb{Q}(\omega)$. Then $\sigma(\omega_p)=\omega_p$. 
\begin{enumerate}
\item If $p\equiv1\mod4$, then we have
$$
\sqrt{p}=\sum_{t=1}^{p-1}\left(\frac{t}{p}\right)\omega_p^t;
$$
see the proof of \cite[Proposition 4.1]{BN}. Hence $\sigma(\sqrt{p})=\sqrt{p}$. Moreover, since $\sigma(\sqrt{-1})=\left(\frac{-1}{p}\right)\sqrt{-1}=\sqrt{-1}$, we have $\sigma(\sqrt{-p})=\sqrt{-p}$.
\item If $p\equiv3\mod4$, then we have
$$
\mathrm{i}\sqrt{p}=\sqrt{-p}=\sum_{t=1}^{p-1}\left(\frac{t}{p}\right)\omega_p^t;
$$
see the proof of \cite[Proposition 4.1]{BN}. Hence $\sigma(\sqrt{-p})=\sqrt{-p}$.
\end{enumerate}
\end{enumerate}
\end{proof}

\begin{remark}\label{remark:action of sigma}
{\rm	Note that for any integer $m\neq0$, we have a decomposition
$$
m=(-p)^{\nu_p(m)}\cdot(-1)^{\nu_p(m)}m_{p'}, 
$$
where $\nu_p$ is the $p$-valuation and $m_{p'}$ is the $p'$-part of $m$. Taking $q=(-1)^{\nu_p(m)}m_{p'}$ in the above proposition, we obtain
$$
\sigma(\sqrt{m})=\left(\frac{(-1)^{\nu_p(m)}m_{p'}}{p}\right)\sqrt{m}.
$$
}
\end{remark}

\begin{void}	
{\rm \textbf{Bar partitions.}	The irreducible spin characters of $\tilde{S}_n^\eta$ are originally determined in \cite{Sch}, which are parametrized by strict partitions (which are exactly the bar partitions in \cite{Ols93}) of $n$; see \cite[Section~7]{Ols93}. We restate these results. If $\lambda=(n_1,n_2,\ldots,n_l)$ is a partition of $n$, then we write $|\lambda|=n$, and call it a bar partition if $n_1>n_2>\cdots>n_l>0$.
\begin{enumerate}
\item If $n-l$ is odd, then $\lambda$ defines two non-self-associated characters $\chi_\lambda$ and $\varepsilon\chi_\lambda$ of $\tilde{S}_n^\eta$, and in this case we write $\lambda\in\mathcal{P}_{n,\textrm{odd}}^{\textrm{str}}$ and call $\lambda$ non-self-associated;
\item If $n-l$ is even, then $\lambda$ defines one self-associated character $\chi_\lambda$ of $\tilde{S}_n^\eta$, and in this case we write $\lambda\in\mathcal{P}_{n,\textrm{even}}^{\textrm{str}}$ and call $\lambda$ self-associated.
\end{enumerate}
Note that we have
$$
n-l\equiv\#\{\mbox{even numbers in }\lambda\}\mod2.
$$
Set
$$
n'_j=\left\{\begin{array}{ll}
(-1)^{\nu_p(n_j)}\cdot(n_j)_{p'},&\mbox{if }n_j\mbox{ is odd},\\
(-1)^{\nu_p(n_j)}\cdot(n_j)_{p'}/(2\eta),&\mbox{if }n_j\mbox{ is even}
\end{array}\right.
$$
for any $1\leq j\leq l$ and define
$$
N_\lambda^\eta=(-1)^{\lfloor(n-l)/2\rfloor}\prod_{j=1}^ln'_j.
$$
}
\end{void}

\begin{void}
{\rm		\textbf{Irreducible Characters of $\tilde{S}_n^+$.}	Let $\eta=+1$ and recall that
$$
\tilde{S}_n^+=\left\langle s_1,\ldots,s_{n-1}\mid s_i^2=1,(s_is_{i+1})^3=1,[s_i,s_j]=-1\mbox{ if }|i-j|\geq2\right\rangle.
$$
The construction of irreducible ordinary characters of $\tilde{S}_n^+$ can be found in \cite[Section~8]{HH}. First we can define the Schur Hauptdarstellung of $\tilde{S}_n^+$, which is an irreducible ordinary representation of $\tilde{S}_n^+$ whose degree is a power of $2$, denoted by $\Delta'_n$. Let $\chi_{\Delta'_n}$ be the irreducible character of $\Delta'_n$. It is non-self-associated if $n$ is even, and self-associated if $n$ is odd. For an odd number $n$, if
$$
\mathrm{Res}_{\tilde{A}_n}^{\tilde{S}_n^+}\chi_{\Delta'_n}=\chi'^+_{\Delta'_n}+\chi'^-_{\Delta'_n},
$$
we set $\delta_{\Delta'_n}=\chi'^+_{\Delta'_n}-\chi'^-_{\Delta'_n}$.
\begin{enumerate}
\item For any $s\in\tilde{A}_n$, by \cite[Theorem~8.4~(i)]{HH}, we have $\chi_{\Delta'_n}(s)\in\mathbb{Z}$ (more specifically $\chi_{\Delta'_n}(s)\in\{0,\pm2^m\mid m\in\mathbb{Z}\}$). Hence $\sigma$ fixes ${\rm Res}_{\tilde{A}_n}^{\tilde{S}_n}\chi_{\Delta'_n}$. Then by the Gallagher correspondence (see e.g. \cite[Corollary 1.23]{Navbook}), we have $\sigma(\{\chi_{\Delta'_n},\varepsilon\chi_{\Delta'_n}\})=\{\chi_{\Delta'_n},\varepsilon\chi_{\Delta'_n}\}$.
\item If $n$ is even and let $s=s_1s_2\cdots s_{n-1}\in\tilde{S}_n^+-\tilde{A}_n$, then by \cite[Theorem~8.4~(ii)]{HH} we have
$$
\chi_{\Delta'_n}(s)=\mathrm{i}^{n/2-1}\sqrt{n/2}.
$$
So by Remark \ref{remark:action of sigma},
\begin{equation}\label{symbol_nsa}
\sigma(\chi_{\Delta'_n}(s))=\left(\frac{-1}{p}\right)^{n/2-1}\left(\frac{(-1)^{\nu_p(n)}n_{p'}/2}{p}\right)\cdot\chi_{\Delta'_n}(s).
\end{equation}
\item If $n$ is odd and let $s=s_1s_2\cdots s_{n-1}\in\tilde{A}_n$, then by \cite[Theorem~8.4~(iii)]{HH} we have
$$
\delta_{\Delta'_n}(s)=\mathrm{i}^{(n-1)/2}\sqrt{n},
$$
Again by Remark \ref{remark:action of sigma}, we have
\begin{equation}\label{symbol_sa}
\sigma(\delta_{\Delta'_n}(s))=\left(\frac{-1}{p}\right)^{(n-1)/2}\left(\frac{(-1)^{\nu_p(n)}n_{p'}}{p}\right)\cdot\delta_{\Delta'_n}(s).
\end{equation}
\end{enumerate}

According to \cite[Page~1221]{HH}, for a bar partition $\lambda=(n_1>n_2>\cdots>n_l>0)$, we can define an irreducible character $\chi_\lambda$ as follows: Set
$$
\tilde{S}_\lambda^+=\tilde{S}_{n_1}^+*\tilde{S}_{n_2}^+*\cdots*\tilde{S}_{n_l}^+
$$
and let
$$
\tilde{\chi}_\lambda=\mathrm{Ind}_{\tilde{S}_\lambda^+}^{\tilde{S}_n^+}\left(\chi_{\Delta'_{n_1}}*\chi_{\Delta'_{n_2}}*\cdots*\chi_{\Delta'_{n_l}}\right).
$$
Then the irreducible character $\chi_\lambda\in\mathrm{Irr}(\tilde{S}_n^+)$ is defined such that $\chi_{(n)}=\chi_{\Delta'_n}$ and
\begin{equation}\label{decomposition_number}
\tilde{\chi}_\lambda=\chi_\lambda+\sum_{\lambda'<\lambda,\lambda'\in\mathcal{P}_{n,\textrm{even}}^{\textrm{str}}}m(\lambda',\lambda)\chi_{\lambda'}+\sum_{\lambda'<\lambda,\lambda'\in\mathcal{P}_{n,\textrm{odd}}^{\textrm{str}}}m(\lambda',\lambda)(\chi_{\lambda'}+\varepsilon\chi_{\lambda'}),
\end{equation}
where $m(\lambda',\lambda)$ is a non-negative integer depends on $\lambda'$ and $\lambda$. Recall that in this case we have
\begin{equation*}
N_\lambda^+=(-1)^{\lfloor(n-l)/2\rfloor}\cdot\prod_{n_j\mbox{ odd}}(-1)^{\nu_p(n_j)}(n_j)_{p'}\cdot\prod_{n_j\mbox{ even}}\frac{(-1)^{\nu_p(n_j)}(n_j)_{p'}}{2}.
\end{equation*}
}
\end{void}

\begin{theorem}\label{symbol_of_irr_+}
Let $\lambda=(n_1>n_2>\cdots>n_l>0)$ be a bar partition of $n$ and $\chi_\lambda$ be the corresponding irreducible character of $\tilde{S}_n^+$. Set $\mu_\lambda=\left(\frac{N_\lambda^+}{p}\right)$.
\begin{enumerate}
\item If $n-l$ is odd, then setting $\chi_\lambda^+=\chi_\lambda$, $\chi_\lambda^-=\varepsilon\chi_\lambda$, we have $\sigma(\chi_\lambda^+)=\chi_\lambda^{\mu_\lambda}$;
\item If $n-l$ is even, then setting $\mathrm{Res}_{\tilde{A}_n}^{\tilde{S}_n^+}\chi_\lambda=\chi'^+_\lambda+\chi'^-_\lambda$, we have $\sigma(\chi'^+_\lambda)=\chi'^{\mu_\lambda}_\lambda$.
\end{enumerate}
\end{theorem}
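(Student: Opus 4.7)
The strategy is to evaluate $\chi_\lambda$ (in case (1)) or the difference character $\delta_\lambda:=\chi'^+_\lambda-\chi'^-_\lambda$ (in case (2)) on the ``Coxeter element''
$$t_\lambda := t_{n_1}t_{n_2}\cdots t_{n_l}, \qquad t_{n_j}=s_1s_2\cdots s_{n_j-1}\in \tilde{S}_{n_j}^+,$$
and to show directly that $\sigma$ multiplies this nonzero value by $\mu_\lambda$. The image of $t_\lambda$ in $S_n$ has cycle type $\lambda$, which is precisely the class that distinguishes $\chi_\lambda$ from its $\varepsilon$-associate in case (1), and the split class distinguishing $\chi'^+_\lambda$ from $\chi'^-_\lambda$ in case (2) (note that $t_\lambda\in\tilde A_n$ iff $n-l$ is even iff the number $s$ of even $n_j$'s is even).

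To obtain a closed form for $\chi_\lambda(t_\lambda)$, I would first apply Theorem~\ref{symbol_of_Humphreys_product} to the Humphreys product $\chi_{\Delta'_{n_1}}\ast\cdots\ast\chi_{\Delta'_{n_l}}\in\mathrm{Irr}(\tilde{S}_\lambda^+)$, plugging in the Schur Hauptdarstellung values from equations~(\ref{symbol_nsa}) and~(\ref{symbol_sa}); this yields $(2\mathrm{i})^{\lfloor s/2\rfloor}$ times a product of factors of the form $\mathrm{i}^{(n_j-1)/2}\sqrt{n_j}$ (for odd $n_j$, where $\delta_{\Delta'_{n_j}}(t_{n_j})$ is used) and $\mathrm{i}^{n_j/2-1}\sqrt{n_j/2}$ (for even $n_j$). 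Next, I would use Frobenius reciprocity applied to the definition of $\tilde{\chi}_\lambda$ together with the decomposition~(\ref{decomposition_number}). The crucial ingredient is the leading-term lemma: if $\lambda'<\lambda$ in the ordering of (\ref{decomposition_number}), then $\chi_{\lambda'}(t_\lambda)=0$, because an element of cycle type $\lambda$ in $S_n$ is conjugate into $S_{\lambda'}$ only if $\lambda'$ refines $\lambda$. Hence $\chi_\lambda(t_\lambda)$ (or $\delta_\lambda(t_\lambda)$, via the $s$-even branch of Theorem~\ref{symbol_of_Humphreys_product}) equals a $\sigma$-fixed rational multiple of the Humphreys product value, and is nonzero.

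Applying $\sigma$ and using Remark~\ref{remark:action of sigma} together with $\sigma(\mathrm{i})=\left(\frac{-1}{p}\right)\mathrm{i}$, each $\sqrt{n_j}$ contributes $\left(\frac{(-1)^{\nu_p(n_j)}(n_j)_{p'}}{p}\right)$ and each $\sqrt{n_j/2}$ contributes $\left(\frac{(-1)^{\nu_p(n_j)}(n_j)_{p'}/2}{p}\right)$, while the total power of $\mathrm{i}$ contributes $\left(\frac{-1}{p}\right)$ raised to the exponent $\lfloor s/2\rfloor+\sum_{j=1}^l\lfloor (n_j-1)/2\rfloor$. A direct parity check, splitting on whether $s$ is even or odd, gives
$$\lfloor s/2\rfloor + \sum_{j=1}^{l}\lfloor (n_j-1)/2\rfloor \;\equiv\; \lfloor (n-l)/2\rfloor \pmod 2,$$
so by multiplicativity of the Legendre symbol the full product of signs collapses exactly to $\left(\frac{N_\lambda^+}{p}\right)=\mu_\lambda$. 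To conclude: in case~(1), $t_\lambda\notin\tilde A_n$ gives $\varepsilon\chi_\lambda(t_\lambda)=-\chi_\lambda(t_\lambda)$, and the fact that $\sigma$ preserves the pair $\{\chi_\lambda,\varepsilon\chi_\lambda\}$ (since multiplication by the rational-valued $\varepsilon$ commutes with $\sigma$, and $\tilde\chi_\lambda$ is $\sigma$-stable so its top summand's orbit is contained in $\{\chi_\lambda,\varepsilon\chi_\lambda\}$) forces $\sigma(\chi_\lambda)=\chi_\lambda^{\mu_\lambda}$; case~(2) is entirely analogous with $\delta_\lambda$ in place of $\chi_\lambda$.

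The principal obstacle is the leading-term lemma used in the second paragraph --- verifying that $\chi_{\lambda'}(t_\lambda)=0$ for every $\lambda'<\lambda$ in the ordering of (\ref{decomposition_number}) --- which must be proved in the double-cover setting and requires knowing the precise combinatorics of which strict partitions appear below $\lambda$. A secondary but purely mechanical obstacle is the bookkeeping in the parity computation above, together with keeping track of the $\sqrt{2}$-factors that arise from the $(2\mathrm{i})^{\lfloor s/2\rfloor}$ prefactor and the $\sqrt{n_j/2}$ summands; fortunately $2$ is a unit mod $p$ (as $p$ is odd) so these cause no essential difficulty once the parity identity is in hand.
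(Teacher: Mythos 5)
Your first half---building $\tilde{\chi}_\lambda$ from the Schur Hauptdarstellungen via the Humphreys product (Theorem~\ref{symbol_of_Humphreys_product}) and feeding in the values from equations~(\ref{symbol_nsa}) and~(\ref{symbol_sa}) through Remark~\ref{remark:action of sigma}---coincides with the paper's argument, and your parity identity $\lfloor s/2\rfloor+\sum_{j}\lfloor(n_j-1)/2\rfloor\equiv\lfloor(n-l)/2\rfloor\pmod{2}$ is correct. Where you diverge is the transfer from $\tilde{\chi}_\lambda$ to $\chi_\lambda$. The paper uses a purely formal $\varepsilon$-symmetry argument internal to~(\ref{decomposition_number}): the lower part $\tilde{\chi}_\lambda-\chi_\lambda$ is $\varepsilon$-fixed because each non-self-associated $\chi_{\lambda'}$ appears together with $\varepsilon\chi_{\lambda'}$ at the same multiplicity, so $\tilde{\chi}_\lambda-\varepsilon\tilde{\chi}_\lambda=\chi_\lambda-\varepsilon\chi_\lambda$ (resp.\ the restriction analogue in case (2)); applying $\sigma$ to this identity then transfers $\sigma(\tilde{\chi}_\lambda)=\tilde{\chi}_\lambda^{\mu_\lambda}$ directly to $\sigma(\chi_\lambda)=\chi_\lambda^{\mu_\lambda}$, with no element evaluations at all. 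You instead evaluate at $t_\lambda$ and invoke a leading-term vanishing $\chi_{\lambda'}(t_\lambda)=0$ for $\lambda'<\lambda$. That statement is in fact true, but your justification is both misoriented and incomplete: the refinement condition goes the other way (an element of cycle type $\lambda$ is conjugate into $S_{\lambda'}$ iff $\lambda$ refines $\lambda'$, not $\lambda'$ refines $\lambda$, which fails when $\lambda'<\lambda$), and what a support argument gives directly is only $\tilde{\chi}_{\lambda'}(t_\lambda)=0$ for the induced character---to push the vanishing down to the irreducible constituent $\chi_{\lambda'}$ you still need a downward induction along $<$ through the triangularity of~(\ref{decomposition_number}). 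You correctly flag the leading-term lemma as the principal obstacle; the paper's $\varepsilon$-symmetry argument sidesteps it entirely and is the shorter, self-contained route.
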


\begin{proof}
Since $\sigma(\{\chi_{\Delta'_{n_j}},\varepsilon\chi_{\Delta'_{n_j}}\})=\{\chi_{\Delta'_{n_j}},\varepsilon\chi_{\Delta'_{n_j}}\}$, we have $\sigma(\tilde{\chi}_\lambda)=\tilde{\chi}_\lambda$ or $\varepsilon\tilde{\chi}_\lambda$. By direct calculation
\begin{equation*}
\begin{split}
\mu_\lambda=\left(\frac{-1}{p}\right)^{\lfloor s/2\rfloor}&\cdot\prod_{n_j\mbox{ odd}}\left(\frac{-1}{p}\right)^{(n_j-1)/2}\left(\frac{(-1)^{\nu_p(n_j)}(n_j)_{p'}}{p}\right)\\
&\cdot\prod_{n_j\mbox{ even}}\left(\frac{-1}{p}\right)^{n_j/2-1}\left(\frac{(-1)^{\nu_p(n_j)}(n_j)_{p'}/2}{p}\right),
\end{split}
\end{equation*}
where $s=\#\{\mbox{even numbers in }\lambda\}$. Now we apply Theorem~\ref{symbol_of_Humphreys_product} and obtain the following:
\begin{enumerate}
\item If $n-l$ is odd, then setting $\tilde{\chi}_\lambda^+=\tilde{\chi}_\lambda$, $\tilde{\chi}_\lambda^-=\varepsilon\tilde{\chi}_\lambda$, we have $\sigma(\tilde{\chi}_\lambda^+)=\tilde{\chi}_\lambda^{\mu_\lambda}$ by the structure of $\tilde{\chi}_\lambda$ and the equality~(\ref{symbol_nsa}). Since
$$
\tilde{\chi}_\lambda-\chi_\lambda=\varepsilon\tilde{\chi}_\lambda-\varepsilon\chi_\lambda=\sigma(\tilde{\chi}_\lambda)-\sigma(\chi_\lambda)
$$
by the equality~(\ref{decomposition_number}), we get $\sigma(\chi_\lambda^+)=\chi_\lambda^{\mu_\lambda}$.
\item If $n-l$ is even, then setting $\mathrm{Res}_{\tilde{A}_n}^{\tilde{S}_n^+}\tilde{\chi}_\lambda=\tilde{\chi}'^+_\lambda+\tilde{\chi}'^-_\lambda$, we have $\sigma(\tilde{\chi}'^+_\lambda)=\tilde{\chi}'^{\mu_\lambda}_\lambda$  by the structure of $\tilde{\chi}_\lambda$ and the equality~(\ref{symbol_sa}). By the equality~(\ref{decomposition_number}), we have
$$
\tilde{\chi}'^+_\lambda+\tilde{\chi}'^-_\lambda=\chi'^+_\lambda+\chi'^-_\lambda+\sum_j\phi_j,
$$
where $\phi_j$ is an irreducible component of $\mathrm{Res}_{\tilde{A}_n}^{\tilde{S}_n^+}\chi_{\lambda'}$ for some $\lambda'<\lambda$. Hence by relabeling of $\tilde{\chi}'^+_\lambda$ and $\tilde{\chi}'^-_\lambda$ we may assume that
$$
\tilde{\chi}'^+_\lambda=\chi'^+_\lambda+\sum_j\phi'_j\mbox{ and }\tilde{\chi}'^-_\lambda=\chi'^-_\lambda+\sum_j\phi''_j,
$$
where $\phi'_j$ (resp. $\phi''_j$) is an irreducible component of $\mathrm{Res}_{\tilde{A}_n}^{\tilde{S}_n^+}\chi_{\lambda'}$ for some $\lambda'<\lambda$. Thus we have $\sigma(\chi'^+_\lambda)=\chi'^{\mu_\lambda}_\lambda$.
\end{enumerate}
\end{proof}

\begin{void}
{\rm 
\textbf{The Irreducible Characters of $\tilde{S}_n^-$.}	Recall that
\begin{equation*}
\begin{split}
N_\lambda^-&=(-1)^{\lfloor(n-l)/2\rfloor}\cdot\prod_{n_j\mbox{ odd}}(-1)^{\nu_p(n_j)}(n_j)_{p'}\cdot\prod_{n_j\mbox{ even}}\frac{(-1)^{\nu_p(n_j)}(n_j)_{p'}}{-2}=(-1)^{n-l}N_\lambda^+.
\end{split}
\end{equation*}
If
\begin{eqnarray*}
\Gamma\colon\tilde{S}_n^+&\to&\mathrm{GL}_m(\bar{\mathbb{Q}}_p),\\
s_j&\mapsto&\Gamma(s_j)
\end{eqnarray*}
is an irreducible spin representation of $\tilde{S}_n^+$, then we can check that
\begin{eqnarray*}
\Gamma'\colon\tilde{S}_n^-&\to&\mathrm{GL}_m(\bar{\mathbb{Q}}_p),\\
s_j&\mapsto&\mathrm{i}\cdot\Gamma(s_j)
\end{eqnarray*}
is an irreducible spin representation of $\tilde{S}_n^-$ as well. So we immediately get the following result.
}
\end{void}

\begin{theorem}\label{symbol_of_irr_-}
Let $\lambda=(n_1>n_2>\cdots>n_l>0)$ be a bar partition of $n$ and $\chi_\lambda$ be the corresponding irreducible character of $\tilde{S}_n^-$. Set $\mu_\lambda=\left(\frac{N_\lambda^-}{p}\right)$.
\begin{enumerate}
\item If $n-l$ is odd, then setting $\chi_\lambda^+=\chi_\lambda$, $\chi_\lambda^-=\varepsilon\chi_\lambda$, we have $\sigma(\chi_\lambda^+)=\chi_\lambda^{\mu_\lambda}$;
\item If $n-l$ is even, then setting $\mathrm{Res}_{\tilde{A}_n}^{\tilde{S}_n^+}\chi_\lambda=\chi'^+_\lambda+\chi'^-_\lambda$, we have $\sigma(\chi'^+_\lambda)=\chi'^{\mu_\lambda}_\lambda$.
\end{enumerate}
\end{theorem}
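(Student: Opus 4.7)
The plan is to lift the proof of Theorem~\ref{symbol_of_irr_+} through the explicit correspondence between irreducible spin representations of $\tilde{S}_n^+$ and $\tilde{S}_n^-$ recalled just above the statement: if $\Gamma$ realises $\chi_\lambda\in\mathrm{Irr}(\tilde{S}_n^+)$ and $\Gamma'$ is the representation of $\tilde{S}_n^-$ defined by $\Gamma'(s_j)=\mathrm{i}\,\Gamma(s_j)$, then for any word $w=s_{j_1}\cdots s_{j_k}$ in the generators one has $\Gamma'(w)=\mathrm{i}^k\Gamma(w)$ as matrices. The first step is to fix the distinguished word representatives used in the proof of Theorem~\ref{symbol_of_irr_+}: put $t_j=s_{n_1+\cdots+n_{j-1}+1}\cdots s_{n_1+\cdots+n_j-1}$ for $1\leq j\leq l$ and $t=t_1t_2\cdots t_l$, whose total generator-length is $\sum_{j=1}^{l}(n_j-1)=n-l$.

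Next I would observe that the Humphreys-product construction of $\chi_\lambda$, the inflation identity~(\ref{decomposition_number}), and every combinatorial input used in the proof of Theorem~\ref{symbol_of_irr_+} depend only on the relations among the $s_i$ and on the values of the $\chi_{\Delta'_{n_j}}$ (or $\delta_{\Delta'_{n_j}}$) at the elements $t_j$. Under the twist $\Gamma\mapsto\mathrm{i}\Gamma$, each such value is multiplied by $\mathrm{i}^{n_j-1}$, so the value of $\chi_\lambda$ (or $\delta_\lambda$) at $t$ in $\tilde{S}_n^-$ differs from that in $\tilde{S}_n^+$ by the single factor $\mathrm{i}^{n-l}$. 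Because $\sigma(\mathrm{i})=\left(\frac{-1}{p}\right)\mathrm{i}$, applying $\sigma$ in $\tilde{S}_n^-$ produces the same Galois sign as in the proof of Theorem~\ref{symbol_of_irr_+}, multiplied by $\left(\frac{-1}{p}\right)^{n-l}$.

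The identity $N_\lambda^-=(-1)^{n-l}N_\lambda^+$ recorded in the excerpt now gives
\[
\left(\frac{N_\lambda^-}{p}\right)=\left(\frac{-1}{p}\right)^{n-l}\left(\frac{N_\lambda^+}{p}\right),
\]
so the extra $\left(\frac{-1}{p}\right)^{n-l}$ produced by the twist is absorbed exactly by the change from the $\mu_\lambda$ of Theorem~\ref{symbol_of_irr_+} to the $\mu_\lambda$ of the present statement. Cases~(1) and~(2) are then handled in exact parallel with the two cases of Theorem~\ref{symbol_of_irr_+}: in the non-self-associated case ($n-l$ odd) one tracks $\chi_\lambda$ and $\varepsilon\chi_\lambda$ and reads off $\sigma(\chi_\lambda^+)/\chi_\lambda^+$ at $t\notin\tilde{A}_n$, while in the self-associated case ($n-l$ even) one tracks $\chi'^+_\lambda,\chi'^-_\lambda$ from $\mathrm{Res}_{\tilde{A}_n}^{\tilde{S}_n^-}\chi_\lambda$ and reads off $\sigma(\delta_\lambda)/\delta_\lambda$ at $t\in\tilde{A}_n$.

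The one technical subtlety to be careful about is that $\Gamma'(w)=\mathrm{i}^k\Gamma(w)$ must be interpreted with a \emph{fixed} word $w$, since distinct words for the same group element can have lengths differing by an amount that reflects the opposite lifts of $-1$ in $\tilde{S}_n^+$ and $\tilde{S}_n^-$. Working throughout with the single representative $t=t_1\cdots t_l$ of length $n-l$ sidesteps this, after which the Galois computation is a direct transcription of the argument given for Theorem~\ref{symbol_of_irr_+}.
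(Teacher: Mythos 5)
Your proposal is correct and follows essentially the same route the paper intends: the paper records the twist $\Gamma'(s_j)=\mathrm{i}\,\Gamma(s_j)$ and the identity $N_\lambda^-=(-1)^{n-l}N_\lambda^+$, and then simply says the result follows; you fill in precisely why — the twist multiplies the distinguishing character values at the fixed word $t$ of generator-length $n-l$ by $\mathrm{i}^{n-l}$, Galois picks up $\bigl(\frac{-1}{p}\bigr)^{n-l}$, and that is exactly the factor converting $\bigl(\frac{N_\lambda^+}{p}\bigr)$ to $\bigl(\frac{N_\lambda^-}{p}\bigr)$.
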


\begin{void}\label{void:blocks of double covers}
{\rm	\textbf{Blocks of $\tilde{S}_n^\eta$.} Let $\lambda=(n_1>n_2>\cdots>n_l>0)$ be a bar partition of $n$ and assume that $p$ is an odd prime. As in \cite[Section~5~(5)]{Ols93}, we can define three types of operations on $\lambda$.
\begin{itemize}
\item Type 1: Replace $n_j$ by $n_j-p$ if $n_j>p$ and $n_j-p\notin\lambda$;
\item Type 2: Remove $p$ if $p\in\lambda$;
\item Type 3: Remove both $n_j$ and $p-n_j$ if $p-n_j\in\lambda$.
\end{itemize}
Continue these operations until no more operation is allowed. If we finally get a bar partition $\kappa$, then $\kappa$ is called the $p$-bar-core of $\lambda$. The spin blocks of $\tilde{S}_n^\eta$ are parametrized by $p$-bar-core partitions. More specifically, if $\lambda$ itself is a $p$-bar-core, then $\chi_\lambda$ is a $p$-defect-zero character and we denote this defect-zero block by $B_\lambda$. Otherwise, strict partitions $\lambda$ and $\lambda'$ have the same $p$-bar-core $\kappa$ if and only if irreducible characters $\chi_\lambda$ and $\chi_{\lambda'}$ belong to the same $p$-block, denoted by $B_\kappa$; see \cite[Theorem~(13.1)]{Ols93}. Moreover, a positive defect spin $p$-block of $\tilde{S}_n^\eta$ covers only one spin $p$-block of $\tilde{A}_n$; see \cite[Page~89]{Ols93}.
}
\end{void}

\begin{theorem}\label{symbol_of_IBr}
If $\lambda=(n_1>n_2>\cdots>n_l>0)$ is a bar partition of $n$ and $\kappa$ is the $p$-bar-core of $\lambda$. Set $n_0=|\kappa|$ and set $n=n_0+pw$. Assume that $p\nmid n_j$ for all $n_j\in\lambda$.
\begin{enumerate}
\item If $\lambda\in\mathcal{P}_{n,\textrm{even}}^{\textrm{str}}$ or $\kappa\in\mathcal{P}_{n_0,\textrm{odd}}^{\textrm{str}}$, then
$$
\left(\frac{N_\lambda^\eta}{p}\right)=\left(\frac{N_\kappa^\eta}{p}\right)\cdot\left(\frac{-1}{p}\right)^{\lfloor w/2\rfloor}\left(\frac{2\eta}{p}\right)^w;
$$
\item If $\lambda\in\mathcal{P}_{n,\textrm{odd}}^{\textrm{str}}$ or $\kappa\in\mathcal{P}_{n_0,\textrm{even}}^{\textrm{str}}$, then
$$
\left(\frac{N_\lambda^\eta}{p}\right)=\left(\frac{N_\kappa^\eta}{p}\right)\cdot\left(\frac{-1}{p}\right)^{\lceil w/2\rceil}\left(\frac{2\eta}{p}\right)^w,
$$
where the notation $\lceil w/2\rceil$ means the smallest integer which is not lower than $w/2$.
\end{enumerate}
\end{theorem}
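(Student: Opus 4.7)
The plan is to prove the theorem by chaining the $w$ operations that reduce $\lambda$ to its $p$-bar-core $\kappa$. Since $p\nmid n_j$ for every $n_j\in\lambda$, only operations of Type~1 and Type~3 are available, and both preserve coprimality of parts to $p$, so the intermediate partitions all satisfy the same hypothesis. Set $\alpha=\left(\frac{-1}{p}\right)$ and $\beta=\left(\frac{2\eta}{p}\right)$. The plan has two pieces: first, compute the effect of a single operation on $\left(\frac{N_\lambda^\eta}{p}\right)$; second, telescope along the chain from $\lambda$ to $\kappa$.

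For the single-operation identity, write $N_\lambda^\eta=(-1)^{\lfloor(n-l)/2\rfloor}M_\lambda^\eta$ with $M_\lambda^\eta=\prod_j n'_j$, so $\left(\frac{N_\lambda^\eta}{p}\right)=\alpha^{\lfloor(n-l)/2\rfloor}\left(\frac{M_\lambda^\eta}{p}\right)$. A Type~1 move that replaces a part $n_j$ by $n_j-p$ toggles the factor $(2\eta)^{-1}$ in the definition of $n'_j$ between the two partitions; since $n_j-p\equiv n_j\pmod p$, this yields $\left(\frac{M_{\lambda'}^\eta}{p}\right)=\beta\left(\frac{M_\lambda^\eta}{p}\right)$. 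A Type~3 move removes two parts $n_j$ and $p-n_j$ of opposite parities; using $n_j(p-n_j)\equiv -n_j^2\pmod p$, the Legendre symbol of $M^\eta$ changes by $\left(\frac{-n_j^2/(2\eta)}{p}\right)=\alpha\beta$. The sign factor flips because each operation decreases $n-l$ by an odd integer ($p$ for Type~1, $p-2$ for Type~3), and a short case check of the four possible values of $\lfloor(n-l)/2\rfloor-\lfloor(n'-l')/2\rfloor$, combined with the elementary identities $\alpha^{(p-1)/2}=\alpha$ and $\alpha^{(p+1)/2}=1$ valid for every odd prime $p$, collapses both operation types onto the uniform ratio
$$
\frac{\left(\frac{N_\lambda^\eta}{p}\right)}{\left(\frac{N_{\lambda'}^\eta}{p}\right)}=\begin{cases}\beta,&\lambda\in\mathcal{P}_{n,\textrm{even}}^{\textrm{str}},\\ \alpha\beta,&\lambda\in\mathcal{P}_{n,\textrm{odd}}^{\textrm{str}}.\end{cases}
$$

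To finish, note that each operation flips the parity of $n-l$, so the classes of the intermediate partitions in any chain from $\lambda$ to $\kappa$ alternate between $\mathcal{P}^{\textrm{str}}_{\textrm{even}}$ and $\mathcal{P}^{\textrm{str}}_{\textrm{odd}}$. Telescoping the ratios above along this chain contributes $\beta$ at every step and an additional $\alpha$ at exactly those steps whose current partition lies in $\mathcal{P}^{\textrm{str}}_{\textrm{odd}}$. Counting these steps in a length-$w$ alternating chain starting at $\lambda$ gives $\lfloor w/2\rfloor$ extra factors of $\alpha$ if $\lambda\in\mathcal{P}^{\textrm{str}}_{\textrm{even}}$ and $\lceil w/2\rceil$ extra factors if $\lambda\in\mathcal{P}^{\textrm{str}}_{\textrm{odd}}$, producing the two formulas of the statement. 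The two hypothesis disjuncts of each Case are consistent: Case~(1) and Case~(2) overlap exactly when $w$ is even (so that $\lfloor w/2\rfloor=\lceil w/2\rceil$ and $\lambda,\kappa$ share a class), while when $w$ is odd the alternation forces $\lambda$ and $\kappa$ into opposite classes and each Case picks out exactly one of its disjuncts. The main obstacle is verifying the uniform single-operation identity above: the extra $-1$ arising in Type~3 from $p-n_j\equiv -n_j\pmod p$ must compensate exactly for the different shift in $\lfloor(n-l)/2\rfloor$ caused by removing two parts rather than shifting one, and it is this cancellation that makes the single-operation ratio depend only on the class of $\lambda$ and not on which operation is applied.
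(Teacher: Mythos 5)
Your proposal is correct and follows essentially the same route as the paper: both establish the single-operation identity showing the ratio of Legendre symbols is $\left(\frac{2\eta}{p}\right)$ when $\lambda$ is self-associated and $\left(\frac{-2\eta}{p}\right)$ when it is not, and then telescope along the $w$-step chain from $\lambda$ to $\kappa$. The only stylistic difference is that you make the telescoping count of $\left(\frac{-1}{p}\right)$-factors explicit (getting $\lfloor w/2\rfloor$ or $\lceil w/2\rceil$ from the alternation of classes), whereas the paper records only the single-step identity and leaves the accumulation implicit.
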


\begin{proof}
Note that $\nu_p(n_j)=0$ and $(n_j)_{p'}=n_j$ for all $n_j\in\lambda$. Recall that
$$
N_\lambda^\eta=(-1)^{\lfloor(n-l)/2\rfloor}\prod_{j=1}^ln'_j=(-1)^{\lfloor(n-l)/2\rfloor}\cdot\prod_{n_j\mbox{ odd}}n_j\cdot\prod_{n_j\mbox{ even}}\frac{n_j}{2\eta}.
$$
Since $\lambda$ contains no element divisable by $p$, the operation of type 2 is unnecessary. Note that after one step of type 1 operation or type 3 operation, a self-associated bar partition will become a non-self-associated bar partition, and a non self-associated bar partition will become a self-associated bar partition. Note that we get the $p$-bar-core $\kappa$ by taking $w$ steps of operations on $\lambda$. It suffices to prove that
\begin{equation*}
\left(\frac{N_\lambda^\eta}{p}\right)\left(\frac{N_{\lambda'}^\eta}{p}\right)=\left\{\begin{array}{cl}
\left(\frac{2\eta}{p}\right),&\mbox{if }\lambda\mbox{ is self-associated},\\
\left(\frac{-2\eta}{p}\right),&\mbox{if }\lambda\mbox{ is non-self-associated},
\end{array}\right.
\end{equation*}
where $\lambda'$ is a strict partition obtained from $\lambda$ by applying one operation of type~1 or type 3.
\begin{itemize}
\item For an operation of type 1 at $n_{j_0}$, we have
$$
(-1)^{\lfloor(n-l)/2\rfloor}\frac{N_\lambda^\eta}{n'_{j_0}}=(-1)^{\lfloor(n-p-l)/2\rfloor}\frac{N_{\lambda'}^\eta}{(n_{j_0}-p)'}=\prod_{n_j\neq n_{j_0}}n'_j.
$$
Note that
$$
\lfloor(n-l)/2\rfloor-\lfloor(n-p-l)/2\rfloor=\left\{\begin{array}{ll}
(p+1)/2,&\mbox{if }n-l\mbox{ is even},\\
(p-1)/2,&\mbox{if }n-l\mbox{ is odd}.
\end{array}\right.
$$
We have
\begin{equation*}
\begin{split}
\left(\frac{N_\lambda^\eta}{p}\right)\left(\frac{N_{\lambda'}^\eta}{p}\right)=&\left(\frac{-1}{p}\right)^{\lfloor(n-l)/2\rfloor-\lfloor(n-p-l)/2\rfloor}\left(\frac{n'_{j_0}(n_{j_0}-p)'}{p}\right)\\
=&\left(\frac{-1}{p}\right)^{\lfloor(n-l)/2\rfloor-\lfloor(n-p-l)/2\rfloor}\left(\frac{2\eta}{p}\right)\\
=&\left(\frac{-1}{p}\right)^{(p+1)/2+(n-l)}\left(\frac{2\eta}{p}\right)=\left(\frac{-1}{p}\right)^{n-l}\left(\frac{2\eta}{p}\right).
\end{split}
\end{equation*}
\item For an operation of type 3 at $n_{j_0}$, we have
$$
(-1)^{\lfloor(n-l)/2\rfloor}\frac{N_\lambda^\eta}{n'_{j_0}(p-n_{j_0})'}=(-1)^{\lfloor(n-p-l+2)/2\rfloor}N_{\lambda'}^\eta=\prod_{n_j\neq n_{j_0},p-n_{j_0}}n'_j.
$$
Note that
$$
\lfloor(n-l)/2\rfloor-\lfloor(n-p-l+2)/2\rfloor=\left\{\begin{array}{ll}
(p-1)/2,&\mbox{if }n-l\mbox{ is even},\\
(p-3)/2,&\mbox{if }n-l\mbox{ is odd}.
\end{array}\right.
$$
We have
\begin{equation*}
\begin{split}
\left(\frac{N_\lambda^\eta}{p}\right)\left(\frac{N_{\lambda'}^\eta}{p}\right)=&\left(\frac{-1}{p}\right)^{\lfloor(n-l)/2\rfloor-\lfloor(n-p-l+2)/2\rfloor}\left(\frac{n'_{j_0}(p-n_{j_0})'}{p}\right)\\
=&\left(\frac{-1}{p}\right)^{(p-1)/2-(n-l)}\left(\frac{-2\eta}{p}\right)=\left(\frac{-1}{p}\right)^{n-l}\left(\frac{2\eta}{p}\right).
\end{split}
\end{equation*}
\end{itemize}
This completes the proof.
\end{proof}

\begin{remark}
{\rm	From the proof of Theorem \ref{symbol_of_IBr}, we can observe that $\lambda$ and $\kappa$ are both self-associated or non-self-associated if and only if $w$ is even.}
\end{remark}

\section{The Irreducible Brauer Characters of Double Covers of Symmetric Groups}\label{section4}

Let $\kappa$ be a $p$-bar-core partition. Denote by
$$
\Lambda_\kappa=\left\{\lambda=(n_1>n_2>\cdots>n_l>0)\ \middle|\ \substack{\mbox{$\kappa$ is the $p$-bar-core of $\lambda$},\\ \mbox{$p\nmid n_j$ for all $1\leq j\leq l$}}\right\}
$$
the set of bar partitions with no part divisable by $p$. In \cite{BG}, the set $\Lambda_\kappa$ is proved to define a $p$-basic set for double covers of symmetric and alternating groups.

The natural group homomorphism $\tilde{S}_n^\eta\to S_n$ provides a bijection between $p$-elements in $\tilde{S}_n^\eta$ and $p$-elements in $S_n$.

\begin{lemma}
Let $u$ be a $p$-element in $\tilde{S}_n^\eta$ whose image in $S_n$ contains $m_j$ copies of $p^j$-cycles with $0\leq j\leq l$. Then we have
$$
O_p(C_{\tilde{S}_n^\eta}(u))=I_{m_0}\times(\mathbb{Z}/p\Z)^{m_1}\times(\mathbb{Z}/p^2\Z)^{m_2}\times\cdots\times(\mathbb{Z}/p^l\Z)^{m_l},
$$
where $I_{m_0}$ denotes the trivial subgroup of $\tilde{S}_{m_0}^\eta$	and
$$
C_{\tilde{S}_n^\eta}(u)/O_p(C_{\tilde{S}_n^\eta}(u))=\tilde{S}_{m_0}^{\eta_0}*\tilde{S}_{m_1}^{\eta_1}*\tilde{S}_{m_2}^{\eta_2}*\cdots*\tilde{S}_{m_l}^{\eta_l},
$$
where $\eta_j=\left(\frac{-1}{p}\right)^j\eta$. If $\beta_j$ is an irreducible spin $p$-Brauer character of $\tilde{S}_{m_j}^{\eta_j}$ for $0\leq j\leq l$ and $\beta_0$ belongs to the spin block of $\tilde{S}_{m_0}^\eta$ defined by $\kappa$, then
$$
\mathrm{bl}(\beta_0*\beta_1*\beta_2*\cdots*\beta_l)^{\tilde{S}_n^\eta}=\mathrm{bl}(B_\kappa).
$$
\end{lemma}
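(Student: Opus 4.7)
The plan is to handle the three structural claims (centralizer, normal $p$-subgroup, quotient) by reducing to the classical wreath-product description of $C_{S_n}(\bar u)$, and then to prove the block equation using Brauer's second main theorem driven by the spin Murnaghan--Nakayama rule.

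First I would identify $C_{\tilde{S}_n^\eta}(u)$ with the full preimage of $C_{S_n}(\bar u)$: since $u$ has odd $p$-power order while $\ker(\tilde{S}_n^\eta\to S_n)=\{\pm 1\}$ has order $2$, any preimage of an element of $C_{S_n}(\bar u)$ conjugates $u$ to $\pm u$, and comparing $p^k$-th powers rules out the sign $-$. The classical decomposition $C_{S_n}(\bar u)=S_{m_0}\times\prod_{j=1}^l(C_{p^j}\wr S_{m_j})$ then lifts, on disjoint supports, to a twisted central product in $\tilde{S}_n^\eta$. Because $p$ is odd and the base $p$-subgroups $(C_{p^j})^{m_j}$ are abelian of odd order, their preimages split as $\{\pm 1\}\times(\mathbb{Z}/p^j\mathbb{Z})^{m_j}$, giving the stated description of $O_p(C_{\tilde{S}_n^\eta}(u))$ (all lifts involved satisfy $\varepsilon=+1$, so the $p$-parts commute honestly).

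To identify each quotient factor $\tilde{S}_{m_j}^{\eta_j}$, I would represent, modulo the base $p$-group, the class of a lift of a ``swap of two $p^j$-cycles'' as a product $s=t_1 t_2\cdots t_{p^j}$ of $p^j$ lifts of disjoint transpositions, with $t_i^2=\eta$ and $[t_i,t_k]=-1$ for $i\neq k$. A short sign-accumulation calculation yields $s^2=\eta^{p^j}(-1)^{\binom{p^j}{2}}$, and since $p$ is odd, combined with $(-1)^{(p^j-1)/2}=\left(\frac{-1}{p}\right)^j$, this gives $s^2=\eta_j$. This is the defining squared-generator relation of $\tilde{S}_{m_j}^{\eta_j}$; analogous checks of the braid and commutation relations complete the identification, and the disjointness of supports assembles the pieces into the claimed twisted central product.

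For the block equation I would invoke Brauer's second main theorem. Fix an ordinary spin character $\chi_\lambda\in B_\kappa$ (so $\lambda$ has $p$-bar-core $\kappa$) and expand the $u$-section $\chi_\lambda(uy)=\sum_\mu d^u_{\lambda,\mu}\phi_\mu(y)$ for $p$-regular $y\in C_{\tilde{S}_n^\eta}(u)$; the Brauer characters $\phi_\mu$ appearing then lie in blocks of $C_{\tilde{S}_n^\eta}(u)$ whose Brauer induction to $\tilde{S}_n^\eta$ is $B_\kappa$. The spin Murnaghan--Nakayama rule (Morris--Olsson) expresses $\chi_\lambda(uy)$ as a signed sum over successive $p$-bar removals from $\lambda$ dictated by the cycle data of $u$, leaving a bar partition of size $m_0$ with $p$-bar-core $\kappa$; combined with Theorem~\ref{symbol_of_Humphreys_product} applied factor by factor, this identifies each nonzero $\phi_\mu$ as a Humphreys product $\beta_0*\beta_1*\cdots*\beta_l$ whose first factor $\beta_0$ lies in the $B_\kappa$-block of $\tilde{S}_{m_0}^\eta$, which gives the claim.

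The main obstacle is this last step. The structural computations of the first two paragraphs are classical once one is comfortable with the commutator conventions of the double covers, but the block-induction identification requires careful matching of the double-cover sign twists from the Murnaghan--Nakayama expansion against the $(2\mathrm{i})^{\lfloor s/2\rfloor}$-factors appearing in Humphreys products of spin characters; this sign bookkeeping, rather than the structural setup, is where the real combinatorial content of the lemma lies.
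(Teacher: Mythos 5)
Your first two paragraphs essentially match the paper's argument, just with more steps written out. The paper observes that $u$ is a $2'$-element, so $C_{\tilde{S}_n^\eta}(u)/\{\pm1\}=C_{S_n}(\bar u)$, recalls the classical wreath-product decomposition of $C_{S_n}(\bar u)$ with the note that ``the reflections in $S_{m_j}$ [are] a product of $p^j$ copies of $2$-cycles in $S_n$,'' and computes $\eta_j=(-1)^{\frac{1}{2}p^j(p^j-1)}\cdot\eta=\left(\tfrac{-1}{p}\right)^j\eta$. Your sign accumulation $s^2=(-1)^{\binom{p^j}{2}}\eta^{p^j}=\eta_j$ is exactly the same calculation, and your remark that the base $p$-group lifts split because they have odd order (and lie in $\tilde A_n$, hence commute) correctly fills in why $O_p$ of the preimage has the stated direct-product form.

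For the block identity the paper does something quite different from you: it simply cites Olsson's determination of the relevant block in \cite{Ols92} (p.~259). Your proposed derivation via Brauer's second main theorem and the spin Murnaghan--Nakayama rule is a legitimate alternative route in spirit, but as written it has a directionality gap. Brauer's second main theorem says that if $d^u_{\chi_\lambda,\phi_\mu}\neq0$ with $\chi_\lambda\in B_\kappa$, then $\mathrm{bl}(\phi_\mu)^{\tilde S_n^\eta}=B_\kappa$; your MN bookkeeping then shows the $\phi_\mu$ that \emph{do} appear have $\beta_0\in B_\kappa$. That establishes an inclusion of the set of MN-appearing characters inside $\{\text{Humphreys products with }\beta_0\in B_\kappa\}$, together with the fact that their blocks induce to $B_\kappa$. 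But the claim to be proved is the other way: \emph{every} Humphreys product with $\beta_0\in B_\kappa$ lies in a block inducing to $B_\kappa$. To close this you would need to show that every such block of $C_{\tilde S_n^\eta}(u)$ actually contains a Brauer character appearing with nonzero coefficient in the $u$-section of some $\chi_\lambda\in B_\kappa$ — say by choosing $\lambda$ so that the signed MN expansion collapses to a single surviving term, or by a counting/exhaustion argument showing the induced blocks partition correctly by bar-core. You flag the sign bookkeeping as the obstacle, but the more fundamental issue is this surjectivity-onto-blocks step, which your sketch does not address and which is precisely what the citation to \cite{Ols92} provides.
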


\begin{proof}
Denote by $\bar{u}$ the image of $u$ in $S_n$. Since $u$ is a $2'$-element, we have $C_{\tilde{S}_n^\eta}(u)/\{\pm1\}=C_{S_n}(\bar{u})$. Since
$$
C_{S_n}(\bar{u})=S_{m_0}\times((\mathbb{Z}/p\Z)\wr S_{m_1})\times((\mathbb{Z}/p^2\Z)\wr S_{m_2})\times\cdots\times((\mathbb{Z}/p^l\Z)\wr S_{m_l}),
$$
where the reflections in $S_{m_j}$ is a product of $p^j$ copies of $2$-cycles in $S_n$, we have
$$
O_p(C_{\tilde{S}_n^\eta}(u))=I_{m_0}\times(\mathbb{Z}/p\Z)^{m_1}\times(\mathbb{Z}/p^2\Z)^{m_2}\times\cdots\times(\mathbb{Z}/p^l\Z)^{m_l}
$$
and
$$
C_{\tilde{S}_n^\eta}(u)/O_p(C_{\tilde{S}_n^\eta}(u))=\tilde{S}_{m_0}^{\eta_0}*\tilde{S}_{m_1}^{\eta_1}*\tilde{S}_{m_2}^{\eta_2}*\cdots*\tilde{S}_{m_l}^{\eta_l},
$$
where
$$
\eta_j=(-1)^{\frac{1}{2}p^j(p^j-1)}\cdot\eta^{p^l}=\left(\frac{-1}{p}\right)^j\eta.
$$
The block of $\beta_0*\beta_1*\beta_2*\cdots*\beta_l$ has been determined in \cite[Page~259]{Ols92}; from there we see that
$$
\mathrm{bl}(\beta_0*\beta_1*\beta_2*\cdots*\beta_l)^{\tilde{S}_n^\eta}=\mathrm{bl}(B_\kappa).
$$
\end{proof}

\begin{lemma}\label{lemma:4.2}
For any bar partition $\lambda$ of $n$, we set
\begin{equation*}
\begin{split}
\lambda=\left(\begin{array}{c}
n_{01}>n_{02}>\cdots>n_{0l_0}>0,\\
n_{11}p>n_{12}p>\cdots>n_{1l_1}p>0,\\
n_{21}p^2>n_{22}p^2>\cdots>n_{2l_2}p^2>0,\\
\vdots\\
n_{m1}p^m>n_{m2}p^m>\cdots>n_{ml_m}p^m>0
\end{array}\right).
\end{split}
\end{equation*}
such that $p\nmid n_{jk}$. Let
\begin{equation*}
\begin{split}
\lambda_0&=(n_{01}>n_{02}>\cdots>n_{0l_0}>0),\\
\lambda_1&=(n_{11}>n_{12}>\cdots>n_{1l_1}>0),\\
\lambda_2&=(n_{21}>n_{22}>\cdots>n_{2l_2}>0),\\
\vdots&\\
\lambda_m&=(n_{m1}>n_{m2}>\cdots>n_{ml_m}>0)
\end{split}
\end{equation*}
and $s_\lambda$ be the number of non-self-associated bar partitions in $\{\lambda_j\mid0\leq j\leq m\}$. Then we have 
$$
N_\lambda^\eta=(-1)^{\frac{1}{2}(p+1)\sum\limits_{j=0}^mjl_j}\cdot(-1)^{\lfloor s_\lambda/2\rfloor}N_{\lambda_0}^{\eta_0}N_{\lambda_1}^{\eta_1}N_{\lambda_2}^{\eta_2}\cdots N_{\lambda_m}^{\eta_m}.
$$
Recall that $\eta_j=\left(\frac{-1}{p}\right)^j\eta$.
\end{lemma}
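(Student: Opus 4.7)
The plan is to compute the ratio $N_\lambda^\eta/\prod_j N_{\lambda_j}^{\eta_j}$ directly from the defining formula
$$
N_\lambda^\eta=(-1)^{\lfloor(n-l)/2\rfloor}\prod_{j=1}^l n'_j,
$$
tracking separately (a) the product-of-parts contributions and (b) the floor sign, and then showing that an odd-parts residue that arises in both computations cancels, leaving precisely the claimed exponent $\frac{p+1}{2}\sum_j jl_j+\lfloor s_\lambda/2\rfloor$. The first observation is that for each part $n_{jk}p^j$ of $\lambda$, since $p$ is odd, $\nu_p(n_{jk}p^j)=j$, $(n_{jk}p^j)_{p'}=n_{jk}$, and $n_{jk}p^j$ has the same parity as $n_{jk}$; hence the contribution of this part to $\prod n'_j$ inside $N_\lambda^\eta$ differs from the contribution of $n_{jk}$ to $\prod_j N_{\lambda_j}^{\eta_j}$ by a factor $(-1)^j$ from the $\nu_p$-sign, and (only when $n_{jk}$ is even) by an additional factor $\eta_j/\eta=\left(\frac{-1}{p}\right)^j=(-1)^{j(p-1)/2}$.

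Let $e_j$ (resp.\ $o_j=l_j-e_j$) denote the number of even (resp.\ odd) parts of $\lambda_j$. Aggregating the per-part contributions over all $j,k$ produces a product-level discrepancy of sign $(-1)^{\sum_j jl_j+\frac{p-1}{2}\sum_j je_j}$. Using $e_j=l_j-o_j$, this exponent is congruent modulo $2$ to $\frac{p+1}{2}\sum_j jl_j+\frac{p-1}{2}\sum_j jo_j$, so the desired sign $(-1)^{\frac{p+1}{2}\sum_j jl_j}$ already appears, together with an odd-parts residue $(-1)^{\frac{p-1}{2}\sum_j jo_j}$ that I will need to cancel.

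For the floor-sign analysis, set $a_j=n_jp^j-l_j$ and $b_j=n_j-l_j$, where $n_j=|\lambda_j|$. Since $p^j-1$ is even, $a_j\equiv b_j\pmod 2$, so $a_j$ is odd exactly when $\lambda_j$ is non-self-associated. The standard identity $\lfloor(\sum_j a_j)/2\rfloor=\sum_j\lfloor a_j/2\rfloor+\lfloor k/2\rfloor$, applied with $k=\#\{j:a_j\mbox{ odd}\}=s_\lambda$, together with the elementary $\lfloor a_j/2\rfloor-\lfloor b_j/2\rfloor=n_j(p^j-1)/2$, yields
$$
\Bigl\lfloor\tfrac{n-l}{2}\Bigr\rfloor-\sum_j\Bigl\lfloor\tfrac{n_j-l_j}{2}\Bigr\rfloor=\Bigl\lfloor\tfrac{s_\lambda}{2}\Bigr\rfloor+\sum_j n_j\tfrac{p^j-1}{2}.
$$
To evaluate the last term modulo $2$, factor $(p^j-1)/2=\frac{p-1}{2}(p^{j-1}+\cdots+1)$; the second factor is a sum of $j$ odd integers, hence $\equiv j\pmod 2$, so $(p^j-1)/2\equiv\frac{p-1}{2}j\pmod 2$. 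Since also $n_j\equiv o_j\pmod 2$, the floor discrepancy contributes the sign $(-1)^{\lfloor s_\lambda/2\rfloor+\frac{p-1}{2}\sum_j jo_j}$.

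Multiplying the product-part and floor-part sign contributions, the two identical odd-parts exponents $\frac{p-1}{2}\sum_j jo_j$ cancel and one obtains exactly the factor $(-1)^{\lfloor s_\lambda/2\rfloor+\frac{p+1}{2}\sum_j jl_j}$, as claimed. The main obstacle I anticipate is purely bookkeeping — keeping straight the three sources of signs (the $\nu_p$ factor, the $\eta$-versus-$\eta_j$ discrepancy on even parts, and the floor discrepancy) and spotting the key coincidence that the odd-part residue $\frac{p-1}{2}\sum_j jo_j$ appears with matching coefficient in both the product and floor computations, so that it disappears from the final answer.
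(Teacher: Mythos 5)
Your proof is correct and follows essentially the same route as the paper: form the ratio $N_\lambda^\eta/\prod_j N_{\lambda_j}^{\eta_j}$ from the defining formula, split the sign into a floor-discrepancy piece and a per-part product piece, and reduce modulo $2$ via $(p^j-1)/2\equiv j(p-1)/2$. The only difference is bookkeeping for the cancellation: you track even/odd part counts $e_j,o_j$ and match the residue $\tfrac{p-1}{2}\sum_j jo_j$ across the two pieces, while the paper combines the exponents into $\sum_{j,k}(2n_{jk}-1)\tfrac{p^j-1}{2}$ and uses that $2n_{jk}-1$ is odd.
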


\begin{proof}
Set $|\lambda_j|=n_{j1}+n_{j2}+\cdots+n_{jl_j}$. By definition we have
\begin{equation*}
\frac{N_\lambda^\eta}{(-1)^{\lfloor s_\lambda/2\rfloor}N_{\lambda_0}^{\eta_0}N_{\lambda_1}^{\eta_1}N_{\lambda_2}^{\eta_2}\cdots N_{\lambda_m}^{\eta_m}}=\frac{(-1)^{\lfloor(n-l)/2\rfloor}\cdot\prod\limits_{n_i\in\lambda}(-1)^{\nu_p(n_i)}}{(-1)^{\lfloor s_\lambda/2\rfloor}\cdot\prod\limits_{j=0}^m(-1)^{\lfloor(|\lambda_j|-l_j)/2\rfloor}\cdot\prod\limits_{n_i\mbox{ even}}\left(\frac{-1}{p}\right)^{\nu_p(n_i)}}.
\end{equation*}
Note that
$$
\sum_{n_i\in\lambda}\nu_p(n_i)=\sum_{j=0}^m\sum_{k=1}^{l_j}j=\sum\limits_{j=0}^mjl_j
$$
and
\begin{equation*}
\begin{split}
\prod_{n_i\mbox{ even}}\left(\frac{-1}{p}\right)^{\nu_p(n_i)}&=\prod_{j=0}^m\prod_{k=1}^{l_j}\left(\left(\frac{-1}{p}\right)^{n_{jk}-1}\right)^j=(-1)^{\frac{1}{2}(p-1)\sum\limits_{j=0}^m\sum\limits_{k=1}^{l_j}j(n_{jk}-1)}\\
&=(-1)^{\sum\limits_{j=0}^m\sum\limits_{k=1}^{l_j}\frac{1}{2}(p^j-1)(n_{jk}-1)}.
\end{split}
\end{equation*}
Since $s_\lambda$ is the number of non-self-associated bar partitions in $\{\lambda_j\mid0\leq j\leq m\}$, there are $s_\lambda$ odd integers in $\{|\lambda_j|-l_j\mid0\leq j\leq m\}$, so we have 
$$
\sum_{j=0}^m\lfloor(|\lambda_j|-l_j)/2\rfloor=\left(\sum_{j=0}^m\sum_{k=1}^{l_j}n_{jk}-\sum_{j=0}^ml_j-s_\lambda\right)/2=\left(\sum_{j=0}^m\sum_{k=1}^{l_j}n_{jk}-l-s_\lambda\right)/2.
$$
Moreover, since $s_\lambda\equiv n-l\mod2$, we have
$$
\lfloor(n-l)/2\rfloor-\lfloor s_\lambda/2\rfloor=(n-l-s_\lambda)/2=\left(\sum_{j=0}^m\sum_{k=1}^{l_j}n_{jk}p^j-l-s_\lambda\right)/2.
$$
Hence we conclude that
\begin{equation*}
\begin{split}
\frac{N_\lambda^\eta}{(-1)^{\lfloor s_\lambda/2\rfloor}N_{\lambda_0}^{\eta_0}N_{\lambda_1}^{\eta_1}N_{\lambda_2}^{\eta_2}\cdots N_{\lambda_m}^{\eta_m}}&=(-1)^{\sum\limits_{j=0}^m\frac{1}{2}(p^j-1)l_j}(-1)^{\sum\limits_{j=0}^mjl_j}=(-1)^{\frac{1}{2}(p-1)\sum\limits_{j=0}^mjl_j}(-1)^{\sum\limits_{j=0}^mjl_j}\\
&=(-1)^{\frac{1}{2}(p+1)\sum\limits_{j=0}^mjl_j}.
\end{split}
\end{equation*}
\end{proof}

\begin{remark}
\normalfont
If we wish to define another integer $M_\lambda^\eta$ such that 
$$
M_\lambda^\eta=(-1)^{\lfloor s_\lambda/2\rfloor}M_{\lambda_0}^{\eta_0}M_{\lambda_1}^{\eta_1}M_{\lambda_2}^{\eta_2}\cdots M_{\lambda_m}^{\eta_m}
$$
with $(-1)^{\frac{1}{2}(p+1)\sum\limits_{j=0}^mjl_j}$ omitted in Lemma~\ref{lemma:4.2}, we may define
$$
n''_j=\left\{\begin{array}{ll}
\left(\frac{-1}{p}\right)^{\nu_p(n_j)}(n_j)_{p'},&\mbox{ if $n_j$ is odd},\\
\left(\frac{-1}{p}\right)^{\nu_p(n_j)}(n_j)_{p'}/(2\eta),&\mbox{ if $n_j$ is even}
\end{array}\right.
$$
and define
$$
M_\lambda^\eta=(-1)^{\lfloor(n-l)/2\rfloor}\prod_{j=1}^ln''_j
$$
for a bar partition $\lambda=(n_1>n_2>\cdots>n_l>0)$. Since $\left(\frac{\left(\frac{-1}{p}\right)}{p}\right)=\left(\frac{-1}{p}\right)$, we have $\left(\frac{M_\lambda^\eta}{p}\right)=\left(\frac{N_\lambda^\eta}{p}\right)$ and under this definition
$$
M_\lambda^\eta=(-1)^{\lfloor s_\lambda/2\rfloor}M_{\lambda_0}^{\eta_0}M_{\lambda_1}^{\eta_1}M_{\lambda_2}^{\eta_2}\cdots M_{\lambda_m}^{\eta_m}.
$$
\end{remark}

\begin{theorem}\label{global}
The irreducible $p$-Brauer characters of $\tilde{S}_n^\eta$ in the spin block $B_\kappa$ can be labeled by bar partitions of $n$ in $\Lambda_\kappa$, i.e., we have a parametrization
$$
\mathrm{IBr}_p(B_\kappa)=\{\beta_\lambda^+,\beta_\lambda^-\mid\lambda\in\mathcal{P}_{n,\textrm{odd}}^{\textrm{str}}\cap\Lambda_\kappa\}\cup\{\beta_\lambda\mid\lambda\in\mathcal{P}_{n,\textrm{even}}^{\textrm{str}}\cap\Lambda_\kappa\}.
$$
for irreducible $p$-Brauer characters in $B_\kappa$ of $\tilde{S}_n^\eta$ such that
\begin{enumerate}
\item If $\lambda\in\mathcal{P}_{n,\textrm{odd}}^{\textrm{str}}\cap\Lambda_\kappa$, then $\beta_\lambda^-=\varepsilon\beta_\lambda^+\neq\beta_\lambda^+$ and $\sigma(\beta_\lambda^+)=\beta_\lambda^{\mu_\lambda}$, and
\item If $\lambda\in\mathcal{P}_{n,\textrm{even}}^{\textrm{str}}\cap\Lambda_\kappa$, then $\mathrm{Res}_{\tilde{A}_n}^{\tilde{S}_n^\eta}\beta_\lambda=\beta'^+_\lambda+\beta'^-_\lambda$ and $\sigma(\beta'^+_\lambda)=\beta'^{\mu_\lambda}_\lambda$.
\end{enumerate}
Here $\mu_\lambda=\left(\frac{N_\lambda^\eta}{p}\right)$ as before.
\end{theorem}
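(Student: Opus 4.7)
My plan combines the $p$-basic set result of Brunat--Gramain \cite{BG} with a unitriangular decomposition matrix argument, and then transfers the Galois information for ordinary characters (Theorems~\ref{symbol_of_irr_+} and \ref{symbol_of_irr_-}) to Brauer characters by induction on the order used in that decomposition.

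First, by \cite{BG} the family $\{\chi_\lambda \mid \lambda \in \mathcal{P}_{n,\textrm{even}}^{\textrm{str}} \cap \Lambda_\kappa\} \cup \{\chi_\lambda, \varepsilon\chi_\lambda \mid \lambda \in \mathcal{P}_{n,\textrm{odd}}^{\textrm{str}} \cap \Lambda_\kappa\}$ restricts to a $\mathbb{Z}$-basis of the generalized Brauer characters in the spin block $B_\kappa$, with a lower-unitriangular decomposition matrix relative to a suitable partial order on $\Lambda_\kappa$. Using this triangularity, I would define $\beta_\lambda$ (resp.~$\beta_\lambda^{\pm}$) as the unique irreducible Brauer constituent of the $p$-modular reduction $\widehat{\chi}_\lambda$ whose coefficient is $1$ and which does not occur in $\widehat{\chi}_{\lambda'}$ for any $\lambda' < \lambda$. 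For $\lambda \in \mathcal{P}_{n,\textrm{odd}}^{\textrm{str}} \cap \Lambda_\kappa$ the inequality $\varepsilon\chi_\lambda \neq \chi_\lambda$ transfers to $\varepsilon \beta_\lambda^+ = \beta_\lambda^- \neq \beta_\lambda^+$; for $\lambda \in \mathcal{P}_{n,\textrm{even}}^{\textrm{str}} \cap \Lambda_\kappa$ the identity $\varepsilon\chi_\lambda = \chi_\lambda$ forces $\varepsilon\beta_\lambda = \beta_\lambda$, whence $\mathrm{Res}_{\tilde{A}_n}^{\tilde{S}_n^\eta}\beta_\lambda = \beta'^+_\lambda + \beta'^-_\lambda$ splits into two irreducible pieces. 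A count of $p$-regular conjugacy classes in $B_\kappa$ then confirms that these characters exhaust $\mathrm{IBr}(B_\kappa)$.

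For the Galois part, $p$-modular reduction commutes with the action of $\sigma$, so the decomposition numbers are $\sigma$-invariant. Theorem~\ref{symbol_of_irr_+} (for $\eta=+1$) or Theorem~\ref{symbol_of_irr_-} (for $\eta=-1$) gives $\sigma(\chi_\lambda^+) = \chi_\lambda^{\mu_\lambda}$ in the non-self-associated case. Writing $\widehat{\chi}_\lambda^+ = \beta_\lambda^+ + \sum_{\lambda'<\lambda} c_{\lambda,\lambda'}\, \beta_{\lambda'}'$, where each $\beta_{\lambda'}'$ denotes an already-handled irreducible Brauer character indexed by $\lambda' < \lambda$, and applying $\sigma$ to both sides, comparison of leading terms together with the induction hypothesis yields $\sigma(\beta_\lambda^+) = \beta_\lambda^{\mu_\lambda}$. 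The self-associated case is handled analogously after restricting to $\tilde{A}_n$: starting from $\sigma(\chi'^+_\lambda) = \chi'^{\mu_\lambda}_\lambda$ and the triangular expansion of $\widehat{\chi}'^+_\lambda$ in terms of the $\beta'^\pm_{\lambda'}$, we extract $\sigma(\beta'^+_\lambda) = \beta'^{\mu_\lambda}_\lambda$.

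The main obstacle will be verifying the triangular form of the decomposition matrix and producing the compatible ordering on $\Lambda_\kappa$; this is precisely the nontrivial input borrowed from \cite{BG}. A secondary bookkeeping issue, comparable to the relabeling step at the end of the proof of Theorem~\ref{symbol_of_irr_+}, is to coherently fix the $\pm$-labeling throughout the induction so that the $\varepsilon$-action, the restriction to $\tilde{A}_n$, and the $\sigma$-action are simultaneously compatible. This is resolved by making the labeling choices inductively as the partitions $\lambda$ are processed in the chosen order.
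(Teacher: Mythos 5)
Your proposal hinges on the claim that the decomposition matrix restricted to the basic set $\Lambda_\kappa$ is lower unitriangular with respect to a suitable partial order, and you attribute this to Brunat--Gramain \cite{BG}. This is a genuine gap: what \cite{BG} establishes is that the characters indexed by $\Lambda_\kappa$ form a $p$-basic set, i.e.\ that the corresponding square submatrix of the decomposition matrix is invertible over $\mathbb{Z}$. Unitriangularity is a strictly stronger structural assertion, and it is neither a formal consequence of having a basic set nor something proved in \cite{BG}. For spin blocks of $\tilde{S}_n^\eta$ and $\tilde{A}_n$ in odd characteristic this is a delicate open-ended question (unlike the situation for $S_n$, where James' theorem delivers dominance-order unitriangularity), so you cannot use it as the first step of the argument. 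Your definition of $\beta_\lambda$ as ``the unique irreducible Brauer constituent of $\widehat{\chi}_\lambda$ with coefficient $1$ not occurring in earlier $\widehat{\chi}_{\lambda'}$'' and the subsequent leading-term comparison both rely on this unproved triangularity.

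The paper avoids this entirely by an indirect counting argument. Working by induction on $n$, it constructs a $(\sigma,\varepsilon)$-equivariant bijection from the ordinary characters with labels \emph{outside} $\Lambda_\kappa$ to the set of Brauer--correspondent pairs $(u,\beta)$ with $u\neq 1$ (using the Humphreys product and the inductive hypothesis for the centralizer quotients), then invokes Brauer's Second Main Theorem and Brauer's Permutation Lemma to deduce that the $\langle\sigma\rangle\times\langle\varepsilon\rangle$-set $\mathrm{IBr}(B_\kappa)$ is permutation-isomorphic to $\mathrm{Irr}_0(B_\kappa)=\{\chi_\lambda\mid\lambda\in\Lambda_\kappa\}$; the desired labeling is then pulled back from the ordinary side via Theorems~\ref{symbol_of_irr_+} and~\ref{symbol_of_irr_-}. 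The only input about decomposition numbers is the basic set itself, not its triangular shape. If you want to salvage your approach you would need either to prove the unitriangularity claim (a substantial undertaking) or to replace the triangularity step with this fixed-point-counting argument.
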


\begin{proof}
It suffices to prove the statement for positive defect spin blocks of $\tilde{S}_n^\eta$. Let $B'_\kappa$ be the block of $\tilde{A}_n$ covered by $B_\kappa$ (see \ref{void:blocks of double covers}). Let $\mathcal{T}$ be a set of conjugacy class representatives of $p$-elements in $\tilde{S}_n^\eta$ such that $1\in\mathcal{T}$, and let $\mathrm{IBr}_p(C_{\tilde{S}_n^\eta}(u),B_\kappa)$ be the set of irreducible $p$-Brauer characters of $C_{\tilde{S}_n^\eta}(u)$ belonging to a block whose Brauer correspondence in $\tilde{S}_n^\eta$ is $B_\kappa$. Let $\mathcal{T}'$ be a set of conjugacy class representatives of $p$-elements in $\tilde{A}_n$ such that $\mathcal{T}\subseteq\mathcal{T}'$, and let $\mathrm{IBr}_p(C_{\tilde{A}_n}(u),B'_\kappa)$ be the set of irreducible $p$-Brauer characters of $C_{\tilde{A}_n}(u)$ belonging to a block whose Brauer correspondence in $\tilde{S}_n^\eta$ is $B'_\kappa$. Set
\begin{equation*}
\begin{split}
\mathfrak{S}_1(\kappa)&=\{(u,\beta)\mid u\in\mathcal{T}-\{1\},\beta\in\mathrm{IBr}_p(C_{\tilde{S}_n^\eta}(u),B_\kappa)\},\\
\mathfrak{S}'_1(\kappa)&=\{(u',\beta')\mid u'\in\mathcal{T}'-\{1\},\beta'\in\mathrm{IBr}_p(C_{\tilde{A}_n}(u),B'_\kappa)\},\\
\mathfrak{S}(\kappa)&=\mathrm{IBr}_p(B_\kappa)\cup\mathfrak{S}_1(\kappa),\\
\mathfrak{S}'(\kappa)&=\mathrm{IBr}_p(B'_\kappa)\cup\mathfrak{S}'_1(\kappa),\\
\mathrm{Irr}_0(B_\kappa)&=\{\chi_\lambda\in\mathrm{Irr}(B_\kappa)\mid\lambda\in\Lambda_\kappa\},\\
\mathrm{Irr}_0(B'_\kappa)&=\{\chi'\in\mathrm{Irr}(B'_\kappa)\mid\mbox{$\chi'$ is covered by a character in }\mathrm{Irr}_0(B_\kappa)\},\\
\mathrm{Irr}_1(B_\kappa)&=\{\chi_\lambda\in\mathrm{Irr}(B_\kappa)\mid\lambda\notin\Lambda_\kappa\},\\
\mathrm{Irr}_1(B'_\kappa)&=\{\chi'\in\mathrm{Irr}(B'_\kappa)\mid\mbox{$\chi'$ is covered by a character in }\mathrm{Irr}_1(B_\kappa)\}.
\end{split}
\end{equation*}

We use induction on $n$. For any strict partition $\lambda$ of $n$, define $n_{jk}$ and $\lambda_j$ as in Lemma~\ref{lemma:4.2} and let $u_\lambda$ be the $p$-element in $\mathcal{T}$ whose image in $S_n$ consists of $|\lambda_j|$ copies of $p^j$-cycles. Let $s_\lambda$ be the number of non-self-associated strict partitions in $\{\lambda_j\mid0\leq j\leq m\}$. Then $s_\lambda\equiv n-l\mod2$. Applying the inductive hypothesis for each $\lambda_j$, we can define a bijection
\begin{eqnarray*}
\mathrm{Irr}_1(B_\kappa)&\to&\mathfrak{S}_1(\kappa),\\
\chi_\lambda&\mapsto&(u_\lambda,\beta_{\lambda_0}*\beta_{\lambda_1}*\cdots*\beta_{\lambda_m}).
\end{eqnarray*}
One checks that $\varepsilon\chi_\lambda=\chi_\lambda$ if and only if $\lambda$ is self-associated, or equivalently, $s_\lambda$ is even. This, in turn, is equivalent to $\varepsilon(\beta_{\lambda_0}*\beta_{\lambda_1}*\cdots*\beta_{\lambda_m})=\beta_{\lambda_0}*\beta_{\lambda_1}*\cdots*\beta_{\lambda_m}$.

Recall that for $\lambda=(n_1>n_2>\cdots>n_l>0)$, we have
$$
N_\lambda^\eta=(-1)^{\lfloor(n-l)/2\rfloor}\cdot\prod_{n_j\mbox{ odd}}(-1)^{\nu_p(n_j)}(n_j)_{p'}\cdot\prod_{n_j\mbox{ even}}\frac{(-1)^{\nu_p(n_j)}(n_j)_{p'}}{2\eta}.
$$
By Lemma \ref{lemma:4.2}, $N_\lambda^\eta=(-1)^{\frac{1}{2}(p+1)\sum\limits_{j=0}^mjl_j}\cdot(-1)^{\lfloor s_\lambda/2\rfloor}N_{\lambda_0}^{\eta_0}N_{\lambda_1}^{\eta_1}N_{\lambda_2}^{\eta_2}\cdots N_{\lambda_m}^{\eta_m}$.  Hence
$$
\left(\frac{N_\lambda^\eta}{p}\right)=\left(\frac{-1}{p}\right)^{\lfloor s_\lambda/2\rfloor}\left(\frac{N_{\lambda_0}^{\eta_0}}{p}\right)\left(\frac{N_{\lambda_1}^{\eta_1}}{p}\right)\left(\frac{N_{\lambda_2}^{\eta_2}}{p}\right)\cdots\left(\frac{N_{\lambda_m}^{\eta_m}}{p}\right).
$$
By Theorem~\ref{symbol_of_Humphreys_product}, we know that the action of $\sigma$ or $\varepsilon$ has the same number of fixed points on $\mathrm{Irr}_1(B_\kappa)$ and $\mathfrak{S}_1(\kappa)$, and the action of $\sigma$ or $\tilde{S}_n^\eta/\tilde{A}_n$ has the same fixed points on $\mathrm{Irr}_1(B'_\kappa)$ and $\mathfrak{S}'_1(\kappa)$.

By Brauer's Permutation Lemma (see \cite[Chapter~3, Lemma~2.18]{NT}) and Brauer's Second Main Theorem (see \cite[Chapter~5, Theorem~4.2]{NT}), the action of $\sigma$ or $\varepsilon$ has the same number of fixed points on $\mathrm{Irr}(B_\kappa)$ and $\mathfrak{S}(\kappa)$, and the action of $\sigma$ or $\tilde{S}_n^\eta/\tilde{A}_n$ has the same fixed points on $\mathrm{Irr}(B'_\kappa)$ and $\mathfrak{S}'(\kappa)$. Hence the action of $\sigma$ or $\varepsilon$ has the same number of fixed points on $\mathrm{Irr}_0(B_\kappa)$ and $\mathrm{IBr}(B_\kappa)$, and the action of $\sigma$ or $\tilde{S}_n^\eta/\tilde{A}_n$ has the same fixed points on $\mathrm{Irr}_0(B'_\kappa)$ and $\mathrm{IBr}(B'_\kappa)$.

The action of $\sigma$ on $\mathrm{Irr}_0(B_\kappa)$ and $\mathrm{Irr}_0(B'_\kappa)$ has been determined in Theorem~\ref{symbol_of_irr_+} and Theorem~\ref{symbol_of_irr_-}. We complete the proof.
\end{proof}

\section{Radical Subgroups of Double Covers of Symmetric Groups}\label{section5}

Let $\eta\in\{\pm1\}$. As before, we denote by $\tilde{S}_n^+$ and $\tilde{S}_n^-$ the double covers of $\tilde{S}_n$ such that
$$
\tilde{S}_n^\eta=\left\langle s_1,\ldots,s_{n-1}\mid s_i^2=\eta,(s_is_{i+1})^3=\eta,[s_i,s_j]=-1\mbox{ if }|i-j|\geq2\right\rangle.
$$

Let $p$ be an odd prime. The $p$-radical subgroups of $S_n$ and their normalizers have been determined in \cite{AF} and \cite{Fon}. The $p$-radical subgroups and weights of $\tilde{S}_n^-$ have been determined in \cite{MO91}. We restated the results and provide the construction of weights for both $\tilde{S}_n^+$ and $\tilde{S}_n^-$. To prove the blockwise Galois Alperin weight conjecture for $\tilde{S}_n^\pm$ and $\tilde{A}_n$, we will determine the action of Galois group on weights in next sections, using the Legendre symbol.

\begin{void}
{\rm	\textbf{Radical Subgroups of $S_n$.} 
Let $c$ be a positive integer. Denote by $R_c$ the elementary abelian $p$-subgroup of order $p^c$ in its left regular representation. For a sequence $\mathbf{c}=(c_1,c_2,\ldots,c_r)$ of positive integers, set
$$
R_{\mathbf{c}}=R_{c_1}\wr R_{c_2}\wr\cdots\wr R_{c_r}.
$$
Let $|\mathbf{c}|=c_1+c_2+\cdots+c_r$. Then $R_{\mathbf{c}}$ is a subgroup of $S_{p^{|\mathbf{c}|}}$. Moreover, let $N_{\mathbf{c}}=N_{S_{p^{|\mathbf{c}|}}}(R_{\mathbf{c}})$ be the normalizer of $R_{\mathbf{c}}$ in $S_{p^{|\mathbf{c}|}}$. Then
$$
N_{\mathbf{c}}/R_{\mathbf{c}}\cong\mathrm{GL}(c_1,p)\times\mathrm{GL}(c_2,p)\times\cdots\times\mathrm{GL}(c_r,p).
$$

The $p$-radical subgroups of $S_n$ have a decomposition
$$
R=I_{n_0}\times R_{\mathbf{c}_1}^{e_1}\times R_{\mathbf{c}_2}^{e_2}\times\cdots\times R_{\mathbf{c}_l}^{e_l}
$$
such that $n=n_0+e_1p^{|\mathbf{c}_1|}+e_2p^{|\mathbf{c}_2|}+\cdots+e_lp^{|\mathbf{c}_l|}$, where $I_{n_0}$ is the trivial subgroup of $S_{n_0}$. Denote by $C=C_{S_n}(R)$ the centralizer of $R$ in $S_n$ and $N=N_{S_n}(R)$ the normalizer of $R$ in $S_n$. Then
$$
RC=S_{n_0}\times R_{\mathbf{c}_1}^{e_1}\times R_{\mathbf{c}_2}^{e_2}\times\cdots\times R_{\mathbf{c}_l}^{e_l}
$$
and
$$
N=S_{n_0}\times N_{\mathbf{c}_1}\wr S_{e_1}\times N_{\mathbf{c}_2}\wr S_{e_2}\times\cdots\times N_{\mathbf{c}_l}\wr S_{e_l}.
$$
}
\end{void}

\begin{void}
{\rm	\textbf{Radical Subgroups of $\tilde{S}_n^\eta$.} 
Let $\tilde{S}_n^\eta\to S_n$ be the natural homomorphism. If $H$ is a subgroup of $S_n$, we denote by $\tilde{H}$ the preimage of $H$ in $\tilde{S}_n^\eta$. If $R$ is a $p$-subgroup of $S_n$, then we have $O_p(\tilde{R})\cong R$ and we view $R$ as a $p$-subgroup of $\tilde{S}_n^\eta$ via this isomorphism. By the structure of $p$-radical subgroups of $S_n$, we know that there is a decomposition
$$
R=I_{n_0}\times R_{\mathbf{c}_1}^{e_1}\times R_{\mathbf{c}_2}^{e_2}\times\cdots\times R_{\mathbf{c}_l}^{e_l}
$$
for $p$-radical subgroups of $\tilde{S}_n^\eta$. Let $C=C_{S_n}(R)$ and $N=N_{S_n}(R)$. Then we have $\tilde{C}=C_{\tilde{S}_n^\eta}(R)$ and $\tilde{N}=N_{\tilde{S}_n^\eta}(R)$. Moreover,
$$
R\tilde{C}=\tilde{S}_{n_0}^\eta\times R_{\mathbf{c}_1}^{e_1}\times R_{\mathbf{c}_2}^{e_2}\times\cdots\times R_{\mathbf{c}_l}^{e_l}
$$
and
$$
\tilde{N}=\tilde{S}_{n_0}^\eta*\widetilde{N_{\mathbf{c}_1}\wr S_{e_1}}*\widetilde{N_{\mathbf{c}_2}\wr S_{e_2}}*\cdots*\widetilde{N_{\mathbf{c}_l}\wr S_{e_l}},
$$
where $*$ is the twisted central product in Section~\ref{Humphreys_product}.
}
\end{void}

\section{Representations of Humphreys Product Components in Local Subgroups}\label{section6}
Recall that if $\mathbf{c}=(c_1,c_2,\ldots,c_r)$ is a finite sequence of positive integers, we have
$$
N_{\mathbf{c}}/R_{\mathbf{c}}=\mathrm{GL}(c_1,p)\times\mathrm{GL}(c_2,p)\times\cdots\times\mathrm{GL}(c_r,p).
$$
Following the argument in \cite[Lemma~2.3]{MO91}, we know that $\mathrm{GL}(c_j,p)=\mathrm{SL}(c_j,p)\rtimes\langle \bar{t}_j\rangle$ with $\bar{t}_j$ being an odd permutation in $S_n$ consisting of $p^{|\mathbf{c}|-1}$ cycles of length $p-1$. Moreover, let $t_j$ be a preimage of $\bar{t}_j$ in $\tilde{S}_n^\eta$. We have
$$
\widetilde{\mathrm{GL}(c_j,p)}=\mathrm{SL}(c_j,p)\rtimes\langle t_j,-1\rangle.
$$
Set $\tilde{T}=\langle t_1,t_2,\ldots,t_r,-1\rangle$ and $N'_{\mathbf{c}}$ be the subgroup of $\tilde{N}_{\mathbf{c}}$ such that
$$
N'_{\mathbf{c}}/R_{\mathbf{c}}=\mathrm{SL}(c_1,p)\times\mathrm{SL}(c_2,p)\times\cdots\times\mathrm{SL}(c_r,p).
$$
Then we have
$$
\tilde{N}_{\mathbf{c}}/R_{\mathbf{c}}=N'_{\mathbf{c}}/R_{\mathbf{c}}\rtimes\tilde{T}=(\mathrm{SL}(c_1,p)\times\mathrm{SL}(c_2,p)\times\cdots\times\mathrm{SL}(c_r,p))\rtimes\tilde{T}.
$$
Note that $N'_{\mathbf{c}}$ is isomorphic to its image in $S_n$ since $-1\notin N'_{\mathbf{c}}$.

\begin{lemma}\label{generator_of_T}
\begin{enumerate}
\item For any $1\leq j\leq r$, we have $t_j^{p-1}=\left(\frac{(-1)^{|\mathbf{c}|}2\eta}{p}\right)$;
\item For any $1\leq j,j'\leq r$, we have $[t_j,t_{j'}]=1$.
\end{enumerate}
In particular, we have
$$
\tilde{T}=\left\langle t_1,t_2,\ldots,t_r,-1\ \middle|\ t_j^{p-1}=\left(\frac{(-1)^{|\mathbf{c}|}2\eta}{p}\right),[t_j,t_{j'}]=1,1\leq j,j'\leq r\right\rangle.
$$
\end{lemma}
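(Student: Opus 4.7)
My plan is to reduce both parts of the lemma to explicit computations in $\tilde{S}_n^\eta$, using the twisted central product relations from Section~\ref{Humphreys_product} and a formula for the $(p-1)$st power of the lift of a single $(p-1)$-cycle.

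\emph{Part (1).} By construction $\bar{t}_j$ is a product of $m := p^{|\mathbf{c}|-1}$ disjoint $(p-1)$-cycles, each of which is an odd permutation since $p-1$ is even. Write $t_j = \tilde{c}_1 \tilde{c}_2 \cdots \tilde{c}_m$ for lifts of these disjoint cycles in $\tilde{S}_n^\eta$. The twisted central product relations give $\tilde{c}_i \tilde{c}_k = -\tilde{c}_k \tilde{c}_i$ for $i \ne k$, so a standard inversion count yields
$$
t_j^{p-1} = (-1)^{\binom{m}{2}\binom{p-1}{2}} \prod_{i=1}^m \tilde{c}_i^{p-1}.
$$
Since $p-1$ is even, each $\tilde{c}_i^{p-1}$ is independent of the sign chosen for the lift; moreover, any two $(p-1)$-cycles in $S_n$ are conjugate and their lifts are $\tilde S_n^\eta$-conjugate up to a central sign, so the central value $\tilde{c}^{p-1}$ depends only on $\eta$ and $p$.

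The key input is the identity
$$
\tilde{c}^{p-1} = \left(\frac{-2\eta}{p}\right)
$$
for any lift of a $(p-1)$-cycle in $\tilde{S}_n^\eta$. I would prove it by taking the canonical Coxeter-type lift $s_{i_1} s_{i_2} \cdots s_{i_{p-2}}$ of a $(p-1)$-cycle and computing its $(p-1)$st power by direct manipulation of the defining relations $s_i^2 = \eta$, $(s_i s_{i+1})^3 = \eta$, $[s_i, s_j] = -1$ for $|i-j| \ge 2$, in the spirit of \cite[Lemma~2.3]{MO91}. Substituting this formula, using that $m$ is odd, and applying the parity identity $\binom{m}{2}\binom{p-1}{2} \equiv (p-1)(|\mathbf{c}|-1)/2 \pmod{2}$ (which follows from $\binom{p-1}{2} \equiv (p-1)/2 \pmod{2}$ together with a direct check on $\binom{p^{|\mathbf{c}|-1}}{2}$), the expression collapses via $\left(\frac{-2\eta}{p}\right) = \left(\frac{-1}{p}\right)\left(\frac{2\eta}{p}\right)$ to
$$
t_j^{p-1} = \left(\frac{-1}{p}\right)^{|\mathbf{c}|} \left(\frac{2\eta}{p}\right) = \left(\frac{(-1)^{|\mathbf{c}|} 2\eta}{p}\right).
$$

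\emph{Part (2).} The images $\bar{t}_j, \bar{t}_{j'}$ lie in distinct direct factors of $N_{\mathbf{c}}/R_{\mathbf{c}} \cong \prod_k \mathrm{GL}(c_k,p)$ and, with the explicit wreath-level choice of representatives from \cite[Lemma~2.3]{MO91}, commute exactly in $S_n$; hence $[t_j, t_{j'}] \in \{\pm 1\}$. To pin down the sign, I would use that conjugation by $t_{j'}$ permutes the $(p-1)$-cycles comprising $\bar{t}_j$: each such cycle is carried to another $(p-1)$-cycle of $\bar{t}_j$, so when the commutator is expanded in terms of the cycle lifts the anti-commutation signs from the twisted central product come in matched pairs and cancel, giving $[t_j, t_{j'}] = 1$.

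The main obstacle is the cycle-lift identity $\tilde{c}^{p-1} = \left(\frac{-2\eta}{p}\right)$: it is the heart of the lemma and requires a careful direct calculation in the spin cover; everything else is bookkeeping with Legendre symbols and parity counts, together with the combinatorial cancellation in part~(2).
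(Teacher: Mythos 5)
Your treatment of part~(1) is essentially the paper's, reorganized slightly. The paper takes the $(p-1)/2$-th power of $t_j$, whose image is a product of $k=\tfrac{1}{2}(p-1)p^{|\mathbf{c}|-1}$ disjoint transpositions, and then squares using $g^2=(-1)^{\binom{k}{2}}\eta^k$; you instead write $t_j=\tilde c_1\cdots\tilde c_m$ as a product of the $m=p^{|\mathbf{c}|-1}$ constituent $(p-1)$-cycle lifts, take the $(p-1)$-th power, and feed in the auxiliary identity $\tilde c^{p-1}=\left(\frac{-2\eta}{p}\right)$. Both routes reduce to the same anticommutation bookkeeping, your parity simplification $\binom{m}{2}\binom{p-1}{2}\equiv\frac{p-1}{2}(|\mathbf{c}|-1)\pmod 2$ is correct, and the remaining single-cycle identity (which you acknowledge but do not carry out) is exactly the kind of power computation the paper performs directly; nothing is wrong, but nothing is really different either.

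Part~(2) has a genuine gap. Writing $t_{j'}^{-1}\tilde c_i t_{j'}=\epsilon_i\tilde c_{\pi(i)}$ with $\epsilon_i\in\{\pm1\}$ and $\pi$ the induced permutation of the constituent $(p-1)$-cycles, one finds
$$[t_j,t_{j'}]=\Bigl(\prod_{i=1}^{m}\epsilon_i\Bigr)\cdot\mathrm{sgn}(\pi),$$
because reordering $\prod_i\tilde c_{\pi(i)}$ back to $\prod_i\tilde c_i$ costs $\mathrm{sgn}(\pi)$ via anticommutation. Your ``matched pairs cancel'' argument speaks, at best, only to $\mathrm{sgn}(\pi)$; the product of the conjugation signs $\epsilon_i$ is not addressed at all, and these signs are not automatically $+1$ (a lift of $\bar c_{\pi(i)}$ is determined only up to a central sign, and which one you get from conjugating $\tilde c_i$ by $t_{j'}$ is a nontrivial feature of the cover). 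Moreover, even the assertion about $\mathrm{sgn}(\pi)$ is not justified: the cycle structure of $\pi$ depends on how $\bar t_{j'}$, coming from a different $\mathrm{GL}(c_{j'},p)$ factor, acts on the orbits of $\bar t_j$ inside the iterated wreath product, and ``comes in matched pairs'' is a hope rather than a computation. The paper avoids both pitfalls by a different decomposition: it writes $t_j=t_{j1}\cdots t_{j,p-2}$ with each $t_{jk}$ a lift of a product of $p^{|\mathbf{c}|-1}$ disjoint transpositions, and evaluates each commutator $[t_{jk},t_{j'k'}]$ by a closed formula for commutators of lifts of commuting involutions, governed by the number of common moved points ($4p^{|\mathbf{c}|-2}$ in this case), then multiplies the resulting $\pm1$'s. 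To fix your argument you would need either to compute $\prod_i\epsilon_i$ and $\mathrm{sgn}(\pi)$ explicitly from the wreath-product action of $\bar t_{j'}$ on the cycles of $\bar t_j$, or to switch to the involution-decomposition and invoke the known commutator formula as the paper does.
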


\begin{proof}
\begin{enumerate}
\item Since $\bar{t}_j$ is an odd permutation consisting of $p^{|\mathbf{c}|-1}$ cycles of length $p-1$, we know that $(\bar{t}_j)^{\frac{p-1}{2}}$ is a permutation consisting of $\frac{1}{2}(p-1)p^{|c|-1}$ two-cycles. Hence we have
\begin{equation*}
\begin{split}
t_j^{p-1}&=\left(t_j^{\frac{p-1}{2}}\right)^2=(-1)^{\frac{1}{2}\cdot\frac{1}{2}(p-1)p^{|\mathbf{c}|-1}\cdot\left(\frac{1}{2}(p-1)p^{|\mathbf{c}|-1}-1\right)}\cdot\eta^{\frac{1}{2}(p-1)p^{|\mathbf{c}|-1}}\\
&=(-1)^{\frac{1}{2}\cdot\frac{1}{2}(p-1)\cdot\left(\frac{1}{2}(p-1)-1\right)}\cdot(-1)^{\frac{1}{2}(p-1)\cdot\frac{1}{2}(p^{|\mathbf{c}|-1}-1)}\cdot\eta^{\frac{1}{2}(p-1)}\\
&=\left(\frac{-2}{p}\right)\cdot\left(\frac{-1}{p}\right)^{|\mathbf{c}|-1}\cdot\left(\frac{\eta}{p}\right)=\left(\frac{(-1)^{|\mathbf{c}|}2\eta}{p}\right).
\end{split}
\end{equation*}
\item Note that a cycle of length $p-1$ can be decomposed as a product of $p-2$ two-cycles. There exists decompositions
\begin{equation*}
\begin{split}
t_j&=t_{j1}t_{j2}\cdots t_{j,p-2},\\
t_{j'}&=t_{j'1}t_{j'2}\cdots t_{j',p-2}
\end{split}
\end{equation*}
such that for any $1\leq k,k'\leq p-2$, the images of $t_{jk}$ and $t_{j'k'}$ in $S_n$ are permutations consisting of $p^{|\mathbf{c}|-1}$ two-cycles, and they have $4(p^{|\mathbf{c}|-2})$ non-fixed points in common. Thus
$$
[t_{jk},t_{j'k'}]=(-1)^{p^{|\mathbf{c}|-2}}\cdot(-1)^{(p^{|\mathbf{c}|-2}(p-2))^2}=1.
$$
and we have $[t_j,t_{j'}]=\prod_{k,k'}[t_{jk},t_{j'k'}]=1$.
\end{enumerate}
\end{proof}

\begin{remark}
{\rm		If $\eta=-1$, the above lemma is exactly \cite[Lemma~2.3~(d)]{MO91}.}
\end{remark}

As before, let $\widetilde{N_{\mathbf{c}}\wr S_e}$ be the preimage of $N_{\mathbf{c}}\wr S_e$ in $\tilde{S}_n^\eta$. Since $N_{\mathbf{c}}\wr S_e=(N_{\mathbf{c}})^e\rtimes S_e$, we have
$$
\widetilde{N_{\mathbf{c}}\wr S_e}=\tilde{N}_{\mathbf{c}}^{*e}\cdot\tilde{S}_e^{\eta'}
$$
for some $\eta'\in\{\pm1\}$. Here $*e$ means the twisted central product of $e$ copies of $\tilde{N_{\mathbf{c}}}$.

\begin{lemma}
Let $s_1,s_2,\cdots,s_{e-1}$ be the generators of $\tilde{S}_e^{\eta'}$ whose images are the simple reflections in $S_e$. Then we have
\begin{equation*}
\begin{split}
s_k^2&=\left(\frac{(-1)^{|\mathbf{c}|}}{p}\right)\eta,\mbox{ for }1\leq k\leq e,\\
(s_ks_{k+1})^3&=\left(\frac{(-1)^{|\mathbf{c}|}}{p}\right)\eta,\mbox{ for }1\leq k\leq e-1,\\
[s_k,s_{k'}]&=-1,\mbox{ for }|k-k'|\geq2,1\leq k,k'\leq e.
\end{split}
\end{equation*}
In particular, we have $\eta'=\left(\frac{(-1)^{|\mathbf{c}|}}{p}\right)\eta$.
\end{lemma}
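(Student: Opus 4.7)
The plan is to realize each $s_k$ as a specific preimage in $\tilde{S}_n^\eta$ of the simple transposition $(k,k+1)\in S_e$, whose image in $S_n$ is the product of $m:=p^{|\mathbf{c}|}$ disjoint transpositions pairing the $i$-th point of block $k$ with the $i$-th point of block $k+1$. Write $s_k=a_1a_2\cdots a_m$ with each $a_i\in\tilde{S}_n^\eta$ a lift of one of these transpositions, so $a_i^2=\eta$ and $a_ia_j=-a_ja_i$ for $i\neq j$. For the commutator relation with $|k-k'|\geq 2$, the images of $s_k$ and $s_{k'}$ have disjoint supports in $S_n$ and each is an odd permutation (since $m$ is odd), so the twisted central product rule from Section~\ref{Humphreys_product} immediately gives $[s_k,s_{k'}]=-1$.

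For $s_k^2$, an induction on $m$ (moving the second copy of $a_1$ leftward past $a_2,\dots,a_m$, collecting a sign $(-1)^{m-1}$ and using $a_1^2=\eta$) reduces to
$$
s_k^2=\eta^m\cdot(-1)^{\binom{m}{2}}.
$$
Since $m=p^{|\mathbf{c}|}$ is odd we have $\eta^m=\eta$; and writing $m-1=(p-1)(1+p+\cdots+p^{|\mathbf{c}|-1})$ together with the observation that the bracketed sum has parity $|\mathbf{c}|$ yields $\binom{m}{2}\equiv\tfrac{p-1}{2}|\mathbf{c}|\pmod 2$. Therefore
$$
s_k^2=\eta\cdot\left(\frac{-1}{p}\right)^{|\mathbf{c}|}=\eta\cdot\left(\frac{(-1)^{|\mathbf{c}|}}{p}\right),
$$
as claimed.

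For the braid relation, I would also write $s_{k+1}=b_1\cdots b_m$ with $b_i$ a lift of the transposition sharing the $i$-th point of block $k+1$ with $a_i$, then shuffle each $b_j$ leftward past $a_{j+1},\dots,a_m$. Setting $c_i:=a_ib_i$, this rearrangement yields $s_ks_{k+1}=(-1)^{\binom{m}{2}}c_1c_2\cdots c_m$. Each $c_i$ is a lift of a 3-cycle, and since the $c_i$ have pairwise disjoint supports with even images in $S_n$, they commute in $\tilde{S}_n^\eta$. Provided the $a_i,b_i$ are chosen so that each $c_i$ matches $s_1s_2$ under the standard embedding $\tilde{S}_3^\eta\hookrightarrow\tilde{S}_n^\eta$ on the relevant triple of points, the Schur relation gives $c_i^3=\eta$, whence
$$
(s_ks_{k+1})^3=(-1)^{\binom{m}{2}}\eta^m=\eta\cdot\left(\frac{(-1)^{|\mathbf{c}|}}{p}\right),
$$
matching $s_k^2$. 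The three identities then coincide with the defining presentation of $\tilde{S}_e^{\eta'}$ for $\eta'=\left(\frac{(-1)^{|\mathbf{c}|}}{p}\right)\eta$, completing the proof.

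The main obstacle is the sign coordination in the braid step: simultaneously ensuring $c_i^3=\eta$ for every $i$. A pair-flip of the lifts $(a_i,a_j)$ (or $(b_i,b_j)$) leaves both $s_k$ and $s_{k+1}$ unchanged while flipping exactly $c_i^3$ and $c_j^3$, giving enough freedom to align all signs. As a cleaner alternative, since the factorization $\widetilde{N_{\mathbf{c}}\wr S_e}=\tilde{N}_{\mathbf{c}}^{*e}\cdot\tilde{S}_e^{\eta'}$ is already at hand from the preceding paragraph, $\eta'$ is determined by the single invariant $s_k^2=\eta'$, so the $s_k^2$ computation alone pins down $\eta'$ and the other two identities follow from the presentation of $\tilde{S}_e^{\eta'}$. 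This is the same device used in the proof of Lemma~\ref{generator_of_T}.
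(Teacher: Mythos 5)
Your computations of $s_k^2$ and $[s_k,s_{k'}]$ are correct and match the paper's, which records them as $(-1)^{\frac12 m(m-1)}\eta^m$ and $(-1)^{m^2}$ with $m=p^{|\mathbf{c}|}$. The subtle point is the braid relation, and there the pair-flip fix you propose does not actually work. Writing $s_ks_{k+1}=(-1)^{\binom m2}c_1\cdots c_m$ with $c_i=a_ib_i$, a simultaneous flip $a_i\mapsto -a_i$, $a_j\mapsto -a_j$ negates exactly two of the $c_i^3$'s, so the product $\prod_i c_i^3$ (which is the very quantity you must evaluate) is invariant and the parity of the set $\{\,i : c_i^3=-\eta\,\}$ is rigid; pair-flips therefore cannot force $c_i^3=\eta$ for every $i$. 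You would need an independent coherence argument, for instance that the triples $(a_i,b_i)$ are simultaneously conjugate in $\tilde S_n^\eta$ to a single model $(s_1,s_2)$, making $c_i^3$ constant in $i$ and equal to $\eta$ by a one-strand check.

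That said, your closing alternative does close the gap and is the cleanest reading: the paper has already asserted, just before the lemma, that $\widetilde{N_{\mathbf{c}}\wr S_e}=\tilde N_{\mathbf{c}}^{*e}\cdot\tilde S_e^{\eta'}$ for some $\eta'\in\{\pm1\}$, so the $s_k$ are honest generators of a copy of $\tilde S_e^{\eta'}$ and all three relations follow from that presentation once $\eta'=s_k^2$ is computed. The paper's own proof instead verifies all three relations by direct sign counts, treating the braid case as the formula $\left((-1)^{\frac12 m(m-1)}\right)^3\eta^m$ without dwelling on the per-strand coherence you correctly flag as the delicate step. Finally, a small misattribution: the shortcut is not ``the same device'' as in Lemma~\ref{generator_of_T}; that lemma is also proved by the direct sign count, not by appealing to a pre-established factorization.
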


\begin{proof}
Note that the image of $s_k$ in $S_n$ consists of $p^{|\mathbf{c}|}$ two-cycles. We have
$$
s_k^2=(-1)^{\frac{1}{2}\cdot p^{|\mathbf{c}|}\cdot(p^{|\mathbf{c}|}-1)}\cdot\eta^{p^{|\mathbf{c}|}}=(-1)^{\frac{1}{2}(p^{|\mathbf{c}|}-1)}\eta=\left(\frac{(-1)^{|\mathbf{c}|}}{p}\right)\eta,
$$
and
$$
(s_ks_{k+1})^3=\left((-1)^{\frac{1}{2}\cdot p^{|\mathbf{c}|}\cdot(p^{|\mathbf{c}|}-1)}\right)^3\cdot\eta^{p^{|\mathbf{c}|}}=\left(\frac{(-1)^{|\mathbf{c}|}}{p}\right)\eta,
$$
and
$$
[s_k,s_{k'}]=(-1)^{p^{|\mathbf{c}|}\cdot p^{|\mathbf{c}|}}=-1.
$$
This completes the proof.
\end{proof}

Note that
$$
\left(\widetilde{N_{\mathbf{c}}\wr S_e}\right)/(N'_{\mathbf{c}})^e\cong(\underbrace{\tilde{T}*\tilde{T}*\cdots*\tilde{T}}_e)\cdot\tilde{S}_e^{\eta'}.
$$
By Lemma~\ref{generator_of_T}, we may choose $\{t_{11},t_{12},\ldots,t_{1r},-1\}$ to be a set of generaters of the first $\tilde{T}$ such that $t_{1j}^{p-1}=\left(\frac{(-1)^{|\mathbf{c}|}2\eta}{p}\right)$ and $[t_{1j},t_{1j'}]=1$, and we define $t_{k+1,j}=(-1)\cdot s_{k}^{-1}t_{kj}s_{k}$ for any $1\leq k\leq e-1$. Then we have
\begin{equation*}
\begin{split}
t_{kj}^{p-1}&=\left(\frac{(-1)^{|\mathbf{c}|}2\eta}{p}\right),\\
[t_{kj},t_{kj'}]&=1,\\
[t_{kj},t_{k'j'}]&=-1\mbox{ for }k'\neq k.
\end{split}
\end{equation*}
In other words, we have
$$
\tilde{T}^{*e}=\left\langle t_{kj},-1\ \middle|\ t_{kj}^{p-1}=\left(\frac{(-1)^{|\mathbf{c}|}2\eta}{p}\right),[t_{kj},t_{k'j'}]=\left\{\begin{array}{rl}
1,&\mbox{ if }k'=k\\
-1,&\mbox{ if }k'\neq k
\end{array}\right.\right\rangle.
$$
Note that $p\nmid|\tilde{T}^{*e}|=2(q-1)^e$. Moreover, the action of $\tilde{S}_e^{\eta'}$ on $\tilde{T}^{*e}$ is defined by
\begin{equation*}
\begin{split}
t_{k+1,j}&=(-1)\cdot s_k^{-1}t_{kj}s_k=(-1)\cdot s_kt_{kj}s_k^{-1},\\
[t_{kj},s_{k'}]&=-1\mbox{ for }k'\neq k-1,k.
\end{split}
\end{equation*}

\begin{lemma}\label{irr_of_Te}
\begin{enumerate}
\item There are $(p-1)^e$ linear characters of $\tilde{T}^{*e}$;
\item If $e$ is even, then $\tilde{T}^{*e}$ has $((p-1)/2)^e$ non-linear irreducible characters, all of which are of degree $2^{e/2}$, nontrivial on $\{\pm1\}$, and vanish outside the center;
\item If $e$ is odd, then $\tilde{T}^{*e}$ has $2((p-1)/2)^e$ non-linear irreducible characters, all of which are of degree $2^{(e-1)/2}$, nontrivial on $\{\pm1\}$, and vanish outside the center.
\end{enumerate}
\end{lemma}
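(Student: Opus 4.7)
The plan is to realize $\tilde{T}^{*e}$ as a two-step nilpotent group and apply the standard representation theory of central extensions of abelian groups by $\{\pm 1\}$. From the presentation, every commutator of generators equals $\pm 1$, and $[t_{1j}, t_{2j}]=-1$ whenever $e\geq 2$, so $[\tilde{T}^{*e}, \tilde{T}^{*e}] = \{\pm 1\}$ (the case $e=1$ is abelian and immediate). Consequently, $\tilde{T}^{*e}$ fits in a central extension $1 \to \{\pm 1\} \to \tilde{T}^{*e} \to A \to 1$ with $A$ abelian, and its irreducibles split cleanly into \emph{linear} characters (trivial on $\{\pm 1\}$, hence lifted from $A$), which gives part (i) by direct count, and \emph{non-linear} characters (sending $-1 \mapsto -1$), which we have to understand.

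For the non-linear irreducibles, the commutator descends to a well-defined bilinear pairing $\omega \colon A \times A \to \{\pm 1\}$, and such irreducibles correspond to projective representations of $A$ with cocycle controlled by $\omega$. The essential calculation is the rank of $\omega$ after reduction modulo $2$: using $[t_{kj}, t_{k'j'}] = -1 \iff k \neq k'$, one sees that $\omega$ factors through the ``column-sum'' map $A \to \mathbb{F}_2^e$, $(a_{kj}) \mapsto (\sum_j a_{kj} \bmod 2)_k$, and on $\mathbb{F}_2^e$ has the explicit shape $\omega(X,Y) = (-1)^{\sum_{k \neq k'} X_k Y_{k'}}$. A short radical computation then shows that this form is nondegenerate of rank $e$ when $e$ is even, and has a one-dimensional radical spanned by $(1,\ldots,1)$ and rank $e-1$ when $e$ is odd.

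Given the rank, classical Heisenberg-type theory yields: each non-linear irreducible has dimension $2^{\mathrm{rank}(\omega)/2}$, i.e.\ $2^{e/2}$ or $2^{(e-1)/2}$ according to the parity of $e$; each is determined by its central character on $Z(\tilde{T}^{*e})$ (whose image in $A$ is the radical of $\omega$); and each vanishes outside $Z(\tilde{T}^{*e})$, since it is induced from a linear character of a maximal abelian subgroup of index equal to the dimension, and the inducing character is stabilized by exactly that subgroup. Counting the central characters of $Z(\tilde{T}^{*e})$ that extend the nontrivial character of $\{\pm 1\}$ matches the stated totals $((p-1)/2)^e$ and $2((p-1)/2)^e$; the identity $\sum_\chi \chi(1)^2 = |\tilde{T}^{*e}|$ provides a convenient consistency check.

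The main obstacle will be the rank calculation for the $\mathbb{F}_2$-reduction of the commutator form, together with the careful bookkeeping to translate that rank into the precise counts of non-linear irreducibles. This is where the dichotomy between $e$ even and $e$ odd is forced, since the diagonal vector $(1,\ldots,1) \in \mathbb{F}_2^e$ lies in the radical of $\omega$ precisely when $e$ is odd; everything else is routine application of Schur--Clifford type arguments and the character orthogonality relations.
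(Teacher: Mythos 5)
Your proposal is correct but takes a genuinely different route from the paper. You observe $[\tilde{T}^{*e},\tilde{T}^{*e}]=\{\pm1\}$, push the commutator form down to an alternating $\mathbb{F}_2$-valued form on $\mathbb{F}_2^e$ via the column-sum map, compute its radical (trivial for $e$ even, spanned by $(1,\ldots,1)$ for $e$ odd), and then invoke the Heisenberg/Stone--von-Neumann dichotomy: for each linear character $\lambda$ of $Z(\tilde{T}^{*e})$ with $\lambda(-1)=-1$, the pairing $\lambda\circ[\cdot,\cdot]$ is the \emph{same} nondegenerate alternating form on $\tilde{T}^{*e}/Z(\tilde{T}^{*e})$ (because every commutator lies in $\{\pm1\}$), hence there is exactly one irreducible with that central character, it has degree $\sqrt{[\tilde{T}^{*e}:Z(\tilde{T}^{*e})]}$, and it vanishes off the center; counting such $\lambda$ gives the stated totals. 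The paper instead argues by pure counting: it computes $|Z(\tilde{T}^{*e})|$ directly, counts conjugacy classes via $|\mathrm{Cl}|=|Z|+\tfrac12(|G|-|Z|)$ to get the number of faithful irreducibles, and then squeezes the degrees between the bound $d_1^2|Z|\leq|G|$ and the sum-of-squares identity to force all degrees to be $2^{e_0}$ and all such characters to vanish off $Z$. Your structural approach buys more: it explains \emph{why} the dichotomy at the parity of $e$ occurs (the diagonal lies in the radical exactly when $e$ is odd), and it leads naturally to the explicit Clifford-algebra/Pauli-matrix model that the paper has to build separately in the subsequent Lemma \ref{rep_of_Te}. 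The paper's approach is shorter and avoids having to state the Heisenberg lemma, but gives less insight. The one place your sketch would need to be firmed up for a written proof is the Heisenberg step itself: you should state explicitly that a maximal isotropic $L/Z$ gives an abelian $L$, that the stabilizer in $\tilde{T}^{*e}$ of any extension of $\lambda$ to $L$ is exactly $L$ (by nondegeneracy), and hence induction is irreducible; and that two such induced characters agree iff they have the same central character. These are routine but are the load-bearing facts behind ``determined by its central character'' and ``vanishes outside the center.''
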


\begin{proof}
\begin{enumerate}
\item This follows directly from the fact that $\tilde{T}^{*e}/\{\pm1\}$ is an abelian group of order $(p-1)^e$.
\item Let $e$ be an even number. Then $|Z(\tilde{T}^{*e})|=2((p-1)/2)^e$, so we have
$$
|\mathrm{Cl}(\tilde{T}^{*e})|=|Z(\tilde{T}^{*e})|+\frac{1}{2}(|\tilde{T}^{*e}|-|Z(\tilde{T}^{*e})|)=(p-1)^e+((p-1)/2)^e.
$$
Hence $\tilde{T}^{*e}$ has $((p-1)/2)^e$ irreducible characters nontrivial on $\{\pm1\}$. Let $d_1\geq d_2\geq\cdots\geq d_{((p-1)/2)^e}$ be the degrees of these irreducible characters. Then we have
$$
d_1^2\cdot|Z(\tilde{T}^{*e})|\leq|\tilde{T}^{*e}|.
$$
Thus $d_1\leq2^{e/2}$, and the equality holds if and only if the corresponding character vanishes outside the center. Moreover, we have
\begin{equation*}
\begin{split}
2(p-1)^e&=|\tilde{T}^{*e}|=(p-1)^e+\sum_{j=1}^{((p-1)/2)^e}d_j^2\\
&\leq(p-1)^e+\sum_{j=1}^{((p-1)/2)^e}d_1^2\\
&\leq(p-1)^e+((p-1)/2)^e\cdot(2^{e/2})^2=2(p-1)^e,
\end{split}
\end{equation*}
so $d_1=d_2=\cdots=d_{((p-1)/2)^e}=2^{e/2}$.
\item Let $e$ be an odd number. Then $|Z(\tilde{T}^{*e})|=4((p-1)/2)^e$, so we have
$$
|\mathrm{Cl}(\tilde{T}^{*e})|=|Z(\tilde{T}^{*e})|+\frac{1}{2}(|\tilde{T}^{*e}|-|Z(\tilde{T}^{*e})|)=(p-1)^e+2((p-1)/2)^e.
$$
Hence $\tilde{T}^{*e}$ has $2((p-1)/2)^e$ irreducible characters nontrivial on $\{\pm1\}$. Let $d_1\geq d_2\geq\cdots\geq d_{2((p-1)/2)^e}$ be the degrees of these irreducible characters. Then we have
$$
d_1^2\cdot|Z(\tilde{T}^{*e})|\leq|\tilde{T}^{*e}|.
$$
Thus $d_1\leq2^{(e-1)/2}$, and the equality holds if and only if the corresponding character vanishes outside the center. Moreover, we have
\begin{equation*}
\begin{split}
2(p-1)^e&=|\tilde{T}^{*e}|=(p-1)^e+\sum_{j=1}^{2((p-1)/2)^e}d_j^2\\
&\leq(p-1)^e+\sum_{j=1}^{2((p-1)/2)^e}d_1^2\\
&\leq(p-1)^e+2((p-1)/2)^e\cdot(2^{(e-1)/2})^2=2(p-1)^e,
\end{split}
\end{equation*}
so $d_1=d_2=\cdots=d_{2((p-1)/2)^e}=2^{(e-1)/2}$.
\end{enumerate}
\end{proof}

Let
$\sigma_x=\left(\begin{array}{cc}
0&1\\
1&0
\end{array}\right),\sigma_y=\left(\begin{array}{cr}
0&-\mathrm{i}\\
\mathrm{i}&0
\end{array}\right),\sigma_z=\left(\begin{array}{cr}
1&0\\
0&-1
\end{array}\right)$
be Pauli matrices. Set $e_0=\lfloor e/2\rfloor$ and
\begin{equation*}
\begin{split}
F_{2k_0-1}&=I_2^{\otimes(e_0-k_0)}\otimes\sigma_x\otimes\sigma_z^{\otimes(k_0-1)},\\
F_{2k_0}&=I_2^{\otimes(e_0-k_0)}\otimes\sigma_y\otimes\sigma_z^{\otimes(k_0-1)},\\
F_{2e_0+1}&=\sigma_z^{\otimes e_0}
\end{split}
\end{equation*}
for any $1\leq k_0\leq e_0$. Then $F_k^2=I_2$ and $[F_k,F_{k'}]=-I_2$ for any $1\leq k<k'\leq 2e_0+1$, and $F_1F_2\cdots F_{2e_0+1}=\mathrm{i}^{e_0}I_{2^{e_0}}$. Moreover, let
$$
E=\underbrace{\sigma_y\otimes\sigma_x\otimes\sigma_y\otimes\sigma_x\otimes\cdots}_{e_0}.
$$
We can check that $E^2=I_2$ and $EF_kE=(-1)^{e_0}(-1)^{k-1}F_k$.

%{\color{red}	Recall that} $\varepsilon\colon\tilde{S}_n\to\{\pm1\}$ denotes the sign character of $\tilde{S}_n$.

\begin{lemma}\label{rep_of_Te}
Assume that $e\geq2$. Let $\alpha_1,\alpha_2,\ldots,\alpha_e\in\mathrm{Irr}(\tilde{T})$ be $e$ irreducible characters of $\tilde{T}$ such that $\alpha_k(-1)=-1$ for all $1\leq k\leq e$. Then the group homomorphism
\begin{eqnarray*}
\Delta_{\alpha_1,\alpha_2,\ldots,\alpha_e}\colon\underbrace{\tilde{T}*\cdots*\tilde{T}}_e&\to&\mathrm{GL}_{2^{e_0}}(\bar{\mathbb{Q}}_p),\\
t_{kj}&\mapsto&\alpha_k(t_j)F_k
\end{eqnarray*}
is an irreducible representation of $\tilde{T}^{*e}$. Let $\psi_{\alpha_1,\alpha_2,\ldots,\alpha_e}$ be the irreducible character of $\Delta_{\alpha_1,\alpha_2,\ldots,\alpha_e}$.
\begin{enumerate}
\item If $e$ is even, then $\psi_{\alpha_1,\alpha_2,\ldots,\alpha_e}=\varepsilon\psi_{\alpha_1,\alpha_2,\ldots,\alpha_e}$ is the only one irreducible character of $\tilde{T}^{*e}$ whose restriction to the $k$-th $\tilde{T}$ is $2^{e_0-1}(\alpha_k+\varepsilon\alpha_k)$;
\item If $e$ is odd, then $\psi_{\alpha_1,\alpha_2,\ldots,\alpha_e}\neq\varepsilon\psi_{\alpha_1,\alpha_2,\ldots,\alpha_e}$ are the only two irreducible characters of $\tilde{T}^{*e}$ whose restrictions to the $k$-th $\tilde{T}$ are $2^{e_0-1}(\alpha_k+\varepsilon\alpha_k)$.
\end{enumerate}
In particular, all non-linear irreducible representations of $\tilde{T}^{*e}$ can be constructed in this way. For other irreducible characters $\alpha'_1,\alpha'_2,\ldots,\alpha'_e\in\mathrm{Irr}(\tilde{T})$ such that $\alpha'_k(-1)=-1$, we have $\psi_{\alpha'_1,\alpha'_2,\ldots,\alpha'_e}=\psi_{\alpha_1,\alpha_2,\ldots,\alpha_e}$ or $\varepsilon\psi_{\alpha_1,\alpha_2,\ldots,\alpha_e}$ if and only if $\alpha'_k=\alpha_k$ or $\varepsilon\alpha_k$ for all $1\leq k\leq e$.
\end{lemma}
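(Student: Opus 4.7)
My plan is to verify $\Delta_{\alpha_1,\ldots,\alpha_e}$ is a well-defined irreducible representation, compute its restriction to each copy of $\tilde{T}$ by a direct Clifford trace computation, and match the resulting family against Lemma~\ref{irr_of_Te} to settle completeness and uniqueness. Setting $\Delta_{\alpha_1,\ldots,\alpha_e}(-1) = -I_{2^{e_0}}$, the relation $t_{kj}^{p-1} = \left(\frac{(-1)^{|\mathbf{c}|}2\eta}{p}\right)$ is preserved because $F_k^2 = I_2$ together with $p-1$ even gives $F_k^{p-1} = I_2$, while $\alpha_k(t_j)^{p-1} = \alpha_k(t_j^{p-1})$ agrees with the Legendre symbol (using $\alpha_k(-1) = -1$); the commutator relations $[t_{kj},t_{kj'}] = 1$ and $[t_{kj},t_{k'j'}] = -1$ for $k\neq k'$ transfer directly from $[F_k,F_k] = I_2$ and $[F_k,F_{k'}] = -I_2$. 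For irreducibility, the relations among $F_1,\ldots,F_e$ are those of a Clifford algebra: in the even case $e = 2e_0$ they already span $M_{2^{e_0}}(\bar{\mathbb{Q}}_p)$, and in the odd case $e = 2e_0+1$ the scalar identity $F_1 F_2 \cdots F_{2e_0+1} = \mathrm{i}^{e_0} I_{2^{e_0}}$ expresses $F_{2e_0+1}$ in terms of the first $2e_0$, so the algebra is still the full matrix algebra and $\Delta_{\alpha_1,\ldots,\alpha_e}$ is absolutely irreducible of degree $2^{e_0}$.

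Next I would compute the character. For $x = \prod_j t_{kj}^{a_j}$ in the $k$-th $\tilde{T}$, we have $\Delta_{\alpha_1,\ldots,\alpha_e}(x) = \alpha_k\bigl(\prod_j t_j^{a_j}\bigr) F_k^{\sum_j a_j}$. Since $e \geq 2$ forces $\mathrm{tr}(F_k) = 0$ for every $k$ (for $k \leq 2e_0$ by presence of a $\sigma_x$ or $\sigma_y$ tensor factor, and for $k = 2e_0+1$ by $\mathrm{tr}(\sigma_z^{\otimes e_0}) = 0$ when $e_0 \geq 1$), the trace equals $2^{e_0}\alpha_k(\prod_j t_j^{a_j})$ when $\sum_j a_j$ is even and vanishes otherwise. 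Combined with $\varepsilon(t_{kj}) = -1$ (since $\bar{t}_j$ is an odd permutation), this matches $2^{e_0-1}(\alpha_k + \varepsilon\alpha_k)$ on the $k$-th $\tilde{T}$. A parallel trace computation on arbitrary words $\prod_{k,j} t_{kj}^{a_{kj}}$, using that any Clifford monomial in a strict subset of $\{F_1,\ldots,F_e\}$ anticommutes with some absent $F_k$ and hence has trace zero, shows that $\psi_{\alpha_1,\ldots,\alpha_e}$ is supported on $Z(\tilde{T}^{*e})$. Consequently $\psi = \varepsilon\psi$ iff $\varepsilon$ is trivial on the center, which holds precisely when $e$ is even; for odd $e$ the central element $t_{11}t_{21}\cdots t_{e1}$ has $\varepsilon$-value $-1$ but nonzero $\psi$-value $2^{e_0}\mathrm{i}^{e_0}\prod_k \alpha_k(t_1)$, so $\psi \neq \varepsilon\psi$.

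Finally, the restriction to the $k$-th $\tilde{T}$ determines the unordered pair $\{\alpha_k,\varepsilon\alpha_k\}$, giving the uniqueness statement; combined with a count of equivalence classes of admissible tuples, which matches the enumeration of non-linear irreducibles in Lemma~\ref{irr_of_Te} (one character per class for even $e$, a $\{\psi,\varepsilon\psi\}$-pair for odd $e$), this yields completeness. The main obstacle I anticipate is the Clifford trace-vanishing analysis that pins down the support of $\psi_{\alpha_1,\ldots,\alpha_e}$ to the center: this requires a careful case split on the parities $b_k = \sum_j a_{kj} \bmod 2$, together with the observation that in the odd-$e$ case the configuration where all $b_k$ equal $1$ contributes a nonzero scalar trace via $F_1 \cdots F_e = \mathrm{i}^{e_0} I_{2^{e_0}}$, and it is precisely this that distinguishes the even- and odd-parity cases of the lemma.
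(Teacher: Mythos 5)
Your proposal is correct and its overall architecture matches the paper's: verify the defining relations of $\tilde{T}^{*e}$ are respected, establish irreducibility, compute the restriction to each $\tilde{T}$ factor, and then invoke the count from Lemma~\ref{irr_of_Te} to deduce completeness and the uniqueness clauses. The one place you genuinely diverge is the irreducibility step. You argue that the matrices $F_1,\dots,F_e$ are Clifford generators whose images span all of $M_{2^{e_0}}(\bar{\mathbb{Q}}_p)$ (in the odd case using $F_1\cdots F_{2e_0+1}=\mathrm{i}^{e_0}I$ to absorb the last generator), so $\Delta$ hits the whole matrix algebra. The paper instead piggybacks on Lemma~\ref{irr_of_Te}: since $\Delta(-1)=-I_{2^{e_0}}$, every constituent is nontrivial on $\{\pm1\}$ and hence, by the degree count already proved, must have degree $2^{e_0}$ itself; as $\deg\Delta=2^{e_0}$, it must be irreducible. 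Your route is self-contained and does not lean on the earlier lemma at that moment; the paper's is shorter given that Lemma~\ref{irr_of_Te} is already available. Your trace-vanishing analysis of the support of $\psi_{\alpha_1,\dots,\alpha_e}$ is correct but redundant for the lemma as stated: the paper reads off the restriction from the eigenvalues of $F_k$ ($2^{e_0-1}$ each of $\pm1$) without needing to pin down the support explicitly, and the fact that the characters vanish off the center is already established in Lemma~\ref{irr_of_Te}. One care point: in the completeness argument for odd $e$ you should make explicit that distinct equivalence classes of tuples (under $\alpha_k\mapsto\varepsilon\alpha_k$) have distinct restriction data on the $\tilde{T}$-factors, so the map from the $((p-1)/2)^e$ classes into $\{\psi,\varepsilon\psi\}$-pairs is injective; combined with the count $2((p-1)/2)^e$ of non-linear irreducibles this forces exactly two characters per fiber. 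You have the ingredients but it is worth stating the injectivity step rather than only the matching cardinalities.
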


\begin{proof}
We can check that for every $1\leq j,j'\leq r$ and $1\leq k<k'\leq e$, we have
\begin{equation*}
\begin{split}
\Delta_{\alpha_1,\alpha_2,\ldots,\alpha_e}(t_{kj})^{p-1}&=\alpha_k(t_j^{p-1})I_{2^{e_0}},\\
[\Delta_{\alpha_1,\alpha_2,\ldots,\alpha_e}(t_{kj}),\Delta_{\alpha_1,\alpha_2,\ldots,\alpha_e}(t_{kj'})]&=I_{2^{e_0}},\\
[\Delta_{\alpha_1,\alpha_2,\ldots,\alpha_e}(t_{kj}),\Delta_{\alpha_1,\alpha_2,\ldots,\alpha_e}(t_{k'j'})]&=-I_{2^{e_0}}.
\end{split}
\end{equation*}
Hence $\Delta_{\alpha_1,\alpha_2,\ldots,\alpha_e}$ is a well-defined group representation. Moreover, we have
$$
\Delta_{\alpha_1,\alpha_2,\ldots,\alpha_e}(-1)=-I_{2^{e_0}}.
$$
Since all irreducible characters of $\tilde{T}^{*e}$ which are nontrivial on $\{\pm1\}$ are of degree $2^{e_0}$ by Lemma~\ref{irr_of_Te}, we know that $\Delta_{\alpha_1,\alpha_2,\ldots,\alpha_e}$ is an irreducible representation of $\tilde{T}^{*e}$. Moreover, since the eigenvalues of $F_k$ are $2^{e_0-1}$ copies of $1$ and $2^{e_0-1}$ copies of $-1$, the restriction of $\psi_{\alpha_1,\alpha_2,\ldots,\alpha_e}$ to the $k$-th $\tilde{T}$ is $2^{e_0-1}(\alpha_k+\varepsilon\alpha_k)$. In particular, we know that $\psi_{\alpha'_1,\alpha'_2,\ldots,\alpha'_e}\notin\{\psi_{\alpha_1,\alpha_2,\ldots,\alpha_e},\varepsilon\psi_{\alpha_1,\alpha_2,\ldots,\alpha_e}\}$ if $\alpha'_{k_0}\notin\{\alpha_{k_0},\varepsilon\alpha_{k_0}\}$ for some $1\leq k_0\leq e$, by comparing their restrictions to the $k_0$-th $\tilde{T}$.
\begin{enumerate}
\item If $e$ is even, then we have constructed all $((p-1)/2)^e$ irreducible characters which are nontrivial on $\{\pm1\}$. In particular, we have
$$
\psi_{\alpha_1,\ldots,\alpha_k,\ldots,\alpha_e}=\psi_{\alpha_1,\ldots,\varepsilon\alpha_k,\ldots,\alpha_e}=\varepsilon\psi_{\alpha_1,\ldots,\alpha_k,\ldots,\alpha_e}.
$$
\item If $e$ is odd, then $t_{1j_1}t_{2j_2}\cdots t_{ej_e}\in Z(\tilde{T}^{*e})$ is a central element for any $1\leq j_1,j_2,\ldots,j_e\leq r$. Moreover, we have
\begin{equation*}
\begin{split}
\Delta_{\alpha_1,\alpha_2,\ldots,\alpha_e}(t_{1j_1}t_{2j_2}\cdots t_{ej_e})&=\alpha_1(t_{1j_1})F_1\cdot\alpha_2(t_{2j_2})F_2\cdot\cdots\cdot\alpha_e(t_{ej_e})F_e\\
&=\mathrm{i}^{e_0}\left(\prod_{k=1}^e\alpha_k(t_{j_k})\right)I_{2^{e_0}}.
\end{split}
\end{equation*}
Thus we have $\psi_{\alpha_1,\ldots,\alpha_e}(t_{1j_1}t_{2j_2}\cdots t_{ej_e})=(2\mathrm{i})^{e_0}\prod\limits_{k=1}^e\alpha_k(t_{kj})\neq0$ and
\begin{equation*}
\begin{split}
&\psi_{\alpha_1,\ldots,\varepsilon\alpha_k,\ldots,\alpha_e}(t_{1j_1}t_{2j_2}\cdots t_{ej_e})=\varepsilon\psi_{\alpha_1,\ldots,\alpha_k,\ldots,\alpha_e}(t_{1j_1}t_{2j_2}\cdots t_{ej_e})\\
=&-\psi_{\alpha_1,\ldots,\alpha_k,\ldots,\alpha_e}(t_{1j_1}t_{2j_2}\cdots t_{ej_e})=-(2\mathrm{i})^{e_0}\left(\prod_{k=1}^e\alpha_k(t_{j_k})\right),
\end{split}
\end{equation*}
so we have constructed all $2((p-1)/2)^e$ irreducible characters nontrivial on $\{\pm1\}$. In particular, we have
$$
\psi_{\alpha_1,\ldots,\varepsilon\alpha_k,\ldots,\alpha_e}=\varepsilon\psi_{\alpha_1,\ldots,\alpha_k,\ldots,\alpha_e}\neq\psi_{\alpha_1,\ldots,\alpha_k,\ldots,\alpha_e}
$$
and both of them have the same restrictions on each $\tilde{T}$.
\end{enumerate}
\end{proof}

\begin{proposition}
Let $\alpha\in\mathrm{Irr}(\tilde{T})$ be an irreducible character of $\tilde{T}$ such that $\alpha(-1)=-1$. Then the irreducible representation $\Delta_{\alpha,\alpha,\ldots,\alpha}$ of $\tilde{T}^{*e}$ can be extended to an irreducible representation
\begin{eqnarray*}
\Delta_\alpha^+\colon\tilde{T}^{*e}\cdot\tilde{S}_e^{\eta'}&\to&\mathrm{GL}_{2^{e_0}}(\bar{\mathbb{Q}}_p),\\
t_{kj}&\mapsto&\alpha(t_j)F_k,\\
s_k&\mapsto&\frac{1}{\sqrt{2\eta'}}(F_k-F_{k+1})
\end{eqnarray*}
of $\left(\widetilde{N_{\mathbf{c}}\wr S_e}\right)/(N'_{\mathbf{c}})^e\cong\tilde{T}^{*e}\cdot\tilde{S}_e^{\eta'}$.
\end{proposition}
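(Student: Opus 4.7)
The plan is to verify that the proposed assignment defines a well-defined group homomorphism on $\tilde{T}^{*e}\cdot\tilde{S}_e^{\eta'}$; once this is established, irreducibility of $\Delta_\alpha^+$ follows automatically, since its restriction to $\tilde{T}^{*e}$ is the already irreducible representation $\Delta_{\alpha,\alpha,\ldots,\alpha}$ of the same dimension by Lemma~\ref{rep_of_Te}. The relations internal to $\tilde{T}^{*e}$ are verified there, so only the relations involving the generators $s_k$ remain: the Coxeter-type relations $s_k^2=\eta'$, $(s_ks_{k+1})^3=\eta'$, and $[s_k,s_{k'}]=-1$ for $|k-k'|\geq 2$, together with the two conjugation rules $s_k^{-1}t_{kj}s_k=-t_{k+1,j}$ and $[t_{kj},s_{k'}]=-1$ for $k'\notin\{k-1,k\}$.

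The common mechanism driving all computations is the Clifford structure $F_k^2=I$ and $F_kF_{k'}=-F_{k'}F_k$ for $k\neq k'$. Setting $u_k=F_k-F_{k+1}$ so that $\Delta_\alpha^+(s_k)=u_k/\sqrt{2\eta'}$, one immediately gets $u_k^2=F_k^2+F_{k+1}^2=2I$, hence $\Delta_\alpha^+(s_k)^2=\eta'I$. When $|k-k'|\geq 2$, every Pauli factor in $u_k$ anticommutes with every factor in $u_{k'}$, giving $u_ku_{k'}=-u_{k'}u_k$ and therefore $[\Delta_\alpha^+(s_k),\Delta_\alpha^+(s_{k'})]=-I$. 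For the conjugation rules, I would use $\Delta_\alpha^+(s_k)^{-1}=\Delta_\alpha^+(s_k)/\eta'$ together with the identities $u_kF_ku_k=-2F_{k+1}$ (for the first rule) and $u_{k'}F_ku_{k'}=-F_ku_{k'}^2=-2F_k$ when $k'\notin\{k-1,k\}$ (for the second), each of which is a short manipulation with anticommutators; these yield $\Delta_\alpha^+(s_k)^{-1}\Delta_\alpha^+(t_{kj})\Delta_\alpha^+(s_k)=-\alpha(t_j)F_{k+1}=-\Delta_\alpha^+(t_{k+1,j})$ and $\Delta_\alpha^+(s_{k'})^{-1}\Delta_\alpha^+(t_{kj})\Delta_\alpha^+(s_{k'})=-\Delta_\alpha^+(t_{kj})$, respectively.

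The main obstacle I anticipate is the braid relation $(s_ks_{k+1})^3=\eta'$, which is the only one that does not reduce to a linear or quadratic identity. My plan here is to first establish the key intermediate formula $u_ku_{k+1}+u_{k+1}u_k=-2I$, obtained by expanding the anticommutator using $F_{k+1}^2=I$ together with the anticommutation of $F_k$ with $F_{k+2}$. This identity makes $u_ku_{k+1}$ satisfy $(u_ku_{k+1})^2=u_k(-u_ku_{k+1}-2I)u_{k+1}=-4I-2u_ku_{k+1}$, whence a second application yields $(u_ku_{k+1})^3=8I$. Dividing by $(2\eta')^3=8\eta'$ produces $\Delta_\alpha^+(s_ks_{k+1})^3=\eta'I$, completing the verification of all defining relations.
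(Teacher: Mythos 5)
Your proposal is correct and fills in, in complete detail, exactly the verification that the paper leaves implicit behind the words ``we can check that.'' The Clifford-algebra identities you use ($u_k^2=2I$, $u_ku_{k'}=-u_{k'}u_k$ for $|k-k'|\geq 2$, $u_kF_ku_k=-2F_{k+1}$, and $u_ku_{k+1}+u_{k+1}u_k=-2I$ leading to $(u_ku_{k+1})^3=8I$) are all accurate, and combined with the irreducibility argument by restriction to $\tilde T^{*e}$, this is precisely the paper's approach carried out explicitly.
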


\begin{proof}
We can check that
\begin{equation*}
\begin{split}
\Delta_\alpha^+(t_{kj})&=\Delta_{\alpha,\alpha,\ldots,\alpha}(t_{kj}),\\
\Delta_\alpha^+(s_k)^2&=\eta'I_{2^{e_0}},\\
(\Delta_\alpha^+(s_k)\Delta_\alpha^+(s_{k+1}))^3&=\eta'I_{2^{e_0}},\\
[\Delta_\alpha^+(s_k),\Delta_\alpha^+(s_{k'})]&=-I_{2^{e_0}}\mbox{ if }|k-k'|\geq2,\\
\Delta_\alpha^+(t_{k+1,j})&=-\Delta_\alpha^+(s_k)^{-1}\Delta_\alpha^+(t_{kj})\Delta_\alpha^+(s_k),\\
[\Delta_\alpha^+(t_{kj}),\Delta_\alpha^+(s_{k'})]&=-I_{2^{e_0}}\mbox{ if }k'\neq k-1,k.
\end{split}
\end{equation*}
This completes the proof.
\end{proof}

\begin{notation}
{\rm For the rest of the paper we let $t$ be a positive integer which is not divisible by $p$ such that for every finite group $G$ below, we have $4|G|_{p'}\mid t$. Let $\omega$ be a primitive $t$-th root of unity in $\bar{\mathbb{Q}}_p$. Let $\sigma$ be the field automorphism of $\Q(\omega)$ such that $\sigma(\omega)=\omega^p$. 
}
\end{notation}

Recall that $\eta'=\left(\frac{(-1)^{|\mathbf{c}|}}{p}\right)\eta$ and $t_j^{p-1}=\left(\frac{(-1)^{|\mathbf{c}|}2\eta}{p}\right)$. We have
\begin{equation*}
\begin{split}
\sigma(\alpha(t_j))&=\left(\frac{(-1)^{|\mathbf{c}|}2\eta}{p}\right)\alpha(t_j),\\
\sigma(\sqrt{2\eta'})&=\left(\frac{(-1)^{|\mathbf{c}|}2\eta}{p}\right)\sqrt{2\eta'},\\
\sigma(F_{2k_0-1})&=F_{2k_0-1},\\
\sigma(F_{2k_0})&=\left(\frac{-1}{p}\right)F_{2k_0}.
\end{split}
\end{equation*}
Define another representation $\Delta_\alpha^-\colon\tilde{T}^{*e}\cdot\tilde{S}_e^{\eta'}\to\mathrm{GL}_{2^{e_0}}(\bar{\mathbb{Q}}_p)$ by $\Delta_\alpha^-=\varepsilon\Delta_\alpha^+$.

\begin{proposition}
The irreducible representation $\Delta_\alpha^+$ is equivalent to $\Delta_\alpha^-$ if and only if $e$ is even.
\end{proposition}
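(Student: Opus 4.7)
The plan is to handle the two directions separately, in both cases using the restriction to $\tilde{T}^{*e}$ together with the Clifford-type anticommutation structure of $F_1,\dots,F_{2e_0+1}$.

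For the ``only if'' direction, I will restrict $\Delta_\alpha^+$ and $\Delta_\alpha^-=\varepsilon\Delta_\alpha^+$ to $\tilde{T}^{*e}$. By construction $\Delta_\alpha^+\vert_{\tilde{T}^{*e}}=\Delta_{\alpha,\alpha,\ldots,\alpha}$, with character $\psi_{\alpha,\alpha,\ldots,\alpha}$, so $\Delta_\alpha^-\vert_{\tilde{T}^{*e}}$ affords $\varepsilon\psi_{\alpha,\alpha,\ldots,\alpha}$. By Lemma~\ref{rep_of_Te}(2), when $e$ is odd one has $\psi_{\alpha,\alpha,\ldots,\alpha}\neq\varepsilon\psi_{\alpha,\alpha,\ldots,\alpha}$, so the restrictions are already inequivalent irreducible representations of $\tilde{T}^{*e}$, forcing $\Delta_\alpha^+\not\sim\Delta_\alpha^-$.

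For the ``if'' direction, assume $e=2e_0$. A short sign bookkeeping (each $\bar t_j$ is an odd permutation, and the image of $s_k$ in $S_n$ consists of $p^{|\mathbf{c}|}$ transpositions with $p$ odd) gives $\varepsilon(t_{kj})=\varepsilon(s_k)=-1$, so equivalence of $\Delta_\alpha^+$ and $\Delta_\alpha^-$ amounts to finding an invertible $A$ with $AF_kA^{-1}=-F_k$ for $1\le k\le e$; the relation $A(F_k-F_{k+1})A^{-1}=-(F_k-F_{k+1})$ is then automatic. I propose $A=F_{2e_0+1}=\sigma_z^{\otimes e_0}$. From the stated identities $F_kF_{k'}=-F_{k'}F_k$ for $k\neq k'$ and $F_{2e_0+1}^2=I_{2^{e_0}}$, this $A$ indeed anticommutes with each of $F_1,\dots,F_{2e_0}$, giving the required intertwiner.

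The main obstacle is really only the sign bookkeeping for $\varepsilon$ on the generators; the intertwiner itself is dictated by the Clifford structure of the $F_k$. It is worth noting that the obstruction when $e$ is odd has a direct matrix-theoretic counterpart: since $F_1F_2\cdots F_{2e_0+1}=\mathrm{i}^{e_0}I_{2^{e_0}}$ is a nonzero scalar, any $A$ anticommuting with each $F_k$ for $1\le k\le 2e_0+1$ would conjugate this scalar to its negative, which is impossible. This reproduces the Lemma~\ref{rep_of_Te} obstruction at the level of matrices and explains precisely why the extra generator $F_{2e_0+1}$ becomes available exactly in the even case.
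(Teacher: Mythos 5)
Your proof is correct and takes essentially the same approach as the paper: the even case uses the identical intertwiner $F_{2e_0+1}$, and the odd case invokes Lemma~\ref{rep_of_Te}(2), whose own proof is precisely the central-element computation the paper performs directly. The closing remark about the scalar $F_1\cdots F_{2e_0+1}=\mathrm{i}^{e_0}I_{2^{e_0}}$ obstructing an anticommuting intertwiner is a nice matrix-level reformulation of the same obstruction.
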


\begin{proof}
\begin{enumerate}
\item If $e=2e_0$ is even, we can check that
\begin{equation*}
\begin{split}
\Delta_\alpha^-(t_{kj})&=F_{2e_0+1}\Delta_\alpha^+(t_{kj})F_{2e_0+1},\\
\Delta_\alpha^-(s_k)&=F_{2e_0+1}\Delta_\alpha^+(s_k)F_{2e_0+1},
\end{split}
\end{equation*}
so we have $\Delta_\alpha^-=F_{2e_0+1}\Delta_\alpha^+F_{2e_0+1}$.
\item If $e=2e_0+1$ is odd, we can check that
\begin{equation*}
\begin{split}
\Delta_\alpha^+(t_{1j}t_{2j}\cdots t_{ej})&=\mathrm{i}^{e_0}\alpha(t_j)^eI_{2^{e_0}},\\
\Delta_\alpha^-(t_{1j}t_{2j}\cdots t_{ej})&=-\mathrm{i}^{e_0}\alpha(t_j)^eI_{2^{e_0}},
\end{split}
\end{equation*}
which are not conjugate.
\end{enumerate}
\end{proof}

\begin{proposition}\label{symbol_for_odd}
The representation $\sigma(\Delta_\alpha^+)$ is equivalent to $\Delta_\alpha^{\mu'}$, where
$$
{\mu'}=\left(\frac{(-1)^{|\mathbf{c}|+e_0}2\eta}{p}\right).
$$
In particular, $\sigma(\Delta_\alpha^+)$ is equivalent to $\Delta_\alpha^+$ if $e$ is even.
\end{proposition}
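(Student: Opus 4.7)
The plan is to apply $\sigma$ entrywise to the generating matrices and then exhibit an explicit intertwiner identifying $\sigma(\Delta_\alpha^+)$ with $\Delta_\alpha^{\mu'}$. Write $\epsilon=\left(\frac{(-1)^{|\mathbf{c}|}2\eta}{p}\right)$ and $\tau=\left(\frac{-1}{p}\right)$, so that $\mu'=\epsilon\tau^{e_0}$. Using the four displayed formulas immediately above the proposition, one reads off
\[
\sigma(\Delta_\alpha^+(t_{kj}))=\epsilon\,\alpha(t_j)\sigma(F_k),\qquad \sigma(\Delta_\alpha^+(s_k))=\frac{\epsilon}{\sqrt{2\eta'}}\bigl(\sigma(F_k)-\sigma(F_{k+1})\bigr),
\]
with $\sigma(F_k)=F_k$ for $k$ odd and $\sigma(F_k)=\tau F_k$ for $k$ even.

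The key reduction is that it suffices to find a matrix $U$ with $U\sigma(F_k)U^{-1}=\tau^{e_0}F_k$ for every $k$: conjugating $\sigma(\Delta_\alpha^+)$ by such a $U$ then rescales each $F_k$ by the common constant $\tau^{e_0}$, which combined with the factor $\epsilon$ already present matches $\Delta_\alpha^{\mu'}$ on both families of generators (for the $s_k$ relation, the identity $\tau^2=1$ absorbs any parity discrepancy, so the computation is completely uniform once $\sigma(F_k)$ has been normalised).

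For $\tau=1$ take $U=I$. For $\tau=-1$ take $U=E$, the matrix defined just before Lemma~\ref{rep_of_Te}; the identity $EF_kE=(-1)^{e_0}(-1)^{k-1}F_k$ recorded there, combined with $E^2=I$, gives $E\sigma(F_k)E^{-1}=(-1)^{[k\text{ even}]}\cdot(-1)^{e_0}(-1)^{k-1}F_k=(-1)^{e_0}F_k=\tau^{e_0}F_k$ uniformly in $k$, so $E$ has precisely the required property. The ``in particular'' statement then follows from the preceding proposition: when $e$ is even, $\Delta_\alpha^+\sim\Delta_\alpha^-=\varepsilon\Delta_\alpha^+$ via conjugation by $F_{2e_0+1}$, so the two possible signs of $\mu'$ yield equivalent targets, and hence $\sigma(\Delta_\alpha^+)\sim\Delta_\alpha^+$ regardless of the sign.

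No genuine obstacle arises; the only care needed is parity bookkeeping to unify the two possibilities $\sigma(F_k)=F_k$ and $\sigma(F_k)=\tau F_k$ into a single common rescaling. This is accomplished effortlessly because the paper has already exhibited the operator $E$ with exactly the sign-flipping property required in the harder case $\tau=-1$, and the case $\tau=1$ needs no conjugation at all.
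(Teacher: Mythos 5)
Your proof is correct and follows essentially the same approach as the paper: both split on the sign of $\left(\frac{-1}{p}\right)$, take the intertwiner to be the identity when this sign is $+1$, and use the matrix $E$ (together with the identity $EF_kE=(-1)^{e_0}(-1)^{k-1}F_k$) when the sign is $-1$. Your reduction to finding a single $U$ with $U\sigma(F_k)U^{-1}=\tau^{e_0}F_k$ uniformly in $k$ is a clean way to package the case split, but the underlying mechanism is identical.
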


\begin{proof}
\begin{enumerate}
\item If $p\equiv1\mod4$, then we have $\sigma(F_k)=F_k$ and $\left(\frac{-1}{p}\right)=1$. Hence
\begin{equation*}
\begin{split}
\sigma(\Delta_\alpha^+(t_{kj}))&=\left(\frac{(-1)^{|\mathbf{c}|}2\eta}{p}\right)\Delta_\alpha^+(t_{kj}),\\
\sigma(\Delta_\alpha^+(s_k))&=\left(\frac{(-1)^{|\mathbf{c}|}2\eta}{p}\right)\Delta_\alpha^+(s_k),
\end{split}
\end{equation*}
so we have $\sigma(\Delta_\alpha^+)=\Delta_\alpha^{\mu'}$ with
$$
{\mu'}=\left(\frac{(-1)^{|\mathbf{c}|}2\eta}{p}\right)=\left(\frac{(-1)^{|\mathbf{c}|+e_0}2\eta}{p}\right);
$$
\item If $p\equiv3\mod4$, then we have $\sigma(F_k)=(-1)^{k-1}F_k$ and $\left(\frac{-1}{p}\right)=-1$. Recall that $EF_kE=(-1)^{e_0}(-1)^{k-1}F_k$ with
$$
E=\underbrace{\sigma_y\otimes\sigma_x\otimes\sigma_y\otimes\sigma_x\otimes\cdots}_{e_0}.
$$
Hence
\begin{equation*}
\begin{split}
\sigma(\Delta_\alpha^+(t_{kj}))&=\left(\frac{(-1)^{|\mathbf{c}|}2\eta}{p}\right)(-1)^{e_0}\cdot E\Delta_\alpha^+(t_{kj})E,\\
\sigma(\Delta_\alpha^+(s_k))&=\left(\frac{(-1)^{|\mathbf{c}|}2\eta}{p}\right)(-1)^{e_0}\cdot E\Delta_\alpha^+(s_k)E,
\end{split}
\end{equation*}
so we have $\sigma(\Delta_\alpha^+)=E\Delta_\alpha^{\mu'} E$ with
$$
{\mu'}=\left(\frac{(-1)^{|\mathbf{c}|}2\eta}{p}\right)(-1)^{e_0}=\left(\frac{(-1)^{|\mathbf{c}|+e_0}2\eta}{p}\right).
$$
\end{enumerate}
\end{proof}

\begin{proposition}\label{symbol_for_even}
Assume $e$ is even. Suppose that
$$
\mathrm{Res}_{(\tilde{T}^{*e}\cdot\tilde{S}_e^{\eta'})\cap\tilde{A}_n}^{\tilde{T}^{*e}\cdot\tilde{S}_e^{\eta'}}\Delta_\alpha^+=\Delta'^+_\alpha\oplus\Delta'^-_\alpha.
$$
Then the two irreducible representations $\Delta'^\pm_\alpha$ of $(\tilde{T}^{*e}\cdot\tilde{S}_e^{\eta'})\cap\tilde{A}_n$ can be chosen such that
\begin{equation*}
\begin{split}
\Delta'^+_\alpha(t_{1j}t_{2j}\cdots t_{ej})&=\mathrm{i}^{e_0}\alpha(t_j)^{e}I_{2^{e_0-1}},\\
\Delta'^-_\alpha(t_{1j}t_{2j}\cdots t_{ej})&=-\mathrm{i}^{e_0}\alpha(t_j)^{e}I_{2^{e_0-1}}.
\end{split}
\end{equation*}
In particular, $\sigma(\Delta'^+_\alpha)$ is equivalent to $\Delta'^{\mu''}_\alpha$ with
$$
\mu''=\left(\frac{(-1)^{e_0}}{p}\right).
$$
\end{proposition}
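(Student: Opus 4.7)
The plan is to split $\Delta_\alpha^+|_H$ (where $H:=(\tilde{T}^{*e}\cdot\tilde{S}_e^{\eta'})\cap\tilde{A}_n$) via the intertwiner that already appeared in the proof of the previous proposition, compute the central scalar at $t_{1j}t_{2j}\cdots t_{ej}$ explicitly, and then apply $\sigma$. Recall that for $e=2e_0$ even, the previous proof exhibited $\Delta_\alpha^-=F_{2e_0+1}\Delta_\alpha^+F_{2e_0+1}$ with $F_{2e_0+1}=\sigma_z^{\otimes e_0}$. Since $\varepsilon\equiv 1$ on $H$, this conjugation relation says that $F_{2e_0+1}$ commutes with $\Delta_\alpha^+(g)$ for every $g\in H$. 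Clifford theory, applied to the index-$2$ subgroup $H$ and the self-associated $\Delta_\alpha^+$, forces $\Delta_\alpha^+|_H$ to be a sum of two inequivalent irreducibles each of dimension $2^{e_0-1}$; since the two $\pm 1$-eigenspaces of $F_{2e_0+1}$ are $H$-invariant and of exactly this dimension, they must be those two summands, which I label $\Delta'^+_\alpha$ (the $+1$-eigenspace) and $\Delta'^-_\alpha$ (the $-1$-eigenspace).

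Next, evaluating at $t_{1j}t_{2j}\cdots t_{ej}$ (which has sign $(-1)^e=1$ and so lies in $H$) using $\Delta_\alpha^+(t_{kj})=\alpha(t_j)F_k$ together with the identities $F_1F_2\cdots F_{2e_0+1}=\mathrm{i}^{e_0}I_{2^{e_0}}$ and $F_{2e_0+1}^2=I_{2^{e_0}}$ yields
$$
\Delta_\alpha^+(t_{1j}t_{2j}\cdots t_{ej})=\alpha(t_j)^e\,F_1F_2\cdots F_{2e_0}=\mathrm{i}^{e_0}\alpha(t_j)^e\,F_{2e_0+1}.
$$
On the $+1$- and $-1$-eigenspaces of $F_{2e_0+1}$ this operator acts as the scalars $\pm\mathrm{i}^{e_0}\alpha(t_j)^e I_{2^{e_0-1}}$, which is exactly the displayed formula for $\Delta'^\pm_\alpha(t_{1j}t_{2j}\cdots t_{ej})$.

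For the Galois action, the paper has already shown $\sigma(\alpha(t_j))=\left(\frac{(-1)^{|\mathbf{c}|}2\eta}{p}\right)\alpha(t_j)$, and the Legendre-symbol proposition applied with $q=-1$ gives $\sigma(\mathrm{i})=\left(\frac{-1}{p}\right)\mathrm{i}$. Since $e$ is even, the first factor disappears after the $e$-th power, so
$$
\sigma\!\left(\mathrm{i}^{e_0}\alpha(t_j)^e\right)=\left(\frac{-1}{p}\right)^{e_0}\mathrm{i}^{e_0}\alpha(t_j)^e=\left(\frac{(-1)^{e_0}}{p}\right)\mathrm{i}^{e_0}\alpha(t_j)^e=\mu''\,\mathrm{i}^{e_0}\alpha(t_j)^e.
$$
Proposition \ref{symbol_for_odd} gives $\sigma(\Delta_\alpha^+)\cong\Delta_\alpha^{\mu'}$, and because $\Delta_\alpha^+$ and $\varepsilon\Delta_\alpha^+$ have identical restrictions to $H$, we deduce $\sigma(\Delta_\alpha^+)|_H\cong\Delta'^+_\alpha\oplus\Delta'^-_\alpha$, so $\sigma$ permutes the pair $\{\Delta'^+_\alpha,\Delta'^-_\alpha\}$. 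Comparing the two distinct nonzero central values $\pm\mathrm{i}^{e_0}\alpha(t_j)^e$ then identifies which case occurs, proving $\sigma(\Delta'^+_\alpha)\cong\Delta'^{\mu''}_\alpha$. The main obstacle is the bookkeeping in the first step: verifying that the concrete eigenspace decomposition of $F_{2e_0+1}$ coincides with the abstract Clifford decomposition and is compatible with the labeling freedom of $\Delta'^\pm_\alpha$; once that alignment is fixed, the rest is a routine computation with Pauli matrices and Legendre symbols.
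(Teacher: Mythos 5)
Your proof is correct and follows essentially the same route as the paper: the key step is that $t_{1j}t_{2j}\cdots t_{ej}$ is central in $(\tilde{T}^{*e}\cdot\tilde{S}_e^{\eta'})\cap\tilde{A}_n$, so each summand $\Delta'^\pm_\alpha$ acts by a scalar there, and the eigenvalue computation $\Delta_\alpha^+(t_{1j}\cdots t_{ej})=\mathrm{i}^{e_0}\alpha(t_j)^e\,\sigma_z^{\otimes e_0}$ identifies those scalars as $\pm\mathrm{i}^{e_0}\alpha(t_j)^e$, after which applying $\sigma$ gives the Legendre symbol $\mu''$. Your additional bookkeeping (Clifford theory for the index-$2$ subgroup and the explicit identification of $\Delta'^\pm_\alpha$ with the $\pm1$-eigenspaces of $F_{2e_0+1}$, plus the appeal to Proposition~\ref{symbol_for_odd} to see that $\sigma$ preserves the pair $\{\Delta'^+_\alpha,\Delta'^-_\alpha\}$) is sound and fills in details the paper leaves implicit, but it is not a genuinely different argument.
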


\begin{proof}
Note that $t_{1j}t_{2j}\cdots t_{ej}\in Z((\tilde{T}^{*e}\cdot\tilde{S}_e^{\eta'})\cap\tilde{A}_n)$, so $\Delta'^\pm_\alpha(t_{1j}t_{2j}\cdots t_{ej})$ are both scalar matrices. Since the eigenvalues of
$$
\Delta^+_\alpha(t_{1j}t_{2j}\cdots t_{ej})=\alpha(t_j)^eF_1F_2\cdots F_{2e_0}=\mathrm{i}^{e_0}\alpha(t_j)^e\cdot\sigma_z^{\otimes e_0}
$$
are $2^{e_0-1}$ copies of $\mathrm{i}^{e_0}\alpha(t_j)^e$ and $-\mathrm{i}^{e_0}\alpha(t_j)^e$, this completes the proof.
\end{proof}

Recall that
$$
(N_{\mathbf{c}}/R_{\mathbf{c}})\wr S_e=(\mathrm{GL}(c_1,p)\times\mathrm{GL}(c_2,p)\times\cdots\times\mathrm{GL}(c_r,p))\wr S_e.
$$
The product of Steinberg characters of $\mathrm{GL}(c_j,p)$ can be extended to an irreducible character of $(N_{\mathbf{c}}/R_{\mathbf{c}})\wr S_e$, denoted by $\mathrm{St}_{\mathbf{c}}$. Then $\mathrm{St}_{\mathbf{c}}$ can be viewed as an irreducible character of $\left(\widetilde{N_{\mathbf{c}}\wr S_e}\right)/R_{\mathbf{c}}^e$ trivial on $\{\pm1\}$ such that $\mathrm{Res}_{(N'_{\mathbf{c}}/R_{\mathbf{c}})^e}^{(\widetilde{N_{\mathbf{c}}\wr S_e})/R_{\mathbf{c}}^e}\mathrm{St}_{\mathbf{c}}$ is the only one $p$-defect-zero irreducible character of $(N'_{\mathbf{c}}/R_{\mathbf{c}})^e$. Note that
$$
N'_{\mathbf{c}}/R_{\mathbf{c}}=\mathrm{SL}(c_1,p)\times\mathrm{SL}(c_2,p)\times\cdots\times\mathrm{SL}(c_r,p).
$$
Moreover, all values of $\mathrm{St}_{\mathbf{c}}\in\mathrm{Irr}((\widetilde{N_{\mathbf{c}}\wr S_e})/R_{\mathbf{c}}^e)$ are integers.

Let $\alpha\in\mathrm{Irr}(\tilde{T})$ be an irreducible character of $\tilde{T}$ such that $\alpha(-1)=-1$ and let $\psi_\alpha^+$, $\psi_\alpha^-$ be the irreducible character of $\Delta_\alpha^+$, $\Delta_\alpha^-$, respectively (note that $\psi_\alpha^+=\psi_\alpha^-$ if and only if $e$ is even). Then $\psi_\alpha^+$ can be viewed as an irreducible character of $\left(\widetilde{N_{\mathbf{c}}\wr S_e}\right)/R_{\mathbf{c}}^e$ by the inflation via
$$
1\to(N'_{\mathbf{c}}/R_{\mathbf{c}})^e\to\left(\widetilde{N_{\mathbf{c}}\wr S_e}\right)/R_{\mathbf{c}}^e\to\tilde{T}^{*e}\cdot\tilde{S}_e^{\eta'} \to1.
$$
Hence $\mathrm{St}_{\mathbf{c}}\psi_\alpha^+$ is an irreducible character of $\left(\widetilde{N_{\mathbf{c}}\wr S_e}\right)/R_{\mathbf{c}}^e$ such that its restriction to $\tilde{N}_{\mathbf{c}}^{*e}/R_{\mathbf{c}}^e$ is a $p$-defect-zero irreducible character of $\tilde{N}_{\mathbf{c}}^{*e}/R_{\mathbf{c}}^e$. Moreover, let $\kappa$ be a $p$-bar-core partition of $e$ and let $\chi_\kappa$ be the irreducible character of $S_e$ corresponding to $\kappa$. Then $\chi_\kappa$ can be viewed as an irreducible character of $\left(\widetilde{N_{\mathbf{c}}\wr S_e}\right)/R_{\mathbf{c}}^e$ trivial on $\tilde{N}_{\mathbf{c}}^{*e}/R_{\mathbf{c}}^e$ by inflation. Thus $\mathrm{St}_{\mathbf{c}}\psi_\alpha^+\chi_\kappa$ is a $p$-defect-zero character of $\left(\widetilde{N_{\mathbf{c}}\wr S_e}\right)/R_{\mathbf{c}}^e$. Moreover, all values of $\chi_\kappa$ are integers.

Set $\tilde{T}_0=\tilde{T}\cap\tilde{A}_n$ and $\tilde{M}_{\mathbf{c}}=N'_{\mathbf{c}}\rtimes\tilde{T}_0$, where $\tilde{A}_n$ is the kernel of $\varepsilon\colon\tilde{S}_n^\eta\to\{\pm1\}$, in other words, the double cover of the alternating group $A_n$. Then we have $\tilde{M}_{\mathbf{c}}=\tilde{N}_{\mathbf{c}}\cap\tilde{A}_n$ and
$$
\tilde{M}_{\mathbf{c}}^e=\underbrace{\tilde{M}_{\mathbf{c}}\cdot_{\langle-1\rangle}\tilde{M}_{\mathbf{c}}\cdot_{\langle-1\rangle}\cdots\ \cdot_{\langle-1\rangle}\tilde{M}_{\mathbf{c}}}_e\unlhd\tilde{N}_{\mathbf{c}}^{*e}=\underbrace{\tilde{N}_{\mathbf{c}}*\tilde{N}_{\mathbf{c}}*\cdots*\tilde{N}_{\mathbf{c}}}_e
$$
is a normal subgroup of $\tilde{N}_{\mathbf{c}}^{*e}$ of index $2^e$. Here $\cdot_{\langle-1\rangle}$ is the central product.

\begin{theorem}\label{irr_of_NcSe/Rc}
Let $\alpha\in\mathrm{Irr}(\tilde{T})$ be an irreducible character of $\tilde{T}$ such that $\alpha(-1)=-1$. View $\alpha$ as a linear character of $\tilde{N}_{\mathbf{c}}/R_{\mathbf{c}}$. If $\varphi\in\mathrm{Irr}((\widetilde{N_{\mathbf{c}}\wr S_e})/R_{\mathbf{c}}^e)$ is a $p$-defect-zero irreducible character such that $\varphi(-1)=-\varphi(1)$ and
$$
\mathrm{Res}_{(\tilde{M}_{\mathbf{c}}/R_{\mathbf{c}})^e}^{(\widetilde{N_{\mathbf{c}}\wr S_e})/R_{\mathbf{c}}^e}\varphi=\frac{\varphi(1)}{\mathrm{St}_{\mathbf{c}}(1)}\cdot\mathrm{Res}_{(\tilde{M}_{\mathbf{c}}/R_{\mathbf{c}})^e}^{(\widetilde{N_{\mathbf{c}}\wr S_e})/R_{\mathbf{c}}^e}\mathrm{St}_{\mathbf{c}}\cdot\left(\mathrm{Res}_{\tilde{M}_{\mathbf{c}}/R_{\mathbf{c}}}^{\tilde{N}_{\mathbf{c}}/R_{\mathbf{c}}}\alpha\right)^e,
$$
there is a $p$-core partition $\kappa$ of $e$ such that $\varphi=\mathrm{St}_{\mathbf{c}}\psi_\alpha^+\chi_\kappa$ or $\varphi=\mathrm{St}_{\mathbf{c}}\psi_\alpha^-\chi_\kappa$. Set
$$
\mu=\left(\frac{-1}{p}\right)^{e_0}\left(\frac{(-1)^{|\mathbf{c}|}2\eta}{p}\right)^e=\left\{\begin{array}{ll}
\mu',&\mbox{if }e=2e_0+1\mbox{ is odd},\\
\mu'',&\mbox{if }e=2e_0\mbox{ is even}.
\end{array}\right.
$$
\begin{enumerate}
\item If $e$ is odd, then $\varepsilon\varphi\neq\varphi$ and $\sigma(\mathrm{St}_{\mathbf{c}}\psi_\alpha^+\chi_\kappa)=\mathrm{St}_{\mathbf{c}}\psi_\alpha^\mu\chi_\kappa$;
\item If $e$ is even, then $\varepsilon\varphi=\varphi$ and
$$
\mathrm{Res}_{(\widetilde{N_{\mathbf{c}}\wr S_e}\cap\tilde{A}_n)/R_{\mathbf{c}}^e}^{(\widetilde{N_{\mathbf{c}}\wr S_e})/R_{\mathbf{c}}^e}\varphi=\varphi^++\varphi^-
$$
for two irreducible characters $\varphi^\pm$ such that $\sigma(\varphi^+)=\varphi^\mu$.
\end{enumerate}
\end{theorem}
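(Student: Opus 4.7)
The plan is to combine Gallagher's theorem with the structural results of this section. Setting $G = \widetilde{N_{\mathbf{c}}\wr S_e}/R_{\mathbf{c}}^e$ and $N = (N'_{\mathbf{c}}/R_{\mathbf{c}})^e$, so that $G/N \cong \tilde{T}^{*e}\cdot\tilde{S}_e^{\eta'}$, I would first establish the parametrization. The restriction hypothesis forces $\mathrm{Res}_N \varphi$ to be a multiple of the unique $p$-defect-zero irreducible character of $N$ -- namely the product of the Steinberg characters of the factors $\mathrm{SL}(c_j,p)$. Since $\mathrm{St}_{\mathbf{c}}$ extends this character by construction, Gallagher's theorem yields $\varphi = \mathrm{St}_{\mathbf{c}}\cdot\beta$ for some $\beta \in \mathrm{Irr}(G/N)$. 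The remaining piece of the hypothesis, the factor $(\mathrm{Res}^{\tilde{N}_{\mathbf{c}}/R_{\mathbf{c}}}_{\tilde{M}_{\mathbf{c}}/R_{\mathbf{c}}}\alpha)^e$, then pins down $\mathrm{Res}_{\tilde{T}_0^{*e}}\beta$, and together with $\beta(-1) = -\beta(1)$, Lemma \ref{rep_of_Te} forces $\mathrm{Res}_{\tilde{T}^{*e}}\beta$ to be a scalar multiple of $\psi_{\alpha,\ldots,\alpha}$. A second Gallagher step, applied to $\tilde{T}^{*e}\lhd\tilde{T}^{*e}\cdot\tilde{S}_e^{\eta'}$ with extensions $\Delta_\alpha^{\pm}$, writes $\beta = \psi_\alpha^{\epsilon}\cdot\chi$ for some $\epsilon\in\{+,-\}$ and $\chi$ inflated from an irreducible character of $\tilde{S}_e^{\eta'}/\langle-1\rangle \cong S_e$. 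Finally, the $p$-defect-zero assumption on $\varphi$ -- using that $|\tilde{T}^{*e}|$ is coprime to $p$ and that $\mathrm{St}_{\mathbf{c}}(1)\psi_\alpha^{\pm}(1)$ absorbs the $p$-part of $|N|\cdot|\tilde T^{*e}|$ -- forces $\chi$ to be $p$-defect zero in $S_e$, so that $\chi = \chi_\kappa$ for some $p$-core partition $\kappa$ of $e$.

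Next I would compute the two actions. Both $\mathrm{St}_{\mathbf{c}}$ and $\chi_\kappa$ take integer values, so $\sigma$ acts on $\varphi$ only through its action on the $\psi_\alpha^{\pm}$-factor. For odd $e=2e_0+1$, Proposition \ref{symbol_for_odd} gives $\sigma(\psi_\alpha^+) = \psi_\alpha^{\mu'}$ with $\mu' = \left(\frac{(-1)^{|\mathbf{c}|+e_0}2\eta}{p}\right)$, and a direct Legendre-symbol calculation (using $\left(\frac{-1}{p}\right)^{e_0} = \left(\frac{(-1)^{e_0}}{p}\right)$ and the fact that odd powers of $\left(\frac{(-1)^{|\mathbf{c}|}2\eta}{p}\right)$ preserve its sign) shows $\mu = \mu'$. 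For even $e=2e_0$, Proposition \ref{symbol_for_even} gives $\sigma(\psi'^{+}_\alpha) = \psi'^{\mu''}_\alpha$ with $\mu'' = \left(\frac{-1}{p}\right)^{e_0}$; the even exponent $e$ collapses $\left(\frac{(-1)^{|\mathbf{c}|}2\eta}{p}\right)^e$ to $1$, again giving $\mu = \mu''$. The $\varepsilon$-action follows from the Humphreys-product description of $\psi_\alpha^{\pm}$: since $\psi_{\alpha,\ldots,\alpha}$ is the Humphreys product $\alpha*\cdots*\alpha$ of $e$ non-self-associated factors (because $\varepsilon$ is nontrivial on $\tilde{T}$ as each $t_j$ projects to an odd permutation), Theorem \ref{symbol_of_Humphreys_product} makes $\psi_\alpha^+$ non-self-associated with $\varepsilon\psi_\alpha^+ = \psi_\alpha^-$ precisely when $e$ is odd, and self-associated (splitting as $\psi'^+_\alpha+\psi'^-_\alpha$ on $\tilde{A}_n$) when $e$ is even; this propagates through the factorisation of $\varphi$ to yield the stated $\varepsilon$-behaviour in (1) and (2).

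The main obstacle will be the second Gallagher step: one must verify that the partial restriction condition on $(\tilde{M}_{\mathbf{c}}/R_{\mathbf{c}})^e$ -- a subgroup of $\tilde{N}_{\mathbf{c}}^{*e}/R_{\mathbf{c}}^e$ living only in the central-product direction -- genuinely forces $\mathrm{Res}_{\tilde{T}^{*e}}\beta$ to be proportional to $\psi_{\alpha,\ldots,\alpha}$, as opposed to some other $\psi_{\alpha_1,\ldots,\alpha_e}$ differing from it by slot-wise $\varepsilon$-twists that might agree after restriction to $\tilde{T}_0$. The uniqueness clause in Lemma \ref{rep_of_Te} combined with $\alpha(-1)=-1$ should close this gap. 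A parallel subtlety is the propagation of $\varepsilon$ through the triple product $\mathrm{St}_{\mathbf{c}}\psi_\alpha^{\pm}\chi_\kappa$, which I would handle uniformly by observing that $\varphi$ and $\varepsilon\varphi$ both lie in the family parametrised in the first stage (as $\varepsilon$ is trivial on $(\tilde{M}_{\mathbf{c}}/R_{\mathbf{c}})^e$), so the comparison reduces entirely to the already-understood Humphreys behaviour of the $\psi_\alpha^{\pm}$-factor.
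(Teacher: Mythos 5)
Your proposal is correct and follows essentially the same route as the paper: both arguments reduce $\varphi$ to $\mathrm{St}_{\mathbf{c}}\cdot\beta$ via the unique $p$-defect-zero character of $(N'_{\mathbf{c}}/R_{\mathbf{c}})^e$ and its Steinberg extension (the paper phrases this via Clifford restriction to $\tilde{N}_{\mathbf{c}}^{*e}/R_{\mathbf{c}}^e$ rather than an explicit invocation of Gallagher, but the mechanism is identical), pin down the $\tilde{T}^{*e}$-constituent as $\psi_{\alpha,\ldots,\alpha}$ or its $\varepsilon$-twist using the $(\tilde{M}_{\mathbf{c}}/R_{\mathbf{c}})^e$-restriction hypothesis together with Lemma~\ref{rep_of_Te}, factor off a defect-zero $\chi_\kappa$ from $S_e$, and delegate the $\sigma$-action entirely to Propositions~\ref{symbol_for_odd} and~\ref{symbol_for_even} using the rationality of $\mathrm{St}_{\mathbf{c}}$ and $\chi_\kappa$. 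The potential subtlety you flag in the second Gallagher step is real but is resolved exactly as you suggest, via the uniqueness clause in Lemma~\ref{rep_of_Te} once each slot character is known to lie in $\{\alpha,\varepsilon\alpha\}$.
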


\begin{proof}
Since $\varphi$ is a $p$-defect-zero irreducible character, we know that
$$
\mathrm{Res}_{\tilde{N}_{\mathbf{c}}^{*e}/R_{\mathbf{c}}^e}^{(\widetilde{N_{\mathbf{c}}\wr S_e})/R_{\mathbf{c}}^e}\varphi=\varphi_1+\varphi_2+\cdots+\varphi_m
$$
is a sum of $p$-defect zero irreducible characters of $(\tilde{N}_{\mathbf{c}}/R_{\mathbf{c}})^e$ conjugate to each other under the action of $\tilde{S}_e^{\eta'}$ such that $\varphi_j(-1)=-\varphi_j(1)$. Since $\mathrm{Res}_{(N'_{\mathbf{c}}/R_{\mathbf{c}})^e}^{\tilde{N}_{\mathbf{c}}^{*e}/R_{\mathbf{c}}^e}\mathrm{St}_{\mathbf{c}}$ is the only one $p$-defect-zero irreducible character of $(N'_{\mathbf{c}}/R_{\mathbf{c}})^e$, which can be extended to $\tilde{N}_{\mathbf{c}}^{*e}/R_{\mathbf{c}}^e$. We know that
$$
\varphi_j=\mathrm{Res}_{\tilde{N}_{\mathbf{c}}^{*e}/R_{\mathbf{c}}^e}^{(\widetilde{N_{\mathbf{c}}\wr S_e})/R_{\mathbf{c}}^e}\mathrm{St}_{\mathbf{c}}\cdot\psi_{\alpha_1,\alpha_2,\ldots,\alpha_e}
$$
for some $\alpha_1,\alpha_2,\ldots,\alpha_e\in\mathrm{Irr}(\tilde{T})$. Here $\psi_{\alpha_1,\alpha_2,\ldots,\alpha_e}$ is the irreducible character of $\tilde{T}^{*e}$ defined as in Lemma~\ref{rep_of_Te} and again we view it as an irreducible character of $\tilde{N}_{\mathbf{c}}^{*e}/R_{\mathbf{c}}^e$ by inflation. Moreover, since
$$
\mathrm{Res}_{(\tilde{M}_{\mathbf{c}}/R_{\mathbf{c}})^e}^{\tilde{N}_{\mathbf{c}}^{*e}/R_{\mathbf{c}}^e}\psi_{\alpha_1,\alpha_2,\ldots,\alpha_e}=2^{e_0}\cdot\mathrm{Res}_{\tilde{M}_{\mathbf{c}}/R_{\mathbf{c}}}^{\tilde{N}_{\mathbf{c}}/R_{\mathbf{c}}}\alpha_1\cdot\mathrm{Res}_{\tilde{M}_{\mathbf{c}}/R_{\mathbf{c}}}^{\tilde{N}_{\mathbf{c}}/R_{\mathbf{c}}}\alpha_2\cdot\ \cdots\ \cdot\mathrm{Res}_{\tilde{M}_{\mathbf{c}}/R_{\mathbf{c}}}^{\tilde{N}_{\mathbf{c}}/R_{\mathbf{c}}}\alpha_e.
$$
We know that $\mathrm{Res}_{\tilde{M}_{\mathbf{c}}/R_{\mathbf{c}}}^{\tilde{N}_{\mathbf{c}}/R_{\mathbf{c}}}\alpha_j=\mathrm{Res}_{\tilde{M}_{\mathbf{c}}/R_{\mathbf{c}}}^{\tilde{N}_{\mathbf{c}}/R_{\mathbf{c}}}\alpha$ and hence $\alpha_j=\alpha$ or $\alpha_j=\varepsilon\alpha$, so
$$
\varphi_j=\mathrm{Res}_{\tilde{N}_{\mathbf{c}}^{*e}/R_{\mathbf{c}}^e}^{(\widetilde{N_{\mathbf{c}}\wr S_e})/R_{\mathbf{c}}^e}\mathrm{St}_{\mathbf{c}}\cdot\psi_{\alpha,\alpha,\ldots,\alpha}\mbox{ or }\varphi_j=\mathrm{Res}_{\tilde{N}_{\mathbf{c}}^{*e}/R_{\mathbf{c}}^e}^{(\widetilde{N_{\mathbf{c}}\wr S_e})/R_{\mathbf{c}}^e}\mathrm{St}_{\mathbf{c}}\cdot\varepsilon\psi_{\alpha,\alpha,\ldots,\alpha},
$$
in other words,
$$
\varphi_j=\mathrm{Res}_{\tilde{N}_{\mathbf{c}}^{*e}/R_{\mathbf{c}}^e}^{(\widetilde{N_{\mathbf{c}}\wr S_e})/R_{\mathbf{c}}^e}\left(\mathrm{St}_{\mathbf{c}}\psi_\alpha^+\right)\mbox{ or }\varphi_j=\mathrm{Res}_{\tilde{N}_{\mathbf{c}}^{*e}/R_{\mathbf{c}}^e}^{(\widetilde{N_{\mathbf{c}}\wr S_e})/R_{\mathbf{c}}^e}\left(\mathrm{St}_{\mathbf{c}}\psi_\alpha^-\right).
$$
In particular, each $\varphi_j$ is invariant under the action of $\tilde{S}_e^{\eta'}$. Hence we have $\varphi_1=\varphi_2=\cdots=\varphi_m$. By comparing the degrees we have $m=\frac{\varphi(1)}{2^{e_0}\mathrm{St}_{\mathbf{c}}(1)}$ and
$$
\mathrm{Res}_{\tilde{N}_{\mathbf{c}}^{*e}/R_{\mathbf{c}}^e}^{(\widetilde{N_{\mathbf{c}}\wr S_e})/R_{\mathbf{c}}^e}\varphi=\frac{\varphi(1)}{2^{e_0}\mathrm{St}_{\mathbf{c}}(1)}\cdot\mathrm{Res}_{\tilde{N}_{\mathbf{c}}^{*e}/R_{\mathbf{c}}^e}^{(\widetilde{N_{\mathbf{c}}\wr S_e})/R_{\mathbf{c}}^e}\left(\mathrm{St}_{\mathbf{c}}\psi_\alpha^\pm\right).
$$
Since $\varphi$ is a $p$-defect-zero irreducible character of $(\widetilde{N_{\mathbf{c}}\wr S_e})/R_{\mathbf{c}}^e$, we know that $\varphi=\mathrm{St}_{\mathbf{c}}\psi_\alpha^\pm\chi_\kappa$ for some $p$-bar-core partition $\kappa$ of $e$.

\begin{enumerate}
\item If $e=2e_0+1$ is odd, by Proposition~\ref{symbol_for_odd} we have $\sigma(\mathrm{St}_{\mathbf{c}}\psi_\alpha^+\chi_\kappa)=\mathrm{St}_{\mathbf{c}}\psi_\alpha^\mu\chi_\kappa$ with
$$
\mu=\mu'=\left(\frac{(-1)^{|\mathbf{c}|+e_0}2\eta}{p}\right);
$$
\item If $e=2e_0$ is even and assume
$$
\mathrm{Res}_{(\tilde{T}^{*e}\cdot\tilde{S}_e^{\eta'})\cap\tilde{A}_n}^{\tilde{T}^{*e}\cdot\tilde{S}_e^{\eta'}}\left(\psi_\alpha^+\chi_\kappa\right)=\varphi'^++\varphi'^-,
$$
by Proposition~\ref{symbol_for_even} we have
$$
\varphi'^+(t_{1j}t_{2j}\cdots t_{ej})=-\varphi'^-(t_{1j}t_{2j}\cdots t_{ej})=2^{e_0-1}\mathrm{i}^{e_0}\alpha(t_j)^{e}\chi_\kappa(1),
$$
so $\sigma\left(\mathrm{Res}_{(\widetilde{N_{\mathbf{c}}\wr S_e}\cap\tilde{A}_n)/R_{\mathbf{c}}^e}^{(\widetilde{N_{\mathbf{c}}\wr S_e})/R_{\mathbf{c}}^e}\mathrm{St}_{\mathbf{c}}\cdot\varphi'^+\right)=\mathrm{Res}_{(\widetilde{N_{\mathbf{c}}\wr S_e}\cap\tilde{A}_n)/R_{\mathbf{c}}^e}^{(\widetilde{N_{\mathbf{c}}\wr S_e})/R_{\mathbf{c}}^e}\mathrm{St}_{\mathbf{c}}\cdot\varphi'^\mu$ with
$$
\mu=\mu''=\left(\frac{(-1)^{e_0}}{p}\right).
$$
Note that $\varphi^\pm=\mathrm{Res}_{(\widetilde{N_{\mathbf{c}}\wr S_e}\cap\tilde{A}_n)/R_{\mathbf{c}}^e}^{(\widetilde{N_{\mathbf{c}}\wr S_e})/R_{\mathbf{c}}^e}\mathrm{St}_{\mathbf{c}}\cdot\varphi'^\pm$. This completes the proof.
\end{enumerate}
\end{proof}

Let $\Gamma\subseteq\mathrm{Irr}(\tilde{T})$ be a set of representatives of irreducible characters of $\tilde{T}$ which are nontrivial on $\{\pm1\}$ under the action of multiplying $\varepsilon$. Then $\varepsilon\Gamma\cap\Gamma=\emptyset$. Denote by $\mathrm{dz}(\tilde{M}_{\mathbf{c}}/R_{\mathbf{c}}\mid-1)$ the set of $p$-defect-zero irreducible characters of $\tilde{M}_{\mathbf{c}}/R_{\mathbf{c}}$ which are nontrivial on $\{\pm1\}$. We have a bijection
$$
\Gamma\to\mathrm{dz}(\tilde{M}_{\mathbf{c}}/R_{\mathbf{c}}\mid-1)
$$
given by multiplying the Steinberg character of $N_{\mathbf{c}}/R_{\mathbf{c}}$ and restricting to $\tilde{M}_{\mathbf{c}}/R_{\mathbf{c}}$.

\section{Weights for the Covering Subgroups of Symmetric Groups}\label{section7}

Let
$$
R=I_{n_0}\times R_{\mathbf{c}_1}^{e_1}\times R_{\mathbf{c}_2}^{e_2}\times\cdots\times R_{\mathbf{c}_l}^{e_l}
$$
be a $p$-radical subgroup of $\tilde{S}_n$ such that $n=n_0+e_1p^{|\mathbf{c}_1|}+e_2p^{|\mathbf{c}_2|}+\cdots+e_lp^{|\mathbf{c}_l|}$, and let
$$
\tilde{N}=N_{\tilde{S}_n^\eta}(R)=\tilde{S}_{n_0}^\eta*\widetilde{N_{\mathbf{c}_1}\wr S_{e_1}}*\widetilde{N_{\mathbf{c}_2}\wr S_{e_2}}*\cdots*\widetilde{N_{\mathbf{c}_l}\wr S_{e_l}}
$$
be its normalizer in $\tilde{S}_n^{\eta}$. As before, set $\tilde{M}_{\mathbf{c}}=\tilde{N}_{\mathbf{c}}\cap\tilde{A}_n$ for any $\mathbf{c}\in\{\mathbf{c}_1,\mathbf{c}_2,\ldots,\mathbf{c}_l\}$. Then we have $|\tilde{N}_{\mathbf{c}}/\tilde{M}_{\mathbf{c}}|=2$ and
$$
\tilde{M}=\tilde{S}_{n_0}^\eta\cdot_{\langle-1\rangle}\tilde{M}_{\mathbf{c}_1}^{e_1}\cdot_{\langle-1\rangle}\tilde{M}_{\mathbf{c}_2}^{e_2}\cdot_{\langle-1\rangle}\cdots\ \cdot_{\langle-1\rangle}\tilde{M}_{\mathbf{c}_l}^{e_l}
$$
is a normal subgroup of $\tilde{N}$.

Fix a positive integer $d>0$. Define
$$
\mathcal{C}_d=\bigcup_{|\mathbf{c}|=d}\mathrm{dz}(\tilde{M}_{\mathbf{c}}/R_{\mathbf{c}}\mid-1),
$$
where $\mathrm{dz}(\tilde{M}_{\mathbf{c}}/R_{\mathbf{c}}\mid-1)$ is the set of $p$-defect-zero irreducible characters of $\tilde{M}_{\mathbf{c}}/R_{\mathbf{c}}$ which are nontrivial on $\{\pm1\}$, and the disjoint union runs over all finite sequences $\mathbf{c}$ of positive integers such that $|\mathbf{c}|=d$. Then we have
\begin{equation*}
\begin{split}
|\mathcal{C}_d|&=\frac{p-1}{2}+\binom{d-1}{1}\frac{(p-1)^2}{2}+\binom{d-1}{2}\frac{(p-1)^3}{2}+\cdots+\binom{d-1}{d-1}\frac{(p-1)^d}{2}\\
&=\frac{1}{2}(p-1)(p-1+1)^{d-1}=\frac{1}{2}(p-1)p^{d-1}.
\end{split}
\end{equation*}

Let $\Phi\in\mathrm{Irr}(\tilde{N}/R)$ be a $p$-defect-zero irreducible character of $\tilde{N}/R$ nontrivial on $\{\pm1\}$ and let $\phi\in\mathrm{Irr}(\tilde{M}/R)$ be an irreducible component of $\mathrm{Res}_{\tilde{M}/R}^{\tilde{N}/R}\Phi$. Then $\phi$ has a decomposition $\phi=\chi_\kappa\phi_1$ with $\chi_\kappa\in\mathrm{Irr}(\tilde{S}_{n_0}^\eta)$ and
$$
\phi_1=\prod_{d\geq1}\prod_{\psi\in\mathcal{C}_d}\psi^{e_\psi}\in\mathrm{Irr}(\tilde{M}_{\mathbf{c}_1}^{e_1}\cdot_{\langle-1\rangle}\tilde{M}_{\mathbf{c}_2}^{e_2}\cdot_{\langle-1\rangle}\cdots\ \cdot_{\langle-1\rangle}\tilde{M}_{\mathbf{c}_l}^{e_l})
$$
such that $\kappa$ is a $p$-bar-core bar partition (or strict partition) of $n_0$ and
$$
\sum_{\psi\in\mathrm{dz}(\tilde{M}_{\mathbf{c}_j}/R_{\mathbf{c}_j}\mid-1)}e_\psi=e_j\mbox{ for any }1\leq j\leq l.
$$
Recall that $\tilde{C}R/R=\tilde{S}_{n_0}^\eta$, where $\tilde{C}=C_{\tilde{S}_n^\eta}(R)$ is the centralizer of $R$. Let $B_\kappa$ be the spin block of $\tilde{S}_n^\eta$ corresponding to $\kappa$. Since $\mathrm{bl}(\chi_\kappa)^G=B_\kappa$, we know that $\mathrm{bl}(\Phi)^G=B_\kappa$. In other words, we have $(R,\Phi)$ is a $B_\kappa$-weight. For each $\psi\in\mathcal{C}_d$, we denote by $\mathbf{c}_\psi$ the finite sequence of integers in $\{\mathbf{c}_1,\mathbf{c}_2,\ldots,\mathbf{c}_l\}$ such that $\psi\in\mathrm{dz}(\tilde{M}_{\mathbf{c}_\psi}/R_{\mathbf{c}_\psi}\mid-1)$.

The action of $\tilde{N}/\tilde{M}$ on $\phi$ is a permutation of $\psi\in\mathcal{C}_d$ for each $d\geq1$. Denote by $\tilde{N}(\phi)$ the inertial group of $\phi$ in $\tilde{N}$. Then we have
$$
\tilde{N}(\phi)\leq\tilde{N}(\phi_1)=\tilde{S}_{n_0}^\eta*\prod_{d\geq1}^*\prod_{\psi\in\mathcal{C}_d}^*\widetilde{N_{\mathbf{c}_\psi}\wr S_{e_\psi}},
$$
where $\prod\limits^*$ is the twisted central product. Hence a $p$-defect-zero irreducible character of $\tilde{N}(\phi)/R$ is a Humphreys product of $p$-defect-zero irreducible characters of $\tilde{S}_{n_0}^\eta$ and all $\left(\widetilde{N_{\mathbf{c}_\psi}\wr S_{e_\psi}}\right)/R_{\mathbf{c}_\psi}^{e_\psi}$'s, and by Clifford Theorem the induction provides a bijection between $p$-defect-zero irreducible characters of $\tilde{N}/R$ over $\phi$ and that of $\tilde{N}(\phi)/R$ over $\phi$. By Theorem~\ref{irr_of_NcSe/Rc}, for each $\psi\in\mathcal{C}_d$, we have
\begin{enumerate}
\item If $e_\psi$ is odd, then $p$-defect-zero irreducible characters of $\left(\widetilde{N_{\mathbf{c}_\psi}\wr S_{e_\psi}}\right)/R_{\mathbf{c}_\psi}^{e_\psi}$ nontrivial on $\{\pm1\}$ over $\psi^{e_\psi}$ can be labeled as
$$
\mathrm{dz}((\widetilde{N_{\mathbf{c}_\psi}\wr S_{e_\psi}})/R_{\mathbf{c}_\psi}^{e_\psi}\mid-1,\psi^{e_\psi})=\left\{\Psi_{\psi,\kappa_\psi}^+,\Psi_{\psi,\kappa_\psi}^-\mid\kappa_\psi\mbox{ is a }p\mbox{-bar-core of }e_\psi\right\}
$$
such that $\Psi_{\psi,\kappa_\psi}^-=\varepsilon\Psi_{\psi,\kappa_\psi}^+\neq\Psi_{\psi,\kappa_\psi}^+$ and $\sigma(\Psi_{\psi,\kappa_\psi}^+)=\Psi_{\psi,\kappa_\psi}^{\mu_\psi}$ with
$$
\mu_\psi=\left(\frac{-1}{p}\right)^{\lfloor e_\psi/2\rfloor}\left(\frac{(-1)^{|\mathbf{c}_\psi|}2\eta}{p}\right)^{e_\psi}=\left(\frac{-1}{p}\right)^{(e_\psi-1)/2}\left(\frac{(-1)^{|\mathbf{c}_\psi|}2\eta}{p}\right);
$$
\item If $e_\psi$ is even, then $p$-defect-zero irreducible characters of $\left(\widetilde{N_{\mathbf{c}_\psi}\wr S_{e_\psi}}\right)/R_{\mathbf{c}_\psi}^{e_\psi}$ nontrivial on $\{\pm1\}$ over $\psi^{e_\psi}$ can be labeled as
$$
\mathrm{dz}((\widetilde{N_{\mathbf{c}_\psi}\wr S_{e_\psi}})/R_{\mathbf{c}_\psi}^{e_\psi}\mid-1,\psi^{e_\psi})=\left\{\Psi_{\psi,\kappa_\psi}\mid\kappa_\psi\mbox{ is a }p\mbox{-bar-core of }e_\psi\right\}
$$
such that $\mathrm{Res}_{(\widetilde{N_{\mathbf{c}}\wr S_e}\cap\tilde{A}_n)/R_{\mathbf{c}}^e}^{(\widetilde{N_{\mathbf{c}}\wr S_e})/R_{\mathbf{c}}^e}\Psi_{\psi,\kappa_\psi}=\Psi'^+_{\psi,\kappa_\psi}+\Psi'^-_{\psi,\kappa_\psi}$ and $\sigma(\Psi'^+_{\psi,\kappa_\psi})=\Psi'^{\mu_\psi}_{\psi,\kappa_\psi}$ with
$$
\mu_\psi=\left(\frac{-1}{p}\right)^{\lfloor e_\psi/2\rfloor}\left(\frac{(-1)^{|\mathbf{c}_\psi|}2\eta}{p}\right)^{e_\psi}=\left(\frac{-1}{p}\right)^{e_\psi/2}.
$$
\end{enumerate}

Define a map $f$ by
\begin{eqnarray*}
f\colon\bigcup_{d\geq1}\mathcal{C}_d&\to&\{p\mbox{-cores}\},\\
\psi&\mapsto&\kappa_\psi.
\end{eqnarray*}
Then we have
$$
n=|\kappa|+\sum_{d\geq1}\sum_{\psi\in\mathcal{C}_d}|f(\psi)|p^d,
$$
and the pair $(\kappa,f)$ defines one or two $p$-defect-zero irreducible characters of $\tilde{N}(\phi_1)/R$ over $\phi$ by the Humphreys product of $\chi_\kappa$ and $\Psi_{\psi,f(\psi)}^{\pm}$, $\Psi_{\psi,f(\psi)}$, according to the number of non-self-associated terms. More specifically, note that
$$
n-|\kappa|\equiv\sum_{d\geq1}\sum_{\psi\in\mathcal{C}_d}|f(\psi)|\equiv\#\left\{\psi\in\bigcup_{d\geq1}\mathcal{C}_d\ \middle|\ |f(\psi)|\mbox{ is odd}\right\}\mod2.
$$
We define
\begin{equation*}
\begin{split}
\mathrm{sym}_0&=\left\{\begin{array}{rl}
1,&\mbox{if }\chi_\kappa\mbox{ is self-associated},\\
-1,&\mbox{if }\chi_\kappa\mbox{ is non-self-associated},
\end{array}\right.\\
\mathrm{sym}&=\mathrm{sym}_0\cdot(-1)^{n-|\kappa|}.
\end{split}
\end{equation*}
Then $(\kappa,f)$ defines one $p$-defect-zero irreducible character if $\mathrm{sym}=1$ and two $p$-defect-zero irreducible characters if $\mathrm{sym}=-1$.

\begin{theorem}\label{symbol_of_weight}
Fix a pair $(\kappa,f)$ and let $n=|\kappa|+pw$. Set $\mathcal{C}_d^+=\{\psi\in\mathcal{C}_d\mid|f(\psi)|\mbox{ is even}\}$ and $\mathcal{C}_d^-=\{\psi\in\mathcal{C}_d\mid|f(\psi)|\mbox{ is odd}\}$. Denote
$$
\tilde{N}_1(\phi_1)=\prod_{d\geq1}^*\prod_{\psi\in\mathcal{C}_d}^*\widetilde{N_{\mathbf{c}_\psi}\wr S_{e_\psi}}
$$
and
$$
\mu_w=\left(\frac{-1}{p}\right)^{\lceil w/2\rceil}\left(\frac{2\eta}{p}\right)^w.
$$
Let
$$
\Psi_f=\prod_{d\geq1}^*\prod_{\psi\in\mathcal{C}_d^+}^*\Psi_{\psi,f(\psi)}*\prod_{d\geq1}^*\prod_{\psi\in\mathcal{C}_d^-}^*\Psi_{\psi,f(\psi)}^+
$$
be the Humphreys product of $\Psi_{\psi,f(\psi)}$ or $\Psi_{\psi,f(\psi)}^+$ according to $\psi\in\mathcal{C}_d^+$ or $\mathcal{C}_d^-$.
\begin{enumerate}
\item If $w$ is odd, then $\Psi_f$ is a non-self-associated $p$-defect-zero irreducible character of $\tilde{N}_1(\phi_1)$. Let $\Psi_f^+=\Psi_f$ and $\Psi_f^-=\varepsilon\Psi_f$. Then $\sigma(\Psi_f^+)=\Psi_f^{\mu_w}$.
\item If $w$ is even, then $\Psi_f$ is a self-associated $p$-defect-zero irreducible character of $\tilde{N}_1(\phi)$. Let $\mathrm{Res}_{\tilde{N}_1(\phi_1)\cap\tilde{A}_n}^{\tilde{N}_1(\phi_1)}\Psi_f=\Psi'^+_f+\Psi'^-_f$. Then $\sigma(\Psi'^+_f)=\Psi'^{\mu_w}_f$.
\end{enumerate}
\end{theorem}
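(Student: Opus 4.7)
The plan is to apply the Humphreys product formula (Theorem~\ref{symbol_of_Humphreys_product}) to decompose $\Psi_f$, then combine this with the known $\sigma$-action on each factor provided by Theorem~\ref{irr_of_NcSe/Rc}. I first dispose of the self/non-self-associated dichotomy by counting $s := |\bigcup_d \mathcal{C}_d^-|$, the number of non-self-associated factors in the Humphreys product. From $n - |\kappa| = pw = \sum_{d,\,\psi \in \mathcal{C}_d} e_\psi p^d$, dividing by $p$ gives $w = \sum_\psi e_\psi p^{d_\psi-1}$; since $p$ is odd, $w \equiv \sum_\psi e_\psi \equiv s \pmod 2$, so $s$ and $w$ share parity and Theorem~\ref{symbol_of_Humphreys_product} immediately yields that $\Psi_f$ is non-self-associated iff $w$ is odd.

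For the Galois action, I evaluate $\Psi_f$ (when $s$ is odd) or the difference $\delta_f = \Psi'^+_f - \Psi'^-_f$ (when $s$ is even) on an element $g = \prod_\psi t_\psi$ of the form prescribed by Theorem~\ref{symbol_of_Humphreys_product}, obtaining the explicit value
\[
(2\mathrm{i})^{\lfloor s/2 \rfloor} \prod_{\psi \in \mathcal{C}^+} \delta_\psi(t_\psi) \prod_{\psi \in \mathcal{C}^-} \Psi^+_{\psi,f(\psi)}(t_\psi).
\]
Using $\sigma(\mathrm{i}) = \left(\frac{-1}{p}\right)\mathrm{i}$ together with the $\mu_\psi$ from Theorem~\ref{irr_of_NcSe/Rc}, namely $\mu_\psi = \left(\frac{-1}{p}\right)^{e_\psi/2}$ for $\psi \in \mathcal{C}^+$ and $\mu_\psi = \left(\frac{-1}{p}\right)^{(e_\psi-1)/2}\left(\frac{(-1)^{|\mathbf{c}_\psi|}2\eta}{p}\right)$ for $\psi \in \mathcal{C}^-$, the ratio $\sigma(\Psi_f)/\Psi_f$ at $g$ becomes $\left(\frac{-1}{p}\right)^{\lfloor s/2\rfloor + A + B}\left(\frac{2\eta}{p}\right)^s$ with $A = \sum_\psi \lfloor e_\psi/2 \rfloor$ and $B = \sum_{\psi\in\mathcal{C}^-} |\mathbf{c}_\psi|$.

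It remains to show this equals $\mu_w = \left(\frac{-1}{p}\right)^{\lceil w/2\rceil}\left(\frac{2\eta}{p}\right)^w$. The factor $\left(\frac{2\eta}{p}\right)^s = \left(\frac{2\eta}{p}\right)^w$ follows from $s \equiv w \pmod 2$. For the sign $\left(\frac{-1}{p}\right)$, the identity is trivial when $p \equiv 1 \pmod 4$, so I reduce to $p \equiv 3 \pmod 4$; using $\lceil w/2\rceil = \lfloor s/2\rfloor + (w-s)/2 + [s\text{ odd}]$, the task is to prove $A + B \equiv (w-s)/2 + [s\text{ odd}] \pmod 2$. Expanding $(w-s)/2$ and exploiting $(p^{d-1}-1)/2 \equiv [d\text{ even}] \pmod 2$ (valid for $p \equiv 3 \pmod 4$), one finds $(w-s)/2 \equiv A + s_e \pmod 2$ where $s_e$ counts $\psi \in \mathcal{C}^-$ with $|\mathbf{c}_\psi|$ even, while $B \equiv s_o := s - s_e \pmod 2$; substituting collapses the identity to the tautology $s \equiv [s\text{ odd}] \pmod 2$.

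The main obstacle I anticipate is the mod-$2$ bookkeeping in the last paragraph, specifically handling $(p^{d-1}-1)/2$ modulo $2$ and partitioning $\mathcal{C}^-$ by the parity of $|\mathbf{c}_\psi|$. Once this is cleared, both claims $(1)$ and $(2)$ follow uniformly: in each case the $\sigma$-action on the product $\prod_{\mathcal{C}^+} \delta_\psi \prod_{\mathcal{C}^-} \Psi^+_\psi$ has the same sign, and the statement about $\Psi'^+_f$ in case $(2)$ is equivalent to $\sigma(\delta_f) = \mu_w \delta_f$ via $\delta_f = \Psi'^+_f - \Psi'^-_f$.
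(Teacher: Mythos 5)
Your proposal is correct, and it follows essentially the same route as the paper: first determine self-association by parity of $s := \sum_d |\mathcal{C}_d^-|$, then apply Theorem~\ref{symbol_of_Humphreys_product} to reduce the computation of $\sigma$ on $\Psi_f$ (or $\delta_f$) to the product $\left(\tfrac{-1}{p}\right)^{\lfloor s/2\rfloor}\prod_\psi\mu_\psi$, then match with $\mu_w$. The only difference is the final bookkeeping: the paper keeps $n_1=pw$ throughout and uses the single identity $\left(\tfrac{-1}{p}\right)^d=\left(\tfrac{-1}{p}\right)^{(p^d-1)/2}$ to collapse the product cleanly into $\left(\tfrac{-1}{p}\right)^{(n_1-s)/2}\left(\tfrac{2\eta}{p}\right)^{n_1}$, whereas you divide by $p$ early and therefore need to partition $\mathcal{C}^-$ by the parity of $|\mathbf{c}_\psi|$; both calculations are valid and amount to the same thing.
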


\begin{proof}
Write $n_0=|\kappa|$ and $n_1=n-n_0=pw$. Note that
$$
\lfloor pw/2\rfloor-\lfloor w/2\rfloor=(p-1)w/2.
$$
We have
$$
\left(\frac{-1}{p}\right)^{\lfloor pw/2\rfloor}=\left(\frac{-1}{p}\right)^{\lfloor w/2\rfloor}\left(\frac{-1}{p}\right)^{(p-1)w/2}=\left(\frac{-1}{p}\right)^{\lfloor w/2\rfloor+w}=\left(\frac{-1}{p}\right)^{\lceil w/2\rceil}.
$$
Hence
$$
\mu_w=\left(\frac{-1}{p}\right)^{\lceil w/2\rceil}\left(\frac{2\eta}{p}\right)^w=\left(\frac{-1}{p}\right)^{\lfloor n_1/2\rfloor}\left(\frac{2\eta}{p}\right)^{n_1}.
$$

Now we can use Theorem~\ref{symbol_of_Humphreys_product}. Since $\sigma(\Psi_{\psi,\kappa_\psi}^+)=\Psi_{\psi,\kappa_\psi}^{\mu_\psi}$ for $\psi\in\mathcal{C}_d^-$ and $\sigma(\Psi'^+_{\psi,\kappa_\psi})=\Psi'^{\mu_\psi}_{\psi,\kappa_\psi}$ for $\psi\in\mathcal{C}_d^+$, it suffices to show that
$$
\mu_w=\left(\frac{-1}{p}\right)^{\lfloor s/2\rfloor}\prod_{d\geq1}\prod_{\psi\in\mathcal{C}_d}\mu_\psi,
$$
where $s=\sum_{d\geq1}|\mathcal{C}_d^-|$.

Denote by $e'_\psi=\lfloor|f(\psi)|/2\rfloor$. Then we have $|f(\psi)|=2e'_\psi+1$ if $\psi\in\mathcal{C}_d^-$ and  $|f(\psi)|=2e'_\psi$ if $\psi\in\mathcal{C}_d^+$, so
$$
n_1=\sum_{d\geq1}\sum_{\psi\in\mathcal{C}_d^+}2e'_\psi p^d+\sum_{d\geq1}\sum_{\psi\in\mathcal{C}_d^-}(2e'_\psi+1)p^d.
$$
Hence we have $n_1\equiv s\mod2$ and
$$
\frac{n_1}{2}=\sum_{d\geq1}\sum_{\psi\in\mathcal{C}_d}e'_\psi p^d+\sum_{d\geq1}\sum_{\psi\in\mathcal{C}_d^-}\frac{p^d}{2}.
$$
By direct calculation, we have
\begin{equation*}
\begin{split}
\prod_{d\geq1}\prod_{\psi\in\mathcal{C}_d}\mu_\psi=&\prod_{d\geq1}\prod_{\psi\in\mathcal{C}_d^+}\left(\frac{-1}{p}\right)^{e'_\psi}\cdot\prod_{d\geq1}\prod_{\psi\in\mathcal{C}_d^-}\left(\frac{-1}{p}\right)^{e'_\psi}\left(\frac{(-1)^{d}2\eta}{p}\right)\\
=&\prod_{d\geq1}\prod_{\psi\in\mathcal{C}_d}\left(\frac{-1}{p}\right)^{e'_\psi p^d}\cdot\prod_{d\geq1}\prod_{\psi\in\mathcal{C}_d^-}\left(\frac{-1}{p}\right)^d\cdot\left(\frac{2\eta}{p}\right)^s\\
=&\prod_{d\geq1}\prod_{\psi\in\mathcal{C}_d}\left(\frac{-1}{p}\right)^{e'_\psi p^d}\cdot\prod_{d\geq1}\prod_{\psi\in\mathcal{C}_d^-}\left(\frac{-1}{p}\right)^{(p^d-1)/2}\cdot\left(\frac{2\eta}{p}\right)^{n_1}\\
=&\left(\frac{-1}{p}\right)^{(n_1-s)/2}\left(\frac{2\eta}{p}\right)^{n_1}.
\end{split}
\end{equation*}
So we conclude that
$$
\mu_w=\left(\frac{-1}{p}\right)^{\lfloor n_1/2\rfloor}\left(\frac{2\eta}{p}\right)^{n_1}=\left(\frac{-1}{p}\right)^{\lfloor s/2\rfloor}\prod_{d\geq1}\prod_{\psi\in\mathcal{C}_d}\mu_\psi.
$$
\end{proof}

\begin{remark}
{\rm	We point out that the value of $\mu_w$ only depends on $w$.}
\end{remark}

\begin{corollary}\label{local}
Let $n=|\kappa|+pw$ and
$$
\Phi_{\kappa,f}=\mathrm{Ind}_{\tilde{N}(\phi_1)/R}^{\tilde{N}/R}(\chi_\kappa*\Psi_f)
$$
be the $p$-defect-zero irreducible character of $\tilde{N}/R$ defined by $(\kappa,f)$.
\begin{enumerate}
\item If $\kappa\in\mathcal{P}_{n_0,\textrm{even}}^{\textrm{str}}$ and $w$ is odd, or $\kappa\in\mathcal{P}_{n_0,\textrm{odd}}^{\textrm{str}}$ is non-self-associated and $w$ is even, then $\Phi_{\kappa,f}$ is non-self-associated. Moreover, let $\Phi_{\kappa,f}^+=\Phi_{\kappa,f}$ and $\Phi_{\kappa,f}^-=\varepsilon\Phi_{\kappa,f}$. Then $\sigma(\Phi_{\kappa,f}^+)=\Phi_{\kappa,f}^{\mu_{\kappa,f}}$ with
$$
\mu_{\kappa,f}=\left(\frac{N_\kappa^\eta}{p}\right)\cdot\left(\frac{-1}{p}\right)^{\lceil w/2\rceil}\left(\frac{2\eta}{p}\right)^w.
$$
\item If $\kappa\in\mathcal{P}_{n_0,\textrm{even}}^{\textrm{str}}$ and $w$ is even, then $\Phi_{\kappa,f}$ is self-associated. Moreover, let $\mathrm{Res}_{(\tilde{N}\cap\tilde{A}_n)/R}^{\tilde{N}/R}\Phi_{\kappa,f}=\Phi'^+_{\kappa,f}+\Phi'^-_{\kappa,f}$. Then $\sigma(\Phi'^+_{\kappa,f})=\Phi'^{\mu_{\kappa,f}}_{\kappa,f}$ with
$$
\mu_{\kappa,f}=\left(\frac{N_\kappa^\eta}{p}\right)\cdot\left(\frac{-1}{p}\right)^{w/2}\left(\frac{2\eta}{p}\right)^w.
$$
\item If $\kappa\in\mathcal{P}_{n_0,\textrm{odd}}^{\textrm{str}}$ and $w$ is odd, then $\Phi_{\kappa,f}$ is self-associated. Moreover, let $\mathrm{Res}_{(\tilde{N}\cap\tilde{A}_n)/R}^{\tilde{N}/R}\Phi_{\kappa,f}=\Phi'^+_{\kappa,f}+\Phi'^-_{\kappa,f}$. Then $\sigma(\Phi'^+_{\kappa,f})=\Phi'^{\mu_{\kappa,f}}_{\kappa,f}$ with
$$
\mu_{\kappa,f}=\left(\frac{N_\kappa^\eta}{p}\right)\cdot\left(\frac{-1}{p}\right)^{\lfloor w/2\rfloor}\left(\frac{2\eta}{p}\right)^w.
$$
\end{enumerate}
\end{corollary}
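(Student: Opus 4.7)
My plan is to reduce Corollary \ref{local} to Theorem \ref{symbol_of_weight} (Galois action on $\Psi_f$) together with Theorems \ref{symbol_of_irr_+}/\ref{symbol_of_irr_-} (Galois action on $\chi_\kappa$), combined via the two-factor Humphreys product structure from Theorem \ref{symbol_of_Humphreys_product}, and then transport the result through the Clifford induction defining $\Phi_{\kappa,f}$.

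First I would regard $\chi_\kappa*\Psi_f$ as a Humphreys product on $\tilde{S}_{n_0}^\eta*\tilde{N}_1(\phi_1)$. By Theorem \ref{symbol_of_weight}, $\Psi_f$ is non-self-associated if and only if $w$ is odd, and by definition $\chi_\kappa$ is non-self-associated if and only if $\kappa\in\mathcal{P}_{n_0,\textrm{odd}}^{\textrm{str}}$. Hence the number $s$ of non-self-associated factors is $1$ in case~1, $0$ in case~2, and $2$ in case~3. Theorem \ref{symbol_of_Humphreys_product} then immediately gives the asserted self-/non-self-associated behavior of $\chi_\kappa*\Psi_f$, and via the projection formula $\varepsilon\cdot\mathrm{Ind}(\chi)=\mathrm{Ind}(\varepsilon|_{\tilde{N}(\phi_1)/R}\cdot\chi)$ the same behavior transfers to $\Phi_{\kappa,f}$. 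The Humphreys product formula of Theorem \ref{symbol_of_Humphreys_product}, evaluated at a suitable product element $t_\kappa t_\Psi$, reads
\[
(\chi_\kappa*\Psi_f)(t_\kappa t_\Psi)\ \text{or}\ \delta_{\kappa,\Psi}(t_\kappa t_\Psi)=(2\mathrm{i})^{\lfloor s/2\rfloor}\,A(t_\kappa)\,B(t_\Psi),
\]
with $A\in\{\chi_\kappa,\delta_\kappa\}$ and $B\in\{\Psi_f,\delta_\Psi\}$ determined by the case.

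Applying $\sigma$ produces three contributions. The factor $(2\mathrm{i})^{\lfloor s/2\rfloor}$ picks up $\left(\frac{-1}{p}\right)^{\lfloor s/2\rfloor}$ because $\sigma(\mathrm{i})=\left(\frac{-1}{p}\right)\mathrm{i}$ by Remark \ref{remark:action of sigma}; the $A$-factor contributes $\mu_\kappa=\left(\frac{N_\kappa^\eta}{p}\right)$ by Theorems \ref{symbol_of_irr_+}/\ref{symbol_of_irr_-}; and the $B$-factor contributes $\mu_w=\left(\frac{-1}{p}\right)^{\lceil w/2\rceil}\left(\frac{2\eta}{p}\right)^w$ by Theorem \ref{symbol_of_weight}. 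The overall scalar $\left(\frac{-1}{p}\right)^{\lfloor s/2\rfloor}\mu_\kappa\mu_w$ encodes the $\sigma$-action on $\chi_\kappa*\Psi_f$, or on the two constituents of its restriction to $\tilde{A}_n$ in the self-associated cases. A direct parity check then verifies that this scalar coincides with $\mu_{\kappa,f}$ as stated in each of the three cases; the non-trivial instance is case~3, where $s=2$ contributes an extra $\left(\frac{-1}{p}\right)$ that converts $\left(\frac{-1}{p}\right)^{\lceil w/2\rceil}$ into $\left(\frac{-1}{p}\right)^{\lfloor w/2\rfloor}$ for $w$ odd.

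The passage from $\chi_\kappa*\Psi_f$ to $\Phi_{\kappa,f}=\mathrm{Ind}_{\tilde{N}(\phi_1)/R}^{\tilde{N}/R}(\chi_\kappa*\Psi_f)$ is automatic since $\sigma$ commutes with induction; in the self-associated cases Mackey's formula further identifies the two constituents of $\mathrm{Res}_{(\tilde{N}\cap\tilde{A}_n)/R}^{\tilde{N}/R}\Phi_{\kappa,f}$ with the inductions of the two constituents of $\mathrm{Res}(\chi_\kappa*\Psi_f)$ to $\tilde{N}(\phi_1)\cap\tilde{A}_n$. The main obstacle I anticipate is the parity bookkeeping in the middle step, together with verifying that the Clifford correspondence between characters of $\tilde{N}(\phi_1)/R$ and those of $\tilde{N}/R$ lying over $\phi_1$ behaves compatibly with restriction to $\tilde{A}_n$ in this twisted-central-product setting; both should follow from careful but routine applications of standard Mackey and Clifford decompositions.
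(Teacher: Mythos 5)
Your proposal is correct and follows essentially the same route as the paper: regard $\chi_\kappa*\Psi_f$ as a two-factor Humphreys product, read off the $\sigma$-action of each factor from Theorem~\ref{symbol_of_weight} and the results on $\chi_\kappa$ (the paper cites Theorem~\ref{global}, which agrees with Theorems~\ref{symbol_of_irr_+}/\ref{symbol_of_irr_-} for the $p$-defect-zero character $\chi_\kappa$), track the extra $\left(\frac{-1}{p}\right)^{\lfloor s/2\rfloor}$ factor coming from $(2\mathrm{i})^{\lfloor s/2\rfloor}$ in Theorem~\ref{symbol_of_Humphreys_product}, and transport through induction via Mackey's formula and the Clifford correspondence. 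Your explicit parity check, including that in case~3 the extra $\left(\frac{-1}{p}\right)$ from $s=2$ replaces $\lceil w/2\rceil$ by $\lfloor w/2\rfloor$ for odd $w$, is exactly the accounting the paper leaves implicit.
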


\begin{proof}
By Mackey's formula, we have
$$
\mathrm{Res}_{(\tilde{N}\cap\tilde{A}_n)/R}^{\tilde{N}/R} \mathrm{Ind}_{\tilde{N}(\phi_1)/R}^{\tilde{N}/R}(\chi_\kappa*\Psi_f)=
\mathrm{Ind}_{(\tilde{N}(\phi_1)\cap\tilde{A}_n)/R}^{(\tilde{N}\cap\tilde{A}_n)/R}
\mathrm{Res}_{(\tilde{N}(\phi_1)\cap\tilde{A}_n)/R}^{\tilde{N}(\phi_1)/R}(\chi_\kappa*\Psi_f).
$$
Note that $\phi_1$ is invariant under the action of $\sigma$. The results follow directly from Theorem~\ref{symbol_of_weight}, Theorem~\ref{global}, Theorem~\ref{symbol_of_Humphreys_product}, and the Clifford Theorem.
\end{proof}

\section{Proof of Theorem~\ref{main}}\label{section8}

In this section, we complete the proof of Theorem~\ref{main}. It suffices to consider a positive defect block $B_\kappa$ of $\tilde{S}_n^\eta$ and the block $B'_\kappa$ of $\tilde{A}_n$ covered by $B_\kappa$. Recall that
$$
\Lambda_\kappa=\left\{\lambda=(n_1>n_2>\cdots>n_l>0)\ \middle|\ \substack{\mbox{ $\kappa$ is the $p$-bar-core of $\lambda$},\\ \mbox{$p\nmid n_j$ for all $1\leq j\leq l$}  }\right\}.
$$

Fix an order of $p$-defect-zero irreducible characters in $\mathcal{C}_d$. A map
$$
f\colon\bigcup_{d\geq1}\mathcal{C}_d\to\{p\mbox{-cores}\}
$$
defines a $p$-quotient with empty runner $0$. Hence there is a bijection
\begin{eqnarray*}
&\left\{\lambda\in\Lambda_\kappa\mid\lambda\mbox{ is a bar partition of }n\right\}&\\
&\updownarrow&\\
&\left\{f\colon\bigcup_{d\geq1}\mathcal{C}_d\to\{p\mbox{-cores}\}\ \middle|\ n=|\kappa|+\sum_{d\geq1}\sum_{\psi\in\mathcal{C}_d}|f(\psi)|p^d\right\}&
\end{eqnarray*}
using \cite[Proposition~4.2]{Ols93}.

By Theorem~\ref{symbol_of_IBr}, Theorem~\ref{global} and Corollary~\ref{local}, we have
$$
\mu_{\kappa,f}=\mu_\lambda=\left(\frac{N_\lambda^\eta}{p}\right)=\left\{\begin{array}{ll}
\left(\frac{N_\kappa^\eta}{p}\right)\cdot\left(\frac{-1}{p}\right)^{\lceil w/2\rceil}\left(\frac{2\eta}{p}\right)^w,&\kappa\in\mathcal{P}_{n_0,\textrm{even}}^{\textrm{str}},\\
\left(\frac{N_\kappa^\eta}{p}\right)\cdot\left(\frac{-1}{p}\right)^{\lfloor w/2\rfloor}\left(\frac{2\eta}{p}\right)^w,&\kappa\in\mathcal{P}_{n_0,\textrm{odd}}^{\textrm{str}}
\end{array}\right.
$$
with $n=|\kappa|+pw$, and the following results hold.
\begin{enumerate}
\item There is a bijection between irreducible $p$-Brauer characters in $B_\kappa$ and conjugacy classes of weights in $B_\kappa$, which is invariant under the action of $\varepsilon$ and $\sigma$.
\item There is a bijection between irreducible $p$-Brauer characters in $B'_\kappa$ and conjugacy classes of weights in $B'_\kappa$, which is invariant under the action of $\tilde{S}_n^\eta/\tilde{A}_n$ and $\sigma$.
\end{enumerate}
Hence we have proved Theorem~\ref{main} for $n\neq6$. The case $n=6$ can be checked individually.

\section*{Acknowledgement}

We wish to thank Pham Huu Tiep for helpful and valuable discussions. The first and the third authors are supported by National Key R \& D Program of China Grant (No. 2020YFE0204200) and NSFC Grants (No. 12431001 and No. 12350710787). The second author is supported by Fundamental Research Funds for the Central Universities (CCNU24XJ028) and China Postdoctoral Science Foundation (2024M761095).

\end{document}